\documentclass[a4paper,11pt]{article}
\usepackage{amsmath}
\usepackage{amssymb}
\usepackage{amsthm}
\usepackage{MnSymbol}
\usepackage{paralist} 
\usepackage{enumitem} 
\usepackage{hyperref}
\usepackage{url}
\usepackage{bm} 
\usepackage{color}
\usepackage[margin=30mm]{geometry}

\usepackage{pgfplots}
\usepackage{xcolor}
\usepackage{tikz}
\usetikzlibrary{plotmarks} 
 \pgfplotsset{compat=1.14}
 
 \usepgfplotslibrary{external}
\tikzexternalize[prefix=images/TikZ_converted/]  
      \tikzexternaldisable
 
 


\newcommand{\N}{\mathbb{N}}
\newcommand{\R}{\mathbb{R}}

\renewcommand{\div}{ \, \mathrm{div}  }

\newcommand{\dd}{\, d}
\newcommand{\dx}{\, dx}
\newcommand{\dt}{\, dt}
\newcommand{\dz}{\, dz}
\newcommand{\pd}{\partial}
\newcommand{\pdnu}{\pd_{\bm{\nu}}}
\newcommand{\mean}[1]{\overline{#1}}

\newcommand{\PP}{\mathcal{P}}
\renewcommand{\AA}{\mathcal{A}}
\newcommand{\CC}{\mathcal{C}}

\newcommand{\abs}[1]{\left| #1 \right|}
\newcommand{\norm}[1]{\| #1 \|}

\newcommand{\inner}[2]{\langle #1 , #2 \rangle}

\newcommand{\eps}{\varepsilon}
\newcommand{\Laplace}{\Delta}
 
\theoremstyle{plain}
\newtheorem{thm}{Theorem}[section]

\theoremstyle{plain}
\newtheorem{remark}{Remark}[section]

\newtheorem{assump}{Assumption}[section]
\numberwithin{equation}{section}


\begin{document}
\title{Parameter identification via optimal control for a
Cahn--Hilliard-chemotaxis system with a variable mobility\footnotemark[3]}

\author{Christian Kahle \footnotemark[1] \and Kei Fong Lam \footnotemark[2]}

\date{\today}

\maketitle

\renewcommand{\thefootnote}{\fnsymbol{footnote}}
\footnotetext[1]{
Zentrum Mathematik, 
Technische Universit\"{a}t M\"{u}nchen, 
85748 Garching bei M\"{u}nchen, 
Germany 
(\tt{Christian.Kahle@ma.tum.de})
}
\footnotetext[2]{
Fakult\"at f\"ur Mathematik, 
Universit\"at Regensburg, 
93040 Regensburg, 
Germany 
(\tt{Kei-Fong.Lam@mathematik.uni-regensburg.de})
}
\footnotetext[3]
{
The first author gratefully acknowledges the support by the Deutsche
Forschungsgemeinschaft (DFG) through the International Research Training Group
IGDK 1754 ``Optimization and Numerical Analysis for Partial Differential Equations with Nonsmooth Structures".
}

\maketitle

\begin{abstract}
We consider the inverse problem of identifying parameters in a variant of the diffuse interface model for tumour growth
model proposed by Garcke, Lam, Sitka and Styles (Math. Models Methods Appl. Sci. 2016).  The model contains three constant parameters; namely the tumour growth rate, the chemotaxis parameter and the nutrient  consumption rate.  We study the inverse problem from the viewpoint of PDE-constrained optimal control theory and establish first order optimality conditions.  A chief difficulty in the theoretical analysis lies in proving high order continuous dependence of the strong solutions on the parameters, in order to show the solution map is continuously Fr\'{e}chet differentiable when the model has a variable mobility.  Due to technical restrictions, our results hold only in two dimensions for sufficiently smooth domains.  Analogous results for polygonal domains are also shown for the case of constant mobilities.  Finally, we propose a discrete scheme for the numerical simulation of the tumour model and solve the inverse problem using a trust-region Gauss--Newton approach.
\end{abstract}
  
\noindent \textbf{Key words. } Cahn--Hilliard equation, chemotaxis, parameter identification, optimal control, variable mobility.\\

\noindent \textbf{AMS subject classification. }  
35Q92, 35R30, 49J20, 49J50, 65M32, 92B05, 92C17

\section{Introduction}
Our main subject of study is the following Cahn--Hilliard system that has been proposed 
in \cite{GLSS} for the modelling of tumour growth: 
 Let $\Omega \subset \R^{2}$ be a bounded domain with boundary $\pd \Omega$, $T > 0$ 
 denotes a fixed terminal time.  
 Let $Q := \Omega \times (0,T)$ and $\Gamma := \pd \Omega \times (0,T)$, 
 then we study the following system of equations:
\begin{subequations}
\label{Intro:CH}
\begin{alignat}{3}
\varphi_{t} & = \div (m(\varphi) \nabla (\mu  - \chi \sigma) ) + \PP f(\varphi) g(\sigma) && \text{ in } Q, \label{CH:1} \\
\mu & = \beta \eps^{-1} \Psi'(\varphi) - \beta \eps \Laplace \varphi && \text{ in } Q, \label{CH:2} \\
\sigma_{t} & = \Laplace \sigma - \CC h(\varphi) \sigma && \text{ in } Q, \label{CH:3} \\
0 & = \pdnu \varphi = \pdnu \mu = \pdnu \sigma && \text{ on } \Gamma, \\
\varphi(0) & = \varphi_{0}, \quad \sigma(0) = \sigma_{0} && \text{ in } \Omega,
\end{alignat}
\end{subequations}
where $\pdnu f := \nabla f \cdot \bm{\nu}$ denotes the normal derivative 
of $f$ on the boundary $\pd \Omega$ with unit normal $\bm{\nu}$.  
In the above diffuse interface model for a two-component mixture of tumour cells 
(given by the region $\{\varphi = 1\}$) and healthy host cells 
(given by the region $\{\varphi = -1\}$) that are separated by thin 
transition layers $\{\abs{\varphi} < 1\}$, a nutrient is present, 
whose concentration we denote by the variable $\sigma$, while $\mu$ is a chemical potential for the variable $\varphi$.  The parameter $\beta$ is a positive parameter associated to the surface tension, $\eps$ is a positive parameter measuring the
interfacial thickness, and $m(\varphi) \geq 0$ is a mobility function for $\varphi$.  
The function $\Psi'$ is the derivative of a potential function $\Psi$ which has two equal minima at $\pm 1$.  While the model \eqref{Intro:CH} itself is valid also for three spatial dimensions, the analysis that
we carry out throughout the paper is restricted to two dimensions since several required estimates do not hold in three dimensions.

There are several mechanisms that are driving the dynamics of the tumour and the nutrient.  
In \eqref{CH:1}, the term $\PP f(\varphi) g(\sigma)$ models the growth of the tumour by nutrient consumption, 
where $\PP \geq 0$ can be seen as a tumour proliferation rate, $f$ is a smooth indicator function for the growing front, i.e., the interface between healthy cells and tumour cells, and $g$ models how nutrient is used for proliferation.
In \eqref{CH:3} the term $\CC
h(\varphi) \sigma$ models the consumption of the nutrient only by the tumour cells at a
rate $\CC \geq 0$. Here $h(\varphi)$ is a smooth indicator function for the tumour region.  We refer the reader to Section~\ref{sec:Numerics} for a specific choice of $f$, $g$, and $h$ that are used for numerical simulations.  In \eqref{CH:1} the term $- \chi \div (m(\varphi) \nabla \sigma)$ contributes to the chemotaxis effect \cite{GLNeu, GLSS}, 
that is, tumour cells tend to move towards regions of high nutrient concentration.  

Numerical simulations of \eqref{Intro:CH} and its variants in \cite{CLLW,CLNie,GLNS,GLSS} illustrated that, 
in the presence of the chemotaxis mechanisms, 
the tumour display morphological instabilities akin to the invasive branched tubular structures observed in \cite[Fig. 6C]{Pennacchietti}.  
The morphological instabilities can be explained with the help of a linear stability analysis performed in 
\cite[\S 4]{GLSS}, which shows that in the presence of the chemotaxis mechanisms, 
small initial perturbations may grow in size. 
 While there has been a great progression in recent years in identifying specific genetic
  mutations that has dramatically improved cancer prognosis and treatment, 
  there are still many chemical and biological processes occurring at 
multiple time and spatial scales that are poorly understood.  
Although the multitude of mechanisms described above, namely nutrient consumption and diffusion, 
tumour proliferation and chemotaxis, 
are not derived from first principles but from phenomenological continuum modelling, 
we obtain a tractable description of the growth dynamics in which 
the model can be further analysed and simulated.

Let us mention that the above model \eqref{Intro:CH} differs slightly from the model derived in \cite{GLSS} and studied in \cite{GLDiri,GLNeu}.  
Chiefly, we  neglect the active transport mechanism and the effects of apoptosis.  
The former manifests itself as an additional cross-diffusion-type term $-\chi \Laplace \varphi$
in the right-hand side of \eqref{CH:3}, and the latter as an additional term $-\AA f(\varphi)$ 
on the right-hand side of \eqref{CH:1}, where $\AA$ represents a constant apoptosis rate.  
These are neglected due to the following reasons:
\begin{itemize}
\item A characteristic of malign tumour cells is the genetic feature in which certain regulatory proteins have been switched off after mutation and the cells do not undergo apoptosis.  Thus, often in simulations the value of $\AA$ is small or zero.
\item A linear stability analysis performed in \cite{GLSS} of radially symmetric solutions to the sharp interface models  shows that the chemotaxis mechanism has a more significant effect than the active transport mechanism at developing morphological instabilities.
\item Numerically, we have observed at the active transport mechanism can cause negative values of $\sigma$ 
to appear if $\chi$ is chosen too large. This can be explained from the limiting sharp interface model obtained from sending $\eps \to 0$ in \eqref{Intro:CH}.  
In this singular limit, the domain $\Omega$ is partitioned into 
two subdomains $\Omega_{T} = \{ \varphi \approx 1\}$ and $\Omega_{H} = \{ \varphi \approx -1 \}$, which are separated by a time-dependent hypersurface $\Sigma = \{ \varphi \approx 0 \}$.  It turns out that when the active transport is present, the nutrient $\sigma$ experiences a jump across $\Sigma$ with values equal to $2 \chi$ (see \cite[(3.34)]{GLSS}), i.e., the difference between the traces of $\sigma$ from $\Omega_{T}$ and $\Omega_{H}$ at $\Sigma$ is $2 \chi$.  Hence, for inappropriate values of $\chi$, the values of $\sigma$ in $\Omega_{H}$ may become negative, and this is not desirable as $\sigma$ represents a concentration and thus should be non-negative. 

\end{itemize}

The primary goal of the present work is to develop a methodology that allows practitioners to compare 
model simulations with experimental data.  More precisely, given a set of data, identify the optimal parameter 
values for $\PP$, $\CC$ and $\chi$ so that the resulting model predictions and the data are close in some sense.  
Our approach makes use of standard techniques from parabolic optimal control theory \cite{HPUU, Troltzsch}.  
This approach interprets the inverse problem in the sense of an optimal control problem, 
governed by a system of partial differential equations, namely the tumour model, and the parameters are interpreted as controls.

An alternative approach for parameter identification is the Bayesian inversion
 (also known as Bayesian model calibration) \cite{Allmaras,Madzvamuse,Stuart_Bayes},
  which is a probabilistic framework that incorporates the uncertainties associated 
  to measurements and the relative probabilities of different optimal parameters given the data.  
  In contrast to the optimal control approach, where the output is, in some sense,
the best among all other possible parameters at matching the data, 
the Bayesian approach outputs a posterior distribution that combines 
a priori information and the data.  While certain aspects of the Bayesian approach, 
such as quantification of uncertainty, observing correlations between parameters 
via joint probability distributions and requirement of only weaker notions of 
well-posedness for the forward model, are rather appealing, one drawback is the 
significant increase in computational cost associated to the approximation of
 the posterior distribution, which involves many simulations of the forward model. 
  Therefore, in the current work we restrict ourselves to the optimal control approach 
  and leave the Bayesian approach for future research.  
  We refer the interested reader to the works of \cite{Collis,Hawkins,Lima2, Lima} 
  for the application of the Bayesian framework to tumour models.

The main analytical novelty of our present work is as follows: 
 We include in our analysis a variable mobility for the Cahn--Hilliard equation \eqref{CH:1}. 
While the strong well-posedness to \eqref{Intro:CH}
in two dimensions can be deduced from the results of \cite{LamWu}, 
new difficulties are encountered in order to show the Fr\'{e}chet differentiability of the
 control-to-state mapping in the presence 
of a variable mobility.  
To handle the additional terms arising from the variable mobility, 
we prove high order continuous dependence (see Theorem \ref{thm:Ctsdep} below) 
on the parameters $\PP$, $\chi$ and $\CC$, namely  
$L^{\infty}(0,T;H^{2}(\Omega)) \cap L^{2}(0,T;H^{4}(\Omega))$ for $\varphi$, 
$L^{\infty}(0,T;L^{2}(\Omega)) \cap L^{2}(0,T;H^{2}(\Omega))$ for $\mu$, 
and $L^{\infty}(0,T;H^{1}(\Omega)) \cap L^{2}(0,T;H^{2}(\Omega))$ for $\sigma$.  

\paragraph{Notation.}
We use the notation $L^{p} := L^{p}(\Omega)$ and $W^{k,p} := W^{k,p}(\Omega)$ 
for any $p \in [1,\infty]$, $k > 0$ to denote the standard Lebesgue spaces and Sobolev 
spaces equipped with the norms $\norm{\cdot}_{L^{p}}$ and $\norm{\cdot}_{W^{k,p}}$.  
In the case $p = 2$ we use notation $\norm{\cdot}_{H^{k}} := \norm{\cdot}_{W^{k,2}}$.  
The space $H^{2}_{N}(\Omega)$ denotes the set $\{f \in H^{2}(\Omega) : \pdnu f  = 0$ on $\pd \Omega \}$.  
The space-time cylinder $\Omega \times (0,T)$ is denoted as $Q$, and for any $t \in (0,T)$,
we denote $\Omega \times (0,t)$ as $Q_{t}$. 
For any Banach space $X$, its dual is denoted as $X'$, 
and for any $p \in [1,\infty]$, the norm of the Bochner space $L^{p}(0,T;X)$ will sometimes
be abbreviated as $\norm{\cdot}_{L^{p}(X)}$.  
We will often use the isometric isomorphism $L^{p}(Q_{t}) \cong L^{p}(0,t;L^{p})$ 
for $t \in (0,T]$ and any $p \in [1,\infty)$.  
Furthermore, we use the notation $L^{p}(Q) := L^{p}(Q_{T})$.

\paragraph{Preliminaries.}
We state some inequalities that will be useful in the subsequent analysis:
\begin{itemize}
\item \emph{The Gagliardo--Nirenberg interpolation inequality} for $d = 2$ in bounded domains $\Omega$ with Lipschitz boundary:  For $f \in H^{1}(\Omega)$, there exists a positive constant $C$ depending only on $\Omega$ such that
\begin{align}\label{GN:alt}
\norm{f}_{L^{4}} \leq C \left ( \norm{f}_{L^{2}}^{\frac{1}{2}} \norm{\nabla f}_{L^{2}}^{\frac{1}{2}} + \norm{f}_{L^{2}}\right ).
\end{align}
\item \emph{The Br\'{e}zis--Gallouet interpolation inequality} for $d=2$ in bounded domains $\Omega$ with smooth boundary \cite{BG} or with the cone property \cite{Engler}:  There exists a positive constant $C$ depending only on $\Omega$ such that
\begin{align}\label{Bre}
\norm{g}_{L^{\infty}} \leq C \norm{g}_{H^{1}}\sqrt{\ln (1+ \norm{g}_{H^{2}})} + C \norm{g}_{H^{1}}, \quad \forall \, g \in H^{2}(\Omega).
\end{align}
\item Elliptic estimates: Let $\Omega$ be a convex bounded domain or a bounded domain with Lipschitz boundary.  If $f \in H^{2}(\Omega)$ satisfies $\pdnu f = 0$ on $\pd \Omega$, then there exists a positive constant $C$ depending only on $\Omega$ such that
\begin{align}\label{H2EllEst}
\norm{f}_{H^{2}} \leq C \left ( \norm{\Laplace f}_{L^{2}} + \norm{f}_{L^{2}} \right ).
\end{align}
Let $\Omega$ be a bounded domain with $C^{4}$-boundary.  If $f \in H^{4}(\Omega)$ satisfies $\pdnu f = \pdnu (\Laplace f) = 0$ on $\pd \Omega$, then there exists a positive constant $C$ depending only on $\Omega$ such that
\begin{align}\label{H4EllEst}
\norm{f}_{H^{4}} \leq C \left ( \norm{\Laplace^{2} f}_{L^{2}} + \norm{f}_{L^{2}} \right ).
\end{align}
\end{itemize}


\paragraph{Plan of paper.}  
In Sec.~\ref{sec:State} the strong well-posedness of solutions to \eqref{Intro:CH} is discussed, 
and high order continuous dependence on the parameters $\PP$, $\chi$ and $\CC$ are shown.  
In Sec.~\ref{sec:Min}, the optimization problem is stated and the existence of minimizers is shown via the direct method.  
In Sec.~\ref{sec:Opt}, the linearized state system, the Fr\'{e}chet differentiability of the control-to-state mapping and the adjoint system are studied.  Then, we derive the first order necessary optimality conditions for minimizers.  The results in Secs.~\ref{sec:State}--\ref{sec:Opt} require rather high regularities for the boundary of the domain $\Omega$ and in Sec.~\ref{sec:ConstMob} we show how these requirements can be lowered.  In Sec.~\ref{sec:discreteScheme} we briefly propose a finite element approximation for the numerical realization of the parameter identification problem, that we use in
Sec.~\ref{sec:Numerics} to show some numerical examples.


\section{Strong well-posedness of the state equations}
\label{sec:State}
In view of the analysis performed in the literature \cite{Colli:tumour, GLR:Opt}, 
strong solutions are needed to show Fr\'{e}chet differentiability of the solution 
mapping of the tumour model.  Henceforth, in this section,
 we establish the existence of strong solutions to \eqref{Intro:CH}, 
as well as continuous dependence on the parameters $\PP, \CC, \chi$ 
and initial data $\varphi_{0}, \sigma_{0}$.  

Let us comment that, although our results are restricted to two dimensions, they slightly generalize the results of previous works \cite{Colli:tumour, GLR:Opt} by considering a variable mobility $m(\varphi)$ and non-zero chemotaxis effects $\chi > 0$.

\begin{assump}\label{assump:Wellposed}
\
\begin{enumerate}[label=$(\mathrm{A \arabic*})$, ref = $\mathrm{A \arabic*}$]
\item \label{assump:Initial} $\Omega \subset \R^{2}$ is a bounded domain with $C^{4}$-boundary $\pd \Omega$.  The initial conditions $\varphi_{0}, \sigma_{0}$ belong to the space $H^{3}(\Omega) \cap H^{2}_{N}(\Omega)$.
\item \label{assump:h} The functions $m, h, f, g \in C^{2}(\R)$ are bounded with bounded derivatives and there exists positive constants $n_{0}$ and $n_{1}$, such that $0 \leq h(s)$ and $n_{0} \leq m(s) \leq n_{1}$ for all $s \in \R$.
\item \label{assump:para} The parameters $\PP$, $\chi$ and $\CC$ are non-negative and constant in time and space, and the constants $\beta$ and $\eps$ are positive and fixed.
\item \label{assump:Pot} The potential $\Psi \in C^{3}(\R)$ is non-negative and there exist positive constants $R_{1}$, $R_{2}$, $R_{3}$, $R_{4}$, $R_{5}$ such that for all $s, t \in \R$,
\begin{align}
\notag \Psi(s) \geq R_{1} \abs{s}^{2} - R_{2}, \quad \abs{\Psi'(s)} & \leq R_{3} \left ( 1 + \Psi(s) \right ), \quad \abs{\Psi'''(s)} \leq R_{4} \left ( 1 + \abs{s}^{q-1} \right ), \\
\notag \abs{\Psi'(s) - \Psi'(t)} & \leq R_{5} \left ( 1 + \abs{s}^{r} + \abs{t}^{r} \right ) \abs{s - t}, \\
\notag \abs{\Psi''(s) - \Psi''(t)} & \leq R_{5} \left ( 1 + \abs{s}^{r-1} + \abs{t}^{r-1} \right ) \abs{s - t}, \\
\label{ctsdep:Psi'''} \abs{\Psi'''(s) - \Psi'''(t)} & \leq R_{5} \left ( 1 + \abs{s}^{r-2} + \abs{t}^{r-2} \right ) \abs{s - t}
\end{align}
for some exponents $q \in [1,\infty)$ and $r \in [2,\infty)$.
\end{enumerate}
\end{assump}

\begin{remark}
Let us point out that the $C^{3}$-assumption for $\Psi$ and the $C^{4}$-assumption on $\pd \Omega$ are required to show both existence and continuous dependence of strong solutions to the Cahn--Hilliard system with a variable mobility $m(\varphi)$.  In Sec.~\ref{sec:ConstMob} for a constant mobility $m = 1$, we can relax the requirements to $\Psi \in C^{2}(\R)$ satisfying the above properties aside from \eqref{ctsdep:Psi'''} and $\pd \Omega$ is a convex polygonal boundary.
\end{remark}

\begin{thm}[Strong existence]\label{thm:Exist}
Under Assumption \ref{assump:Wellposed}, there exists a triplet of functions $(\varphi, \mu, \sigma)$ with 
\begin{align*}
\varphi & \in L^{\infty}(0,T;H^{3}(\Omega) \cap H^{2}_{N}(\Omega)) \cap L^{2}(0,T;H^{4}(\Omega)) \cap H^{1}(0,T;L^{2}(\Omega)), \\
\mu & \in L^{\infty}(0,T;H^{1}(\Omega)) \cap L^{2}(0,T;H^{3}(\Omega) \cap H^{2}_{N}(\Omega)) \\
\sigma & \in L^{\infty}(0,T;H^{3}(\Omega) \cap H^{2}_{N}(\Omega)) \cap H^{1}(0,T;H^{2}(\Omega)),
\end{align*}
satisfying $\varphi(0) = \varphi_{0}$, $\sigma(0) = \sigma_{0}$ in $L^{2}(\Omega)$ and \eqref{CH:1}-\eqref{CH:3} a.e. in $Q$ with the property that
\begin{align}\label{Holder}
\varphi, \sigma \in C^{0}([0,T];C^{1,\delta}(\overline{\Omega})) \text{ for any } 0 < \delta < 1.
\end{align}  
Furthermore, there exists a positive constant $C$ not depending on $(\varphi, \mu, \sigma)$ such that
\begin{align}\label{Bounds}
\norm{\varphi}_{L^{\infty}(H^{3}) \cap L^{2}(H^{4}) \cap H^{1}(L^{2})} + \norm{\mu}_{L^{\infty}(H^{1}) \cap L^{2}(H^{3})} + \norm{\sigma}_{L^{\infty}(H^{3}) \cap H^{1}(H^{2})} \leq C.
\end{align}
\end{thm}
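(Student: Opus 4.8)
The plan is to construct solutions by a Galerkin approximation in the eigenfunctions of the Neumann Laplacian on $\Omega$, to derive a hierarchy of approximation-index-independent a priori estimates, and then to pass to the limit by compactness. Since the quantitative bound \eqref{Bounds} is the heart of the statement, I would establish every estimate directly at the discrete level, tracking dependence only on the data and the fixed constants $\beta,\eps,n_0,n_1,R_i$. The starting point is the dissipative structure: testing \eqref{CH:1} with $\mu-\chi\sigma$, \eqref{CH:2} with $\varphi_t$, and \eqref{CH:3} with $\sigma$, and adding, produces
\begin{align*}
\frac{d}{dt}\Big( \int_\Omega \big( \tfrac{\beta \eps}{2} \abs{\nabla \varphi}^2 + \tfrac{\beta}{\eps}\Psi(\varphi) \big) \dx + \tfrac12 \norm{\sigma}_{L^2}^2 \Big) + \int_\Omega m(\varphi)\abs{\nabla(\mu - \chi\sigma)}^2 \dx + \norm{\nabla\sigma}_{L^2}^2 \leq (\text{lower-order terms}),
\end{align*}
where the coupling term $\chi\int_\Omega \varphi_t \sigma \dx$ and the growth contribution $\PP\int_\Omega f(\varphi)g(\sigma)(\mu-\chi\sigma)\dx$ are absorbed using $h\geq 0$, the boundedness of $f,g$, and the coercivity $\Psi(s)\geq R_1\abs{s}^2-R_2$. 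Combined with $m\geq n_0>0$ this yields the first level of control: $\varphi\in L^\infty(0,T;H^1)$, $\sigma\in L^\infty(0,T;L^2)\cap L^2(0,T;H^1)$, and $\nabla\mu\in L^2(0,T;L^2)$; controlling the spatial mean of $\mu$ by integrating \eqref{CH:2} then gives $\mu\in L^2(0,T;H^1)$ via Poincar\'e.

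Next I would bootstrap. The nutrient equation \eqref{CH:3} is linear in $\sigma$ with the bounded coefficient $\CC h(\varphi)$, so standard parabolic regularity upgrades $\sigma$ successively to $L^\infty(0,T;H^1)\cap L^2(0,T;H^2)$, then $H^1(0,T;L^2)$, and finally, as the regularity of $\varphi$ improves, to $L^\infty(0,T;H^3)\cap H^1(0,T;H^2)$; the Neumann initial datum $\sigma_0\in H^3\cap H^2_N$ is exactly what is needed to initiate this chain. For $\varphi$, inserting the $L^2(0,T;H^1)$ bound on $\mu$ and the polynomial growth of $\Psi'$ into \eqref{CH:2} and applying the elliptic estimate \eqref{H2EllEst} gives $\varphi\in L^2(0,T;H^2)$; the two-dimensional embedding $H^1\hookrightarrow L^p$ for every $p<\infty$, together with \eqref{GN:alt}, controls the Nemytskii terms $\Psi'(\varphi)$, $f(\varphi)$, $m(\varphi)$ throughout.

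The genuinely hard part is the high-order estimate $\varphi\in L^\infty(0,T;H^3)\cap L^2(0,T;H^4)$, which is where the variable mobility obstructs the analysis; the matching bounds for $\mu$ are then recovered from the algebraic relation \eqref{CH:2}. Expressing $\Laplace^2\varphi$ through $\Laplace\mu$ and $\Laplace\Psi'(\varphi)$ and invoking the fourth-order elliptic estimate \eqref{H4EllEst} reduces the $H^4$ control of $\varphi$ to an $L^2$ control of $\Laplace\mu$. To obtain the latter I would differentiate the structure in time and test with $\mu_t$ (equivalently, estimate $\tfrac{d}{dt}\norm{\nabla\mu}_{L^2}^2$), so that the dissipation from $\div(m(\varphi)\nabla\mu)$ furnishes a coercive term while the mobility generates commutators such as $m'(\varphi)\nabla\varphi\cdot\nabla\mu$ and $m'(\varphi)\varphi_t\,\Laplace\mu$. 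These products are exactly the terms that do not close by \eqref{GN:alt} alone: bounding $\norm{\nabla\varphi}_{L^\infty}$ or $\norm{\Laplace\varphi}_{L^\infty}$ against a top-order norm would destroy the estimate. The remedy, and the main obstacle to handle carefully, is the Br\'ezis--Gallouet inequality \eqref{Bre}, which controls the $L^\infty$ norm by the $H^1$ norm times only $\sqrt{\ln(1+\norm{\cdot}_{H^2})}$; feeding this into the differential inequality produces a bound of the form $y'\leq C\,y\ln(1+y)$, whose solutions do not blow up in finite time by an Osgood--Gronwall argument. This logarithmic gain is precisely what fails in three dimensions and forces the restriction to $d=2$ and to a $C^4$ boundary (the latter being required by \eqref{H4EllEst}).

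Finally, with all index-independent bounds in hand, I would pass to the limit: weak-$*$ compactness in the Bochner spaces listed in the theorem, the Aubin--Lions--Simon lemma for strong convergence of $\varphi$ and $\sigma$ (whence convergence of $m(\varphi)$, $\Psi'(\varphi)$, $f(\varphi)$, $g(\sigma)$, $h(\varphi)$ by continuity and dominated convergence), and identification of the limit as a strong solution of \eqref{CH:1}--\eqref{CH:3} a.e.\ in $Q$; reading off $\varphi_t$ from \eqref{CH:1} gives the remaining $H^1(0,T;L^2)$ regularity. Attainment of the initial data follows from $\varphi,\sigma\in C^0([0,T];L^2)$, implied by the regularity already obtained, while \eqref{Holder} comes from the two-dimensional embedding $H^3(\Omega)\hookrightarrow C^{1,\delta}(\overline{\Omega})$ for $\varphi$ and $\sigma$ together with the time continuity inherited from the $H^1$-in-time bounds. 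The estimate \eqref{Bounds} is then inherited by weak lower semicontinuity of the norms.
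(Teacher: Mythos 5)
Your strategy is correct and, in substance, it reconstructs from scratch the argument that the paper itself mostly outsources: the paper's proof of Theorem~\ref{thm:Exist} consists of citing \cite[Theorem 2]{LamWu} (specialized to $\bm{v}=\bm{0}$, $n(\varphi)=1$) for all the stated regularities of $\varphi$ and $\mu$ and for $\sigma \in L^{\infty}(0,T;H^{1})\cap L^{2}(0,T;H^{2}_{N})\cap H^{1}(0,T;L^{2})$, and then proves in detail only the genuinely new estimates $\sigma\in L^{\infty}(0,T;H^{3}\cap H^{2}_{N})$ and $\sigma_{t}\in L^{2}(0,T;H^{2})$ --- precisely the step you compress into ``standard parabolic regularity''. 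Concretely, the paper tests \eqref{CH:3} with $\Laplace\sigma_{t}$, then differentiates \eqref{CH:3} in time and tests with $\sigma_{t}$ and with $\Laplace\sigma_{t}$, and finishes with elliptic regularity; your observation that $\sigma_{0}\in H^{3}\cap H^{2}_{N}$ is what launches this chain (via $\sigma_{t}(0)=\Laplace\sigma_{0}-\CC h(\varphi_{0})\sigma_{0}\in H^{1}$) matches the paper exactly. Your Galerkin/energy/Br\'{e}zis--Gallouet scaffolding, including the logarithmic Gronwall closure $y'\leq Cy\ln(1+y)$ and the explanation of why $d=2$ and the $C^{4}$ boundary enter, is the correct machinery underlying the cited result, so your route buys self-containedness at the price of length, while the paper buys brevity by leaning on external work.

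Two steps in your sketch would need repair if written out. First, in the basic energy estimate the leftover coupling term $\chi\int_{\Omega}\varphi_{t}\sigma\dx$ is not absorbed merely by $h\geq 0$, boundedness of $f,g$ and coercivity of $\Psi$: one must either substitute \eqref{CH:1} for $\varphi_{t}$ and absorb the resulting term $\chi\int_{\Omega} m(\varphi)\nabla(\mu-\chi\sigma)\cdot\nabla\sigma\dx$ into the dissipation by testing \eqref{CH:3} with $D\sigma$ for a sufficiently large constant $D$ (this is exactly what the paper does in the constant-mobility proof of Theorem~\ref{thm:convex:dom}, choosing $D>\chi^{2}$), or work with the modified energy containing the cross term $-\chi\int_{\Omega}\varphi\sigma\dx$. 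Second, the bound $\mu\in L^{2}(0,T;H^{3})$ cannot be ``recovered from the algebraic relation \eqref{CH:2}'' alone: three spatial derivatives of $\Psi'(\varphi)$ involve $\Psi''''$, which is unavailable under the $C^{3}$ assumption \eqref{assump:Pot}, so this route caps at $\mu\in L^{2}(0,T;H^{2})$. The $H^{3}$ bound has to come instead from elliptic theory applied to \eqref{CH:1}, viewed as an equation for $\mu-\chi\sigma$ with coefficient $m(\varphi)$, once sufficient control of $\varphi_{t}$ is in hand, which is how the cited work proceeds. Both issues are standard and fixable; neither invalidates your plan.
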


\begin{remark}
Let us mention that due to the compact embedding $H^{3}(\Omega) \subset \subset C^{1,\delta}(\overline{\Omega})$ for any $0 < \delta < 1$, and standard compactness results for Bochner spaces we have the compact embedding
\begin{align*}
L^{\infty}(0,T;H^{3}(\Omega)) \cap H^{1}(0,T;L^{2}(\Omega)) \subset \subset C^{0}([0,T];C^{1,\delta}(\overline{\Omega})).
\end{align*}
Furthermore, as the initial conditions $\varphi_{0}, \sigma_{0}$ belong to $H^{3}(\Omega) \subset C^{1,\delta}(\overline{\Omega})$, the assertion \eqref{Holder} makes sense.
\end{remark}

\begin{proof}
The proof for the regularities for $\varphi$ and $\mu$, and for 
\begin{align*}
\sigma \in L^{\infty}(0,T;H^{1}(\Omega)) \cap L^{2}(0,T;H^{2}_{N}(\Omega)) \cap H^{1}(0,T;L^{2}(\Omega))
\end{align*}
can be adapted from the proof of  \cite[Theorem 2]{LamWu} 
by setting $\bm{v} =
\bm{0}$ and $n(\varphi) = 1$, see also \cite[Remark 3.3]{LamWu}.  
Thus, we omit the details and focus on showing 
the new estimates $\sigma \in L^{\infty}(0,T;H^{3} \cap H^{2}_{N})$ 
and $\sigma_{t} \in L^{2}(0,T;H^{2})$.

Testing \eqref{CH:3} with $\Laplace \sigma_{t}$, integrating in time and using that $\sigma_{0} \in H^{2}(\Omega)$ yields
\begin{align*}
& \norm{\Laplace \sigma}_{L^{\infty}(0,T;L^{2})} + \norm{\nabla \sigma_{t}}_{L^{2}(Q)} \\
& \quad \leq C \left ( \norm{\sigma}_{L^{2}(0,T;L^{\infty})} \norm{\nabla \varphi}_{L^{\infty}(0,T;L^{2})} + \norm{\nabla \sigma}_{L^{2}(Q)} + \norm{\Laplace \sigma_{0}}_{L^{2}} \right ),
\end{align*}
after employing the boundedness of $h$ and $h'$.  
Together with the elliptic estimate \eqref{H2EllEst} this implies 
that $\sigma \in L^{\infty}(0,T;H^{2}) \cap H^{1}(0,T;H^{1})$.  
Moreover, since $\sigma_{t} + \CC h(\varphi) \sigma \in L^{2}(0,T;H^{1})$, 
elliptic regularity ensures also that $\sigma \in L^{2}(0,T;H^{3})$.

Next, differentiating \eqref{CH:3} with respect to time, 
then testing with $\sigma_{t}$ and using that 
$\sigma_{t}(0) := \Laplace \sigma_{0} - \CC h(\varphi_{0}) \sigma_{0} \in H^{1}(\Omega)$ 
yields $\sigma_{t} \in L^{\infty}(0,T;L^{2}) \cap L^{2}(0,T;H^{1})$.  
Meanwhile, testing the differentiated equation with $\Laplace \sigma_{t}$ 
gives $\sigma_{t} \in L^{\infty}(0,T;H^{1}) \cap L^{2}(0,T;H^{2})$.  
Now, since $\sigma_{t} + \CC h(\varphi) \sigma \in L^{\infty}(0,T;H^{1})$, 
we deduce from elliptic regularity the assertion $\sigma \in L^{\infty}(0,T;H^{3})$.

The estimate \eqref{Bounds} comes from applying weak/weak* lower semi continuity of the Bochner
norms.  Note that although the constant $C$ depends on the parameters $(\PP, \chi, \CC)$, it does not depend on their reciprocals.
\end{proof}

We now state the continuous dependence result on the parameter $\PP$, $\CC$, $\chi$ and on 
the initial conditions $\varphi_{0}$ and $\sigma_{0}$.  
The following result modifies the argument used in \cite[\S 6]{LamWu}, and it is worth mentioning that, 
in the presence of a variable mobility $m(\varphi)$, we require the high order 
estimates such as $\nabla \varphi \in L^{\infty}(0,T;L^{\infty})$ and $\mu \in L^{\infty}(0,T;H^{1}) \cap L^{2}(0,T;H^{3})$ for the following proof.

\begin{thm}[Continuous dependence]\label{thm:Ctsdep}
Let $\{(\varphi_{i}, \mu_{i}, \sigma_{i})\}_{i = 1}^{2}$ denote two solutions to \eqref{Intro:CH} obtained from Theorem \ref{thm:Exist} corresponding to data $\{ \varphi_{0,i}, \sigma_{0,i}, \PP_{i}, \CC_{i}, \chi_{i}\}_{i=1}^{2}$.  Then, there exists a positive constant C, not depending on the differences $\varphi_{1} - \varphi_{2}$, $\mu_{1} - \mu_{2}$, $\sigma_{1} - \sigma_{2}$, $\PP_{1} - \PP_{2}$, $\chi_{1} - \chi_{2}$, $\CC_{1} - \CC_{2}$, $\varphi_{0,1} - \varphi_{0,2}$ and $\sigma_{0,1} - \sigma_{0,2}$, such that
\begin{equation}\label{Ctsdep:result}
\begin{aligned}
& \norm{\varphi_{1} - \varphi_{2}}_{L^{\infty}(H^{2}) \cap L^{2}(H^{4})} + \norm{\sigma_{1} - \sigma_{2}}_{L^{\infty}(H^{1}) \cap L^{2}(H^{2})} + \norm{\mu_{1} - \mu_{2}}_{L^{\infty}(L^{2}) \cap L^{2}(H^{2})} \\
& \quad \leq C \left ( \abs{\PP_{1} - \PP_{2}} + \abs{\CC_{1} - \CC_{2}} + \abs{\chi_{1} - \chi_{2}} + \norm{\varphi_{0,1} - \varphi_{0,2}}_{H^{2}} + \norm{\sigma_{0,1} - \sigma_{0,2}}_{H^{1}} \right ).
\end{aligned}
\end{equation}
In particular, the solution obtained in Theorem \ref{thm:Exist} is unique.
\end{thm}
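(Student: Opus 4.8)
\emph{Strategy.} The plan is to run energy estimates on the difference system and close with Gronwall's inequality, organised as a short hierarchy that reaches the target high-order norms. Write $\hat\varphi=\varphi_1-\varphi_2$, $\hat\mu=\mu_1-\mu_2$, $\hat\sigma=\sigma_1-\sigma_2$, $\hat\PP=\PP_1-\PP_2$, $\hat\chi=\chi_1-\chi_2$, $\hat\CC=\CC_1-\CC_2$, and keep in mind the uniform bounds \eqref{Bounds} for each $(\varphi_i,\mu_i,\sigma_i)$; in particular $\varphi_i\in L^{\infty}(0,T;C^{1,\delta}(\overline\Omega))$ (so $\varphi_i$ and $\nabla\varphi_i$ are bounded in $L^{\infty}(Q)$) and $\mu_i\in L^{\infty}(H^1)\cap L^2(H^3)$. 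Subtracting the two copies of \eqref{Intro:CH} and splitting each nonlinearity into a ``same-parameter'' part and an explicit parameter-difference part, e.g.
\begin{align*}
& m(\varphi_1)\nabla(\mu_1-\chi_1\sigma_1) - m(\varphi_2)\nabla(\mu_2-\chi_2\sigma_2) \\
& \quad = m(\varphi_1)\nabla(\hat\mu-\chi_1\hat\sigma) + (m(\varphi_1)-m(\varphi_2))\nabla(\mu_2-\chi_1\sigma_2) - \hat\chi\, m(\varphi_2)\nabla\sigma_2,
\end{align*}
and analogously for $\PP_1 f(\varphi_1)g(\sigma_1)-\PP_2 f(\varphi_2)g(\sigma_2)$ and $\CC_1 h(\varphi_1)\sigma_1-\CC_2 h(\varphi_2)\sigma_2$, gives the difference system. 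By \eqref{assump:h} and \eqref{assump:Pot} together with the $L^{\infty}$-bounds on the states, the same-parameter parts are controlled by $\abs{\hat\varphi}$ and $\abs{\hat\sigma}$, while the parameter-difference parts are bounded by $\abs{\hat\PP},\abs{\hat\chi},\abs{\hat\CC}$ times quantities that are integrable in time.

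\emph{Nutrient estimate.} Testing the $\hat\sigma$-equation with $\hat\sigma$ and with $-\Laplace\hat\sigma$, and using boundedness of $h,h'$ and of $\sigma_i$, the source terms are dominated by $\norm{\hat\sigma}_{H^1}$, $\norm{\hat\varphi}_{L^2}$ and $\abs{\hat\CC}$; with the dissipation $\norm{\nabla\hat\sigma}_{L^2}^2$, respectively $\norm{\Laplace\hat\sigma}_{L^2}^2$, and the elliptic estimate \eqref{H2EllEst}, this yields a differential inequality of the form
\[
\tfrac{d}{dt}\norm{\hat\sigma}_{H^1}^2 + \norm{\hat\sigma}_{H^2}^2 \le C\big(\norm{\hat\sigma}_{H^1}^2 + \norm{\hat\varphi}_{L^2}^2 + \abs{\hat\CC}^2\big),
\]
so that $\hat\sigma\in L^{\infty}(H^1)\cap L^2(H^2)$ once coupled to the $\hat\varphi$-estimate.

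\emph{Cahn--Hilliard estimate (the crux).} The high-order norm $\hat\varphi\in L^{\infty}(H^2)\cap L^2(H^4)$ is produced by testing the difference of \eqref{CH:1} with $\Laplace^2\hat\varphi$. Since $\pdnu\hat\varphi=\pdnu\Laplace\hat\varphi=0$, the time term equals $\tfrac12\tfrac{d}{dt}\norm{\Laplace\hat\varphi}_{L^2}^2$. Substituting $\hat\mu=\beta\eps^{-1}(\Psi'(\varphi_1)-\Psi'(\varphi_2))-\beta\eps\Laplace\hat\varphi$ and expanding the divergence, the leading term $-\beta\eps\div(m(\varphi_1)\nabla\Laplace\hat\varphi)=-\beta\eps\big(m(\varphi_1)\Laplace^2\hat\varphi+\nabla m(\varphi_1)\cdot\nabla\Laplace\hat\varphi\big)$ contributes, after pairing with $\Laplace^2\hat\varphi$, the coercive quantity $\beta\eps\int_\Omega m(\varphi_1)\abs{\Laplace^2\hat\varphi}^2\ge\beta\eps\,n_0\norm{\Laplace^2\hat\varphi}_{L^2}^2$, which controls the $L^2(H^4)$-norm via \eqref{H4EllEst}. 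The genuine difficulty is the variable-mobility cross term $\div\big((m(\varphi_1)-m(\varphi_2))\nabla(\mu_2-\chi_1\sigma_2)\big)$, whose expansion produces $(m(\varphi_1)-m(\varphi_2))\Laplace\mu_2$ and $\nabla(m(\varphi_1)-m(\varphi_2))\cdot\nabla\mu_2=m'(\varphi_1)\nabla\hat\varphi\cdot\nabla\mu_2+(m'(\varphi_1)-m'(\varphi_2))\nabla\varphi_2\cdot\nabla\mu_2$ and the analogous $\sigma_2$-terms. These are not controllable by the energy-level regularity of $\mu_2$ alone; here I would invoke the bounds $\nabla\varphi_i\in L^{\infty}(L^{\infty})$ and $\mu_2\in L^{\infty}(H^1)\cap L^2(H^3)$ from Theorem \ref{thm:Exist}, so that, using the $2$D embeddings $H^2\hookrightarrow L^{\infty}$, $H^1\hookrightarrow L^4$ and the interpolation inequalities \eqref{GN:alt}, \eqref{Bre}, each such term is bounded in $L^2$ by $C\,g(t)^{1/2}\norm{\hat\varphi}_{H^2}$ with $g\in L^1(0,T)$ built from $\norm{\mu_2}_{H^2}^2$ and $\norm{\mu_2}_{H^3}^2$; where the factor $\Laplace^2\hat\varphi$ is too costly, one integrates by parts to shift a derivative onto $\mu_2$ (hence the need for $\mu_2\in L^2(H^3)$) and pairs with $\nabla\Laplace\hat\varphi$, absorbed by interpolation. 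The chemotaxis coupling $-\chi_1 m(\varphi_1)\Laplace\hat\sigma$ contributes a term controlled by $\norm{\hat\sigma}_{H^2}$, borrowed from the nutrient estimate, while the $\Laplace(\Psi'(\varphi_1)-\Psi'(\varphi_2))$-terms are handled by \eqref{assump:Pot} and the regularity of $\varphi_i$. All remaining contributions are split by Young's inequality, absorbing a small multiple of $\norm{\Laplace^2\hat\varphi}_{L^2}^2$ and leaving lower-order quantities for Gronwall.

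\emph{Closure and recovery of $\hat\mu$.} Controlling the mean of $\hat\varphi$ separately from $\tfrac{d}{dt}\int_\Omega\hat\varphi=\int_\Omega(\PP_1 f(\varphi_1)g(\sigma_1)-\PP_2 f(\varphi_2)g(\sigma_2))$, adding the nutrient and Cahn--Hilliard inequalities and choosing the Young parameters small yields
\[
\tfrac{d}{dt}\mathcal{E}(t) + c\big(\norm{\hat\varphi}_{H^4}^2+\norm{\hat\sigma}_{H^2}^2\big) \le g(t)\,\mathcal{E}(t) + C\big(\abs{\hat\PP}^2+\abs{\hat\chi}^2+\abs{\hat\CC}^2\big),
\]
with $\mathcal{E}=\norm{\Laplace\hat\varphi}_{L^2}^2+\norm{\hat\varphi}_{L^2}^2+\norm{\hat\sigma}_{H^1}^2$ and $g\in L^1(0,T)$ assembled from the uniform norms of the two solutions. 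Since $\mathcal{E}(0)\le C(\norm{\varphi_{0,1}-\varphi_{0,2}}_{H^2}^2+\norm{\sigma_{0,1}-\sigma_{0,2}}_{H^1}^2)$, Gronwall's inequality delivers the $\hat\varphi$- and $\hat\sigma$-parts of \eqref{Ctsdep:result}. The bound on $\hat\mu$ then follows \emph{a posteriori} from $\hat\mu=\beta\eps^{-1}(\Psi'(\varphi_1)-\Psi'(\varphi_2))-\beta\eps\Laplace\hat\varphi$: the $L^{\infty}(L^2)$-part from $\hat\varphi\in L^{\infty}(H^2)$ and the Lipschitz bound on $\Psi'$, the $L^2(H^2)$-part from $\hat\varphi\in L^2(H^4)$ after estimating $\Laplace(\Psi'(\varphi_1)-\Psi'(\varphi_2))$ in $L^2(L^2)$ via \eqref{assump:Pot}. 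Uniqueness is the special case of equal data. I expect the main obstacle to be exactly the systematic absorption of the variable-mobility cross terms, which is what forces the high-order continuous-dependence norms in place of the energy-level norms of \cite{LamWu}.
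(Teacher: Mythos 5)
Your proposal is correct in substance, and it reaches the result by a genuinely different organisation than the paper. The paper runs a five-step bootstrap that keeps $\hat\mu$ as an unknown throughout: first an $L^2$-level estimate (testing with $\beta\eps\hat\varphi$, $m(\varphi_2)\hat\mu$, $K\hat\varphi$, $J\hat\sigma$) giving $\hat\varphi,\hat\sigma\in L^\infty(L^2)$ and $\hat\mu\in L^2(Q)$; then $\hat\varphi\in L^2(H^2)$ by elliptic regularity; then the nutrient bounds via a $\hat\sigma_t$ test; then, crucially, a fourth estimate (testing with $\hat\mu$ and $\hat\varphi_t$) producing $\nabla\hat\varphi\in L^\infty(L^2)$ and $\nabla\hat\mu\in L^2(Q)$, which is needed precisely because in the final $\Laplace^{2}\hat\varphi$ test the paper only substitutes $\Laplace\hat\mu$ and keeps the cross term $m'(\varphi_2)\nabla\varphi_2\cdot\nabla\hat\mu$; and only then the top-order test. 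You instead eliminate $\hat\mu$ completely via \eqref{Ctsdep:2} before testing with $\Laplace^{2}\hat\varphi$, so the cross term becomes $m'(\varphi_1)\nabla\varphi_1\cdot\bigl(\beta\eps^{-1}\nabla\hat\Psi'-\beta\eps\nabla\Laplace\hat\varphi\bigr)$, handled by the Lipschitz bounds on $\Psi''$ and the interpolation $\norm{\nabla\Laplace\hat\varphi}_{L^2}^2\leq\norm{\Laplace\hat\varphi}_{L^2}\norm{\Laplace^{2}\hat\varphi}_{L^2}$ (legitimate since $\pdnu\Laplace\hat\varphi=0$, which follows from $\pdnu\hat\mu=\pdnu\hat\varphi=0$); this collapses the paper's first and fourth estimates and lets you close everything in one coupled Gronwall, recovering $\hat\mu$ a posteriori exactly as the paper does in its last lines. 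Two prices you pay, which you correctly flag but should execute carefully: (i) the top-order test loses control of $\norm{\hat\varphi}_{L^2}$ (the divergence-form terms vanish against constants), so the mean $\overline{\hat\varphi}$ must be tracked by its ODE and combined with $\norm{\hat\varphi}_{L^2}\leq C(\norm{\Laplace\hat\varphi}_{L^2}+\abs{\overline{\hat\varphi}})$ — your $\mathcal{E}$ should really carry $\abs{\overline{\hat\varphi}}^2$ rather than $\norm{\hat\varphi}_{L^2}^2$ for the argument to be non-circular; (ii) all nonlinearities now appear at top order simultaneously, so every term must be paired against $\norm{\Laplace^{2}\hat\varphi}_{L^2}$, whereas the paper's incremental route spreads the difficulty and, as a by-product, yields the intermediate estimates \eqref{ctsdep:est:1}--\eqref{Ctsdep:nablamu} that are reused verbatim in Section \ref{sec:ConstMob} for the constant-mobility/polygonal case. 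Both routes rest on the same essential ingredients you identify: the Theorem \ref{thm:Exist} regularity $\nabla\varphi_i\in L^\infty(0,T;L^\infty)$, $\Laplace\varphi_1\in L^2(0,T;L^\infty)$, $\mu_i\in L^\infty(0,T;H^1)\cap L^2(0,T;H^3)$, the structural assumption \eqref{assump:Pot} (including \eqref{ctsdep:Psi'''} for $\Laplace\hat\Psi'$), and the elliptic estimates \eqref{H2EllEst}, \eqref{H4EllEst}.
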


Before we give the proof, let us remark that the above 
theorem shows continuous dependence with rather high Sobolev norms, 
even compared to previous works in the literature 
\cite{GLDiri, GLNeu, GLR:Opt, LamWu}. 
 In fact, as we will see later in Section \ref{sec:Fdiff}, 
 to show that the solution operator associated to \eqref{Intro:CH} is 
 Fr\'{e}chet differentiable when the mobility $m$ is variable, 
 the regularities as stated in \eqref{Ctsdep:result} are indispensable.

\begin{proof}
Let $\{(\varphi_{i}, \mu_{i}, \sigma_{i})\}_{i=1}^{2}$ denote two strong solutions to \eqref{Intro:CH} corresponding to the data $\{(\varphi_{0,i}, \sigma_{0,i}, \PP_{i}, \CC_{i}, \chi_{i}\}_{i=1}^{2}$.  Denoting $h_{i} = h(\varphi_{i})$, $\hat{h} := h_{1} - h_{2}$ and likewise for $f$, $g$, $m$, $\Psi'$, the differences $(\hat{\varphi}, \hat{\mu}, \hat{\sigma})$ with initial data $\hat{\varphi}(0) = \hat{\varphi}_{0}$ and $\hat{\sigma}(0) = \hat{\sigma}_{0}$ satisfy
\begin{subequations}
\begin{align}
\hat{\varphi}_{t} & = \div \left ( \hat{m} \nabla (\mu_{1} - \chi_{1} \sigma_{1}) + m_{2} \nabla (\hat{\mu} - \hat{\chi} \sigma_{1} - \chi_{2} \hat{\sigma}) \right ) + \hat{\PP} f_{1} g_{1} + \PP_{2} \hat{f} g_{1} + \PP_{2} f_{2} \hat{g}, \label{Ctsdep:1}\\
\hat{\mu} & = \beta \eps^{-1} \hat{\Psi}' - \beta \eps \Laplace \hat{\varphi}, \label{Ctsdep:2} \\
\hat{\sigma}_{t} & = \Laplace \hat{\sigma} - \hat{\CC} h_{1} \sigma_{1} - \CC_{2} \hat{h} \sigma_{1} - \CC_{2} h_{2} \hat{\sigma}. \label{Ctsdep:3}
\end{align}
\end{subequations}

In the following the symbol $C$ denotes positive constants not depending on the differences $\hat{\varphi}$, $\hat{\mu}$, $\hat{\sigma}$, $\hat{\PP} := \PP_{1} - \PP_{2}$, $\hat{\chi} := \chi_{1} - \chi_{2}$ and $\hat{\CC} := \CC_{1} - \CC_{2}$, and may vary from line to line.  

\paragraph{First estimate.} We test \eqref{Ctsdep:1} with $\beta \eps \hat{\varphi}$, \eqref{Ctsdep:2} with $m(\varphi_{2}) \hat{\mu}$, and $K \hat{\varphi}$, \eqref{Ctsdep:3} with $J \hat{\sigma}$, for positive constants $J,K$ yet to be determined, and upon adding we arrive at the differential inequality (after neglecting the non-negative term $J \CC_{2} h_{2} \abs{\hat{\sigma}}^{2}$)
\begin{align*}
& \frac{1}{2} \frac{\dd}{\dt} \left ( \beta \eps \norm{\hat{\varphi}}_{L^{2}}^{2} + J \norm{\hat{\sigma}}_{L^{2}}^{2} \right ) + J \norm{\nabla \hat{\sigma}}_{L^{2}}^{2} + K \beta \eps \norm{\nabla \hat{\varphi}}_{L^{2}}^{2} + n_{0} \norm{\hat{\mu}}_{L^{2}}^{2} \\
& \quad \leq \int_{\Omega} J \CC_{2} \abs{\sigma_{1}} \abs{\hat{h}} \abs{\hat{\varphi}} + \beta \eps^{-1} K \abs{\hat{\Psi}'} \abs{\hat{\varphi}} + K \abs{\hat{\mu}} \abs{\hat{\varphi}} + \beta \eps^{-1} m_{2} \abs{\hat{\Psi}'} \abs{\hat{\mu}} \dx \\
& \qquad + \int_{\Omega}  \beta \eps \PP_{2} \left ( \abs{g_{1}} \abs{\hat{f}} + \abs{f_{2}} \abs{\hat{g}} \right ) \abs{\hat{\varphi}} + \beta \eps \abs{f_{1}} \abs{g_{1}} \abs{\hat{\PP}} \abs{\hat{\varphi}} + J h_{1} \abs{\sigma_{1}} \abs{\hat{\CC}} \abs{\hat{\sigma}} \dx \\
& \qquad + \int_{\Omega} \beta \eps m_{2} \abs{\hat{\chi}} \abs{\nabla \sigma_{1}} \abs{\nabla \hat{\varphi}} + \beta \eps m_{2} \chi_{2} \abs{\nabla \hat{\sigma}} \abs{\nabla \hat{\varphi}} \dx \\
& \qquad + \int_{\Omega} \beta \eps \abs{m_{2}'} \abs{\nabla \varphi_{2}} \abs{\nabla \hat{\varphi}} \abs{\hat{\mu}} + \beta \eps \abs{\nabla (\mu_{1} - \chi_{1} \sigma_{1})} \abs{\hat{m}} \abs{\nabla \hat{\varphi}} \dx \\
& \quad =: I_{1} + I_{2} + I_{3} + I_{4}. 
\end{align*}
Recalling the boundedness and Lipschitz continuity of $f$, $g$, $h$ and $m$ from \eqref{assump:h}, the Gagliardo--Nirenberg inequality \eqref{GN:alt}, \eqref{assump:Pot}, as well as the boundedness of $\sigma_{i}, \varphi_{i}$ a.e. in $Q$, we observe that for 
\begin{align*}
I_{1} & \leq \frac{n_{0}}{4} \norm{\hat{\mu}}_{L^{2}}^{2} + C\left ( 1 + \norm{\varphi_{i}}_{L^{\infty}(L^{\infty})}^{2r} \right )\norm{\hat{\varphi}}_{L^{2}}^{2}, \\
I_{2} & \leq C \left ( \norm{\hat{\varphi}}_{L^{2}}^{2} + \norm{\hat{\sigma}}_{L^{2}}^{2} + \abs{\hat{\PP}}^{2} + \abs{\hat{\CC}}^{2} \right ), \\
I_{3} & \leq C \norm{\nabla \sigma_{1}}_{L^{\infty}(L^{2})}^{2} \abs{\hat{\chi}}^{2} + \frac{\beta^{2} \eps^{2} n_{1}^{2} \chi_{2}^{2}}{2} \norm{\nabla \hat{\sigma}}_{L^{2}}^{2} + \norm{\nabla \hat{\varphi}}_{L^{2}}^{2}, \\
I_{4} & \leq \frac{\beta^{2} \eps^{2}}{n_{0}} \norm{m'_{2} \nabla \varphi_{2}}_{L^{\infty}(L^{\infty})}^{2} \norm{\nabla \hat{\varphi}}_{L^{2}}^{2} + \frac{n_{0}}{4} \norm{\hat{\mu}}_{L^{2}}^{2} \\
& \quad + C \beta \eps \norm{\nabla (\mu_{1} - \chi_{1} \sigma_{1})}_{L^{4}} \left ( \norm{\hat{\varphi}}_{L^{2}}^{\frac{1}{2}} \norm{\nabla \hat{\varphi}}_{L^{2}}^{\frac{1}{2}} + \norm{\hat{\varphi}}_{L^{2}} \right) \norm{\nabla \hat{\varphi}}_{L^{2}} \\
& \leq \left ( 1 + \frac{\beta^{2} \eps^{2}}{n_{0}} \norm{m'_{2} \nabla \varphi_{2}}_{L^{\infty}(L^{\infty})}^{2} \right ) \norm{\nabla \hat{\varphi}}_{L^{2}}^{2} + \frac{n_{0}}{4} \norm{\hat{\mu}}_{L^{2}}^{2} \\
& \quad + C \left ( 1 + \norm{\nabla (\mu_{1} - \chi_{1} \sigma_{1})}_{L^{4}}^{4} \right ) \norm{\hat{\varphi}}_{L^{2}}^{2}.
\end{align*}
Collecting the estimates then yields the differential inequality
\begin{align*}
& \frac{1}{2} \frac{\dd}{\dt} \left ( \beta \eps \norm{\hat{\varphi}}_{L^{2}}^{2} + J \norm{\hat{\sigma}}_{L^{2}}^{2} \right ) + \left ( J - \frac{\beta^{2} \eps^{2} n_{1}^{2} \chi_{2}^{2}}{2} \right ) \norm{\nabla \hat{\sigma}}_{L^{2}}^{2} \\
& \qquad  + \left ( K \beta \eps - 2 - \frac{\beta^{2} \eps^{2}}{n_{0}} \norm{m'(\varphi_{2})\nabla \varphi_{2}}_{L^{\infty}(L^{\infty})}^{2} \right ) \norm{\nabla \hat{\varphi}}_{L^{2}}^{2} + \frac{n_{0}}{2} \norm{\hat{\mu}}_{L^{2}}^{2} \\
& \quad \leq C \left ( 1 + \norm{\nabla (\mu_{1} - \chi \sigma_{1})}_{L^{4}}^{4} \right )\left ( \norm{\hat{\varphi}}_{L^{2}}^{2} + \norm{\hat{\sigma}}_{L^{2}}^{2} + \abs{\hat{\chi}}^{2} + \abs{\hat{\PP}}^{2} + \abs{\hat{\CC}}^{2} \right ),
\end{align*}
where $C$ is a positive constant depending on $J$, $K$, $\norm{\varphi_{i}}_{L^{\infty}(0,T;L^{\infty})}^{2r}$, $\norm{\sigma_{1}}_{L^{\infty}(0,T;H^{1})}^{2}$.  Choosing $J$ and $K$ sufficiently large so that the coefficients on the left-hand side are positive, and using $\nabla \mu_{1}$, $\nabla \sigma_{1} \in L^{\infty}(0,T;L^{2}) \cap L^{2}(0,T;H^{1}) \subset L^{4}(Q)$ and a Gronwall argument, we arrive at 
\begin{equation}\label{ctsdep:est:1}
\begin{aligned}
& \sup_{t \in (0,T)} \left ( \norm{\hat{\varphi}(t)}_{L^{2}}^{2} + \norm{\hat{\sigma}(t)}_{L^{2}}^{2} \right ) + \norm{\nabla \hat{\sigma}}_{L^{2}(Q)}^{2} + \norm{\nabla \hat{\varphi}}_{L^{2}(Q)}^{2} + \norm{\hat{\mu}}_{L^{2}(Q)}^{2} \\
& \quad \leq C \left ( \abs{\hat{\chi}}^{2} + \abs{\hat{\PP}}^{2} + \abs{\hat{\CC}}^{2} + \norm{\hat{\varphi}_{0}}_{L^{2}}^{2} + \norm{\hat{\sigma}_{0}}_{L^{2}}^{2} \right ) =: C \mathcal{Y}.
\end{aligned}
\end{equation}

\paragraph{Second estimate.}
From the assumption \eqref{assump:Pot} and \eqref{ctsdep:est:1}, we infer that
\begin{align*}
\norm{\hat{\Psi}'}_{L^{2}(Q)}^{2} \leq C \left ( 1 + \norm{\varphi_{i}}_{L^{\infty}(L^{\infty})}^{2r} \right ) \norm{\hat{\varphi}}_{L^{2}(Q)} \leq C \mathcal{Y}.
\end{align*}
Then, viewing \eqref{Ctsdep:2} as an elliptic equation for $\hat{\varphi}$ and by virtue of elliptic regularity,
\begin{align}\label{ctsdep:est:2}
\norm{\hat{\varphi}}_{L^{2}(0,T;H^{2})}^{2} \leq C \left ( \norm{\hat{\varphi}}_{L^{2}(0,T;H^{1})}^{2} + \norm{\hat{\Psi}'}_{L^{2}(Q)}^{2} + \norm{\hat{\mu}}_{L^{2}(Q)}^{2} \right ) \leq C \mathcal{Y}.
\end{align}

\paragraph{Third estimate.}
Testing \eqref{Ctsdep:3} with $\hat{\sigma}_{t}$, integrating in time yields
\begin{equation}\label{ctsdep:est:3}
\begin{aligned}
\norm{\nabla \hat{\sigma}}_{L^{\infty}(0,T;L^{2})}^{2} + \norm{\hat{\sigma}_{t}}_{L^{2}(Q)}^{2} & \leq C \left ( \norm{\sigma_{1}}_{L^{\infty}(0,T;L^{2})}^{2} \abs{\hat{\CC}}^{2} + \norm{\sigma_{1}}_{L^{\infty}(0,T;L^{2})}^{2} \norm{\hat{\varphi}}_{L^{2}(0,T;L^{\infty})}^{2} \right ) \\
& \quad  + C \left ( \norm{\hat{\sigma}}_{L^{2}(Q)}^{2} + \norm{\nabla \hat{\sigma}_{0}}_{L^{2}}^{2} \right ) \leq C \left ( \mathcal{Y} + \norm{\nabla \hat{\sigma}_{0}}_{L^{2}}^{2} \right ),
\end{aligned}
\end{equation}
and via elliptic regularity,
\begin{align}\label{Ctsdep:sigmaH2}
\norm{\hat{\sigma}}_{L^{2}(0,T;H^{2})}^{2} \leq C \left ( \mathcal{Y} + \norm{\nabla \hat{\sigma}_{0}}_{L^{2}}^{2} \right ).
\end{align}

\paragraph{Fourth estimate.} From \eqref{Ctsdep:1}, the boundedness of $\nabla (\mu_{i} - \chi_{i} \sigma_{i})$ in $L^{4}(Q)$, we easily infer that
\begin{equation}\label{Ctsdep:varphi_t}
\begin{aligned}
\norm{\hat{\varphi}_{t}}_{L^{2}(0,t;(H^{1})')} & \leq n_{1} \norm{\nabla \hat{\mu}}_{L^{2}(0,t;L^{2})} + C \norm{\nabla (\mu_{1} - \chi_{1} \sigma_{1})}_{L^{4}(Q)} \norm{\hat{\varphi}}_{L^{4}(Q)} \\
& \qquad  + C \left ( \norm{\hat{\sigma}}_{L^{2}(0,T;H^{1})} + \norm{\hat{\varphi}}_{L^{2}(Q)} + \abs{\hat{\chi}}  + \abs{\hat{\PP}} \right ) \\
& \leq n_{1} \norm{\nabla \hat{\mu}}_{L^{2}(0,t;L^{2})} + C \mathcal{Y}^{\frac{1}{2}}
\end{aligned}
\end{equation}
for any $t \in (0,T)$.  Meanwhile, from \eqref{assump:Pot} and $\nabla \varphi_{i} \in L^{\infty}(0,T;L^{\infty})$, we have
\begin{equation}\label{Ctsdep:nabla:Psi}
\begin{aligned}
\norm{\nabla \hat{\Psi}'}_{L^{2}}^{2} & \leq \int_{\Omega} \abs{\Psi''(\varphi_{1})}^{2} \abs{\nabla \hat{\varphi}}^{2} + \abs{\nabla \varphi_{2}}^{2} \abs{\Psi''(\varphi_{1}) - \Psi''(\varphi_{2})}^{2} \dx \\
& \leq C \left ( 1 + \norm{\varphi_{i}}_{L^{\infty}(L^{\infty})}^{2q} \right ) \norm{\nabla \hat{\varphi}}_{L^{2}}^{2} + C \left ( 1 + \norm{\varphi_{i}}_{L^{\infty}(L^{\infty})}^{2(r-1)} \right ) \norm{\hat{\varphi}}_{L^{2}}^{2}  \\
& \leq C \norm{\hat{\varphi}}_{H^{1}}^{2}.
\end{aligned}
\end{equation}
Then, testing \eqref{Ctsdep:1} with $\hat{\mu}$ and \eqref{Ctsdep:2} with $\hat{\varphi}_{t}$, and upon summing gives
\begin{equation}\label{Ctsdep:est:4:pre}
\begin{aligned}
& \frac{\beta \eps}{2} \norm{\nabla \hat{\varphi}(t)}_{L^{2}}^{2} + \int_{0}^{t} \int_{\Omega} m_{2} \abs{\nabla \hat{\mu}}^{2} \dx \\
& \quad \leq \frac{\beta \eps}{2} \norm{\nabla \hat{\varphi}_{0}}_{L^{2}}^{2} + \norm{\nabla (\mu_{1} - \chi \sigma_{1})}_{L^{4}(Q)} \norm{\hat{m}}_{L^{4}(Q)} \norm{\nabla \hat{\mu}}_{L^{2}(0,t;L^{2})} \\
& \qquad + C \left ( \abs{\hat{\chi}} \norm{\nabla \sigma_{1}}_{L^{2}(Q)}  + \norm{\nabla \hat{\sigma}}_{L^{2}(Q)} \right ) \norm{\nabla \hat{\mu}}_{L^{2}(0,t;L^{2})}  \\
& \qquad + \norm{\hat{\mu}}_{L^{2}(Q)} \left ( \abs{\hat{P}} + \norm{\hat{\varphi}}_{L^{2}(Q)} + \norm{\hat{\sigma}}_{L^{2}(Q)} \right ) + \beta \eps^{-1} \norm{\hat{\Psi}'}_{L^{2}(0,T;H^{1})} \norm{\hat{\varphi}_{t}}_{L^{2}(0,t;(H^{1})')}
\end{aligned}
\end{equation}
for any $t \in (0,T)$.  From \eqref{ctsdep:est:1}, \eqref{Ctsdep:varphi_t} and \eqref{Ctsdep:nabla:Psi}, it holds
\begin{align*}
& \norm{\hat{m}}_{L^{4}(Q)} \leq C \norm{\hat{\varphi}}_{L^{4}(Q)} \leq C \norm{\hat{\varphi}}_{L^{2}(0,T;H^{1})}^{\frac{1}{2}} \norm{\hat{\varphi}}_{L^{\infty}(0,T;L^{2})}^{\frac{1}{2}} \leq C \mathcal{Y}^{\frac{1}{2}}, \\
& \beta \eps^{-1} \norm{\hat{\Psi}'}_{L^{2}(0,T;H^{1})} \norm{\hat{\varphi}_{t}}_{L^{2}(0,t;(H^{1})')} \leq \frac{n_{0}}{2} \norm{\nabla \hat{\mu}}_{L^{2}(0,t;L^{2})}^{2} + C \mathcal{Y},
\end{align*}
so that we obtain from \eqref{Ctsdep:est:4:pre} (also recalling \eqref{Ctsdep:varphi_t})
\begin{align}\label{Ctsdep:nablamu}
\norm{\nabla \hat{\varphi}}_{L^{\infty}(0,T;L^{2})}^{2} + \norm{\nabla \hat{\mu}}_{L^{2}(Q)}^{2} + \norm{\hat{\varphi}_{t}}_{L^{2}(0,T;(H^{1})')}^{2} \leq C \left ( \mathcal{Y} + \norm{\nabla \hat{\varphi}_{0}}_{L^{2}}^{2} \right ).
\end{align}
In light of the above estimate, \eqref{Ctsdep:nabla:Psi} and elliptic regularity yield
\begin{align*}
\norm{\hat{\Psi}'}_{L^{\infty}(0,T;H^{1})}^{2} & \leq C \norm{\hat{\varphi}}_{L^{\infty}(0,T;H^{1})}^{2} \leq C \left ( \mathcal{Y} + \norm{\nabla \hat{\varphi}_{0}}_{L^{2}}^{2} \right ), \\
\norm{\hat{\varphi}}_{L^{2}(0,T;H^{3})} & \leq C \left ( \mathcal{Y} + \norm{\nabla \hat{\varphi}_{0}}_{L^{2}}^{2} \right ).
\end{align*}

\paragraph{Fifth estimate.} 
A short calculation using \eqref{assump:Pot}, $\varphi_{i}, \nabla \varphi_{i} \in L^{\infty}(0,T;L^{\infty})$ and $\Laplace \varphi_{i} \in L^{2}(0,T;H^{2})$ shows that
\begin{equation}\label{Ctsdep:Lap:Psi}
\begin{aligned}
\norm{\Laplace \hat{\Psi}'}_{L^{2}}^{2} & \leq \int_{\Omega} \abs{(\Psi_{1}''' - \Psi_{2}''') \abs{\nabla \varphi_{1}}^{2} + \Psi_{2}''' \nabla \hat{\varphi} \cdot \nabla \varphi_{1} + \Psi_{2}''' \nabla \varphi_{2} \cdot \nabla \hat{\varphi} }^{2} \dx \\
& \quad + \int_{\Omega} \abs{(\Psi_{1}'' - \Psi_{2}'') \Laplace \varphi_{1}  + \Psi_{2}'' \Laplace \hat{\varphi}}^{2} \dx \\
& \leq C \left ( 1 + \norm{\Laplace \varphi_{1}}_{L^{\infty}}^{2} \right ) \left ( \norm{\hat{\varphi}}_{L^{2}}^{2} + \norm{\nabla \hat{\varphi}}_{L^{2}}^{2} + \norm{\Laplace \hat{\varphi}}_{L^{2}}^{2} \right ).
\end{aligned}
\end{equation}
Then, consider testing \eqref{Ctsdep:1} with $\Laplace^{2} \hat{\varphi}$ and keeping in mind the identity
\begin{align*}
\div (m_{2} \nabla \hat{\mu}) \Laplace^{2} \hat{\varphi} = (m'(\varphi_{2}) \nabla \varphi_{2} \cdot \nabla \hat{\mu}) \Laplace^{2} \hat{\varphi} + m_{2} \beta \eps^{-1} \Laplace \hat{\Psi}' \Laplace^{2} \hat{\varphi} - m_{2} \beta \eps \abs{\Laplace^{2} \hat{\varphi}}^{2},
\end{align*}
we arrive at (denoting $L := \mu_{1} - \chi_{1} \sigma_{1}$)
\begin{align*}
& \frac{1}{2} \frac{\dd}{\dt} \norm{\Laplace \hat{\varphi}}_{L^{2}}^{2} + n_{0} \beta \eps \norm{\Laplace^{2} \hat{\varphi}}_{L^{2}}^{2} \\
& \quad \leq \int_{\Omega} \left (\hat{m} \Laplace L + m'(\varphi_{1}) \nabla \hat{\varphi} \cdot \nabla L + (m'(\varphi_{1}) - m'(\varphi_{2})) \nabla \varphi_{2} \cdot \nabla L \right ) \Laplace^{2} \hat{\varphi} \dx \\
& \qquad - \int_{\Omega} \left ( m_{2} \Laplace (\hat{\chi} \sigma_{1} + \chi_{2} \hat{\sigma}) + m'(\varphi_{2}) \nabla \varphi_{2} \cdot \nabla (\hat{\chi} \sigma_{1} + \chi_{2} \hat{\sigma}) \right )\Laplace^{2} \hat{\varphi} \dx \\
& \qquad + \int_{\Omega} \left ( \hat{P} f_{1} g_{1} + \PP_{2} \hat{f} g_{1} + \PP_{2} f_{2} \hat{g} + m'(\varphi_{2}) \nabla \varphi_{2} \cdot \nabla \hat{\mu} + m_{2} \beta \eps^{-1} \Laplace \hat{\Psi}' \right ) \Laplace^{2} \hat{\varphi} \dx.
\end{align*}
Recalling that $\mu_{i}, \sigma_{i} \in L^{\infty}(0,T;H^{1}) \cap L^{2}(0,T;H^{3})$, so that $\Laplace L \in L^{2}(0,T;H^{1})$, $\nabla L \in L^{\infty}(0,T;L^{2}) \cap L^{2}(0,T;H^{2})$, and keeping in mind the continuous dependence estimates \eqref{Ctsdep:sigmaH2}, \eqref{Ctsdep:nablamu} and \eqref{Ctsdep:Lap:Psi}, a short calculation shows that
\begin{align*}
& \frac{1}{2} \norm{\Laplace \hat{\varphi}(s)}_{L^{2}}^{2} - C \int_{0}^{s} \left ( 1 + \norm{\Laplace \varphi_{1}}_{L^{\infty}}^{2} \right ) \norm{\Laplace \hat{\varphi}}_{L^{2}}^{2} \dt + \frac{1}{2} n_{0} \beta \eps \norm{\Laplace^{2} \hat{\varphi}}_{L^{2}(0,s;L^{2})}^{2} \\
& \quad \leq C \norm{\Laplace L}_{L^{2}(L^{3})}^{2} \norm{\hat{\varphi}}_{L^{\infty}(L^{6})}^{2} + C \norm{\nabla L}_{L^{2}(L^{\infty})}^{2} \norm{\hat{\varphi}}_{L^{\infty}(H^{1})}^{2}  \\
& \qquad + C \abs{\hat{\chi}}^{2} \norm{\sigma_{1}}_{L^{2}(H^{2})}^{2} + C \norm{\hat{\sigma}}_{L^{2}(H^{2})}^{2} + C \left ( \abs{\hat{P}}^{2} + \norm{\hat{\varphi}}_{L^{2}(Q)}^{2} + \norm{\hat{\sigma}}_{L^{2}(Q)}^{2} \right ) \\
& \qquad + C \norm{\nabla \hat{\mu}}_{L^{2}(Q)}^{2} + C \left ( 1 + \norm{\Laplace \varphi_{1}}_{L^{2}(L^{\infty})}^{2} \right ) \norm{\hat{\varphi}}_{L^{\infty}(H^{1})}^{2} + C \norm{\Laplace \hat{\varphi}_{0}}_{L^{2}}^{2} \\
& \quad \leq C \left ( \mathcal{Y} + \norm{\hat{\sigma}_{0}}_{H^{1}}^{2} + \norm{\hat{\varphi}_{0}}_{H^{2}}^{2} \right )
\end{align*}
for all $s \in (0,T)$.  A Gronwall argument and by virtue of elliptic regularity \eqref{H2EllEst} and \eqref{H4EllEst} we have
\begin{align*}
\norm{\hat{\varphi}}_{L^{\infty}(0,T;H^{2}) \cap L^{2}(0,T;H^{4})} \leq C \left ( \mathcal{Y} + \norm{\hat{\sigma}_{0}}_{H^{1}}^{2} + \norm{\hat{\varphi}_{0}}_{H^{2}}^{2} \right ).
\end{align*}
Thanks to the estimates for $\hat{\varphi}$ in $L^{\infty}(0,T;H^{2}) \cap L^{2}(0,T;H^{4})$ and the estimate \eqref{Ctsdep:Lap:Psi}, we infer from \eqref{Ctsdep:2} and also taking the Laplacian of \eqref{Ctsdep:2},
\begin{align*}
\norm{\hat{\mu}}_{L^{\infty}(0,T;L^{2}) \cap L^{2}(0,T;H^{2})}  \leq C \left ( \mathcal{Y} + \norm{\hat{\sigma}_{0}}_{H^{1}}^{2} + \norm{\hat{\varphi}_{0}}_{H^{2}}^{2} \right ).
\end{align*}
\end{proof}

\section{The parameter identification problem}\label{sec:Min}

We now consider the parameter identification problem formulated as an optimization problem: 
Given functions $\varphi_{Q} : Q \to \R$ and $\varphi_{\Omega} : \Omega \to \R$, 
$\beta_{Q}$, $\beta_{\Omega}$ non-negative constants such that $\beta_{Q} + \beta_{\Omega} > 0$, 
and non-negative constants $\beta_{\PP}$, $\beta_{\chi}$ and $\beta_{\CC}$ such that $\beta_{\PP} +
\beta_{\chi} + \beta_{\CC} > 0$. 
Further given some fixed non-negative constants $\PP_{d}$, $\chi_{d}$ and $\CC_{d}$, 
which can be seen as a priori knowledge for the parameters.
We define the optimal control problem
\begin{equation}
\tag{$P$} \label{prob:Min:P}
\begin{aligned}
\min J(\varphi, \PP, \chi, \CC) & := 
\frac{\beta_{Q}}{2} \norm{\varphi - \varphi_{Q}}_{L^{2}(Q)}^{2} + \frac{\beta_{\Omega}}{2} \norm{\varphi(T) - \varphi_{\Omega}}_{L^{2}}^{2}  \\
& \quad + \frac{\beta_{\PP}}{2} \abs{\PP - \PP_{d}}^{2} + \frac{\beta_{\chi}}{2} \abs{\chi - \chi_{d}}^{2}
+ \frac{\beta_{\CC}}{2} \abs{\CC - \CC_{d}}^{2}, \\
& \text{ subject to } \varphi \text{ solving } \eqref{Intro:CH} \text{ and } (\PP, \chi, \CC) \in
\mathcal{U}_{\mathrm{ad}},
\end{aligned}
\end{equation}
where, for fixed positive constants $\PP_{\infty}$, $\chi_{\infty}$ and $\CC_{\infty}$, 
we define the admissible set of controls as
\begin{align*}
\mathcal{U}_{\mathrm{ad}} := \{ (\PP, \chi, \CC) \in \R^{3} : 0 \leq \PP \leq \PP_{\infty}, \; 0 \leq \chi \leq \chi_{\infty}, \; 0 \leq \CC \leq \CC_{\infty} \}.
\end{align*}
For the coming mathematical analysis, 
we set $\PP_{d} = \chi_{d} = \CC_{d} = 0$, as the computations 
for the original problem \eqref{prob:Min:P} and the shifted problem are similar.
  
The unique solvability of \eqref{Intro:CH} from Theorem \ref{thm:Exist} and Theorem \ref{thm:Ctsdep} allows us to define a solution operator $\mathcal{S}$ as
\begin{align*}
\mathcal{S}(\PP, \chi, \CC) = (\varphi, \mu, \sigma),
\end{align*}
where $(\varphi, \mu, \sigma)$ is the unique strong solution to \eqref{Intro:CH} corresponding to parameters $(\PP, \chi, \CC)$ and fixed initial data $(\varphi_{0}, \sigma_{0})$.  We use the notation $\mathcal{S}_{1}(\PP, \chi, \CC) = \varphi$ for the first component of $\mathcal{S}(\PP, \chi, \CC)$.

\begin{thm}\label{thm:mini}
Let $\varphi_{Q} \in L^{2}(Q)$ and $\varphi_{\Omega} \in L^{2}(\Omega)$.  Then, there exists at least one minimizer $(\PP_{*}, \chi_{*}, \CC_{*})$ to the optimization problem.  That is, $\varphi_{*} = \mathcal{S}_{1}(\PP_{*}, \chi_{*}, \CC_{*})$ with
\begin{align}\label{Minimizer}
J(\varphi_{*}, \PP_{*}, \chi_{*}, \CC_{*}) = \inf_{\substack{(a,b,c) \; \in \;
\mathcal{U}_{\mathrm{ad}} \\ \text{ s.t. } \phi \; = \; \mathcal{S}_{1}(a,b,c)}} J(\phi, a, b, c).
\end{align}
\end{thm}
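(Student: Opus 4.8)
The plan is to argue by the direct method of the calculus of variations, exploiting the compactness of the finite-dimensional admissible set $\mathcal{U}_{\mathrm{ad}}$ together with the continuity of the reduced objective $j(\PP,\chi,\CC) := J(\mathcal{S}_{1}(\PP,\chi,\CC),\PP,\chi,\CC)$. Since every summand of $J$ is non-negative, the infimum
\[
m := \inf_{(a,b,c)\in\mathcal{U}_{\mathrm{ad}}} j(a,b,c)
\]
is finite and non-negative, and $\mathcal{U}_{\mathrm{ad}}$ is non-empty (it contains the origin). First I would pick a minimizing sequence $\{(\PP_{n},\chi_{n},\CC_{n})\}\subset\mathcal{U}_{\mathrm{ad}}$ with $j(\PP_{n},\chi_{n},\CC_{n})\to m$, and denote by $(\varphi_{n},\mu_{n},\sigma_{n}) = \mathcal{S}(\PP_{n},\chi_{n},\CC_{n})$ the associated strong solutions furnished by Theorem \ref{thm:Exist}.

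Because $\mathcal{U}_{\mathrm{ad}} = [0,\PP_{\infty}]\times[0,\chi_{\infty}]\times[0,\CC_{\infty}]$ is a closed and bounded box in $\R^{3}$, it is compact, so after passing to a (non-relabelled) subsequence we have $(\PP_{n},\chi_{n},\CC_{n})\to(\PP_{*},\chi_{*},\CC_{*})\in\mathcal{U}_{\mathrm{ad}}$. Set $(\varphi_{*},\mu_{*},\sigma_{*}):=\mathcal{S}(\PP_{*},\chi_{*},\CC_{*})$, the state associated to the limiting parameters and the \emph{fixed} initial data $(\varphi_{0},\sigma_{0})$. Now I would invoke the continuous dependence estimate \eqref{Ctsdep:result} of Theorem \ref{thm:Ctsdep} with vanishing initial differences $\hat{\varphi}_{0} = \hat{\sigma}_{0} = 0$, which gives
\[
\norm{\varphi_{n} - \varphi_{*}}_{L^{\infty}(H^{2})\cap L^{2}(H^{4})} \leq C\bigl(\abs{\PP_{n}-\PP_{*}} + \abs{\chi_{n}-\chi_{*}} + \abs{\CC_{n}-\CC_{*}}\bigr)\longrightarrow 0 .
\]
Thus $\varphi_{n}\to\varphi_{*}$ strongly in $L^{\infty}(0,T;H^{2})\cap L^{2}(0,T;H^{4})$; in particular $\mathcal{S}_{1}$ is continuous on $\mathcal{U}_{\mathrm{ad}}$, so there is no need to pass to the limit in the nonlinear PDE terms by hand.

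It remains to pass to the limit in $J$. The strong convergence gives $\varphi_{n}\to\varphi_{*}$ in $L^{2}(Q)$, so the distributed term $\tfrac{\beta_{Q}}{2}\norm{\varphi_{n}-\varphi_{Q}}_{L^{2}(Q)}^{2}$ converges to its value at $\varphi_{*}$. For the terminal term I would use that, by the regularity of Theorem \ref{thm:Exist} and standard Bochner-space interpolation ($H^{2}=[L^{2},H^{3}]_{2/3}$ together with $\varphi\in L^{\infty}(H^{3})\cap H^{1}(L^{2})$), both $\varphi_{n}$ and $\varphi_{*}$ belong to $C^{0}([0,T];H^{2})$; hence the $L^{\infty}(0,T;H^{2})$ convergence is uniform in time and controls the trace at $t=T$, giving $\varphi_{n}(T)\to\varphi_{*}(T)$ in $H^{2}\hookrightarrow L^{2}$ and convergence of $\tfrac{\beta_{\Omega}}{2}\norm{\varphi_{n}(T)-\varphi_{\Omega}}_{L^{2}}^{2}$. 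Finally the three parameter penalties converge because the parameters converge. Therefore $j(\PP_{n},\chi_{n},\CC_{n})\to j(\PP_{*},\chi_{*},\CC_{*})=m$, so $(\PP_{*},\chi_{*},\CC_{*})$ attains the minimum and $\varphi_{*}=\mathcal{S}_{1}(\PP_{*},\chi_{*},\CC_{*})$ satisfies \eqref{Minimizer}.

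The argument is short precisely because the heavy lifting is done by Theorem \ref{thm:Ctsdep}; the only genuinely delicate point is the convergence of the terminal observation $\varphi(T)$, which is what forces the use of the \emph{strong} $L^{\infty}(0,T;H^{2})$ continuous dependence (together with temporal continuity) rather than a mere weak-$*$ compactness argument. Had one argued only from the uniform bounds \eqref{Bounds} of Theorem \ref{thm:Exist}, the main obstacle would instead be identifying the weak-$*$ limit of $(\varphi_{n},\mu_{n},\sigma_{n})$ as the state of the limiting parameters, i.e. passing to the limit in the nonlinearities $m(\varphi_{n})\nabla(\mu_{n}-\chi_{n}\sigma_{n})$, $f(\varphi_{n})g(\sigma_{n})$ and $h(\varphi_{n})\sigma_{n}$; this would require an Aubin--Lions strong compactness step, which the continuous dependence result lets us bypass.
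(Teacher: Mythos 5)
Your proof is correct, and within the shared direct-method framework it runs on a genuinely different engine than the paper's. The paper works on the state side: it takes a minimizing sequence, invokes the uniform bounds \eqref{Bounds} of Theorem \ref{thm:Exist} together with standard (Aubin--Lions type) compactness to obtain $\varphi_{n_j} \to \varphi_*$ strongly in $L^2(Q)\cap C^0([0,T];L^2)$, and concludes via weak lower semicontinuity of the $L^2(Q)$- and $L^2(\Omega)$-norms; the identification $\varphi_* = \mathcal{S}_1(\PP_*,\chi_*,\CC_*)$, which in that route requires passing to the limit in the nonlinearities (or an appeal to uniqueness), is left implicit in the sketch. You work on the parameter side: compactness of the box $\mathcal{U}_{\mathrm{ad}}\subset\R^3$ is trivial, and Theorem \ref{thm:Ctsdep} (applied with zero initial differences) upgrades to continuity of $\mathcal{S}_1$, so the constraint passes to the limit automatically and you obtain full convergence of the cost along the sequence rather than mere lower semicontinuity. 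Your handling of the terminal term is sound; in fact \eqref{Holder} already gives $\varphi_n-\varphi_*\in C^0([0,T];L^2)$, so the $L^\infty(0,T;H^2)$ estimate controls the value at $t=T$ without your interpolation detour through $C^0([0,T];H^2)$. What each approach buys: yours is cleaner and closes the identification gap that the paper's sketch glosses over, at the cost of leaning on the heavy continuous dependence theorem; in a careful write-up you should also note that the constants in \eqref{Bounds} and \eqref{Ctsdep:result} remain bounded as the parameters range over the compact set $\mathcal{U}_{\mathrm{ad}}$ (they may depend on the parameters, though not on the differences), which is what makes the estimate usable uniformly along the sequence. The paper's weak-compactness template is the more robust one, since compactness plus weak lower semicontinuity applies to models where one has only existence and a priori bounds but no Lipschitz dependence on the controls. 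A side benefit of your route: the paper's preliminary reduction to positive $\beta_\PP,\beta_\chi,\beta_\CC$ is unnecessary, because boundedness of the minimizing parameter sequence comes from $\mathcal{U}_{\mathrm{ad}}$ itself rather than from coercivity of the penalty terms.
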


\begin{proof}
The proof follows from standard application of the direct method.  We briefly sketch the details.  Let us remark that in the case where $\beta_{\PP}$ is zero, then we treat $\PP$ as a prescribed constant, redefine $\mathcal{U}_{\mathrm{ad}}$ as $\{ (\chi, \CC) \in \R^{2} : 0 \leq \chi \leq \chi_{\infty}, \; 0 \leq \CC \leq \CC_{\infty} \}$ and seek to minimize $J(\varphi, \chi, \CC)$.  Therefore, without loss of generality, in the subsequent analysis we will assume that $\beta_{\PP}$, $\beta_{\chi}$ and $\beta_{\CC}$ are positive.  Then as the functional $J$ is non-negative, this allows us to deduce the
existence of a minimizing sequence $\{\PP_{n}, \chi_{n}, \CC_{n}\}_{n \in \N} \subset \mathcal{U}_{\mathrm{ad}}$ with corresponding solution $\{(\varphi_{n}, \mu_{n}, \sigma_{n})\}_{n \in \N}$ to \eqref{Intro:CH} with fixed initial data $(\varphi_{0}, \sigma_{0})$ such that
\begin{align*}
\lim_{n \to \infty} J(\varphi_{n}, \PP_{n}, \chi_{n}, \CC_{n}) \quad = \inf_{\substack{(a,b,c) \; \in \; \mathcal{U}_{\mathrm{ad}} \\ \text{ s.t. } \phi \; = \; \mathcal{S}_{1}(a,b,c)}} J(\phi, a, b, c).
\end{align*}
By the definition of $\mathcal{U}_{\mathrm{ad}}$, the estimate \eqref{Bounds} and standard compactness results yield
\begin{align*}
\varphi_{n_{j}} & \to \varphi_{*} \text{ strongly in } L^{2}(Q) \cap C^{0}([0,T];L^{2}), \\
\PP_{n_{j}} & \to \PP_{*}, \quad \chi_{n_{j}} \to \chi_{*}, \quad \CC_{n_{j}} \to \CC_{*}
\end{align*}
along subsequences to a limit function $\varphi \in L^{\infty}(0,T;H^{3}) \cap L^{2}(0,T;H^{4}) \cap H^{1}(0,T,;L^{2})$ and limit parameters $(\PP_{*}, \chi_{*}, \CC_{*}) \in \mathcal{U}_{\mathrm{ad}}$.  Applying the weak lower semicontinuity of the $L^{2}(Q)$- and $L^{2}(\Omega)$-norms then leads to \eqref{Minimizer}.
\end{proof}

\section{Optimality conditions}\label{sec:Opt}
For a fixed triplet $(\PP, \chi, \CC) \in \mathcal{U}_{\mathrm{ad}}$ with 
corresponding strong solution $(\varphi, \mu, \sigma)$ to \eqref{Intro:CH}, 
let $\bm{u} := (u_{\PP}, u_{\chi}, u_{\CC}) \in \R^{3}$ be an arbitrary
 vector such that $(\PP_{u}, \chi_{u}, \CC_{u}) \in \mathcal{U}_{\mathrm{ad}}$ 
 where $\PP_{u} := \PP + u_{\PP}$, $\chi_{u} := \chi + u_{\chi}$ and $\CC_{u} := \CC + u_{\CC}$.  
 Denoting the unique strong solution to \eqref{Intro:CH} corresponding to the 
 parameters $(\PP_{u}, \chi_{u}, \CC_{u})$ as $(\varphi_{u}, \mu_{u}, \sigma_{u})$, 
 we now establish the Fr\'{e}chet differentiability of the 
 solution operator $\mathcal{S}$ with respect to $(\PP, \chi, \CC)$.  

\subsection{Solvability of the linearized state equations}
For fixed constants $u_{\PP}$, $u_{\chi}$ and $u_{\CC}$, we study the solvability of following linearized state equations for the variables $(\Phi_{u}, \Xi_{u}, \Sigma_{u})$:
\begin{subequations}\label{Lin:State}
\begin{alignat}{3}
\notag (\Phi_{u})_{t} & = \div ( m(\varphi) \nabla (\Xi_{u} - \chi \Sigma_{u} - u_{\chi} \nabla \sigma) + m'(\varphi) \Phi_{u} \nabla (\mu - \chi \sigma)) \\
& \quad + \PP ( g(\sigma) f'(\varphi) \Phi_{u} + f(\varphi) g'(\sigma) \Sigma_{u}) + u_{\PP} f(\varphi) g(\sigma) & \text{ in } Q, \label{Lin:1} \\
\Xi_{u} & = \beta \eps^{-1} \Psi''(\varphi) \Phi_{u} - \beta \eps \Laplace \Phi_{u}, & \text{ in } Q,  \label{Lin:2} \\
(\Sigma_{u})_{t} & = \Laplace \Sigma_{u} - \CC( h'(\varphi) \Phi_{u} \sigma + h(\varphi) \Sigma_{u}) - u_{\CC} h(\varphi) \sigma & \text{ in } Q,  \label{Lin:3} \\
0 & = \pdnu \Phi_{u} = \pdnu \Xi_{u} = \pdnu \Sigma_{u} & \text{ on } \Gamma, \\
\Phi_{u}(0) & = 0, \quad \Sigma_{u}(0) = 0 & \text{ in } \Omega.
\end{alignat}
\end{subequations}

\begin{thm}\label{thm:Lin:state}
For any $(u_{\PP}, u_{\chi}, u_{\CC}) \in \R^{3}$, there exists a unique triplet $(\Phi_{u}, \Xi_{u}, \Sigma_{u})$ with
\begin{align*}
\Phi_{u} & \in L^{\infty}(0,T;H^{1}(\Omega)) \cap L^{2}(0,T;H^{3}(\Omega) \cap H^{2}_{N}(\Omega)) \cap H^{1}(0,T;(H^{1}(\Omega))'), \\
\Xi_{u} & \in L^{2}(0,T;H^{1}(\Omega)), \\
\Sigma_{u} & \in L^{\infty}(0,T;H^{1}(\Omega)) \cap L^{2}(0,T;H^{2}_{N}(\Omega)) \cap H^{1}(0,T;L^{2}(\Omega)),
\end{align*}
satisfying $\Phi_{u}(0) = 0$, $\Sigma_{u}(0) = 0$ in $L^{2}(\Omega)$, \eqref{Lin:2}, \eqref{Lin:3} a.e. in $Q$,
\begin{equation}
  \label{Lin:1:weak}
\begin{aligned}
\inner{(\Phi_{u})_{t}}{\zeta}_{H^{1}} & = \int_{\Omega} - \left ( m(\varphi) \nabla (\Xi_{u} - \chi \Sigma_{u} - u_{\chi} \nabla \sigma) + m'(\varphi) \Phi_{u} \nabla (\mu - \chi \sigma) \right ) \cdot \nabla \zeta \dx \\
& \quad + \int_{\Omega} \left ( \PP ( g(\sigma) f'(\varphi) \Phi_{u} + f(\varphi) g'(\sigma) \Sigma_{u}) + u_{\PP} f(\varphi) g(\sigma) \right ) \zeta \dx
\end{aligned}
\end{equation}
for a.e. $t \in (0,T)$ and for all $\zeta \in H^{1}(\Omega)$.  Furthermore, there exists a positive constant $C$, not depending on $(\Phi_{u}, \Xi_{u}, \Sigma_{u}, u_{\PP}, u_{\chi}, u_{\CC})$ such that
\begin{equation}
\label{Bounds:Lin}
\begin{aligned}
& \norm{\Phi_{u}}_{L^{\infty}(0,T;H^{1}) \cap L^{2}(0,T;H^{3}) \cap H^{1}(0,T;(H^{1})')} + \norm{\Xi_{u}}_{L^{2}(0,T;H^{1})} \\
& \qquad + \norm{\Sigma_{u}}_{L^{\infty}(0,T;H^{1}) \cap L^{2}(0,T;H^{2}) \cap H^{1}(0,T;L^{2})} \\
& \quad \leq C \left ( \abs{u_{\PP}} + \abs{u_{\chi}} + \abs{u_{\CC}} \right ).
\end{aligned}
\end{equation}
\end{thm}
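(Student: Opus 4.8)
The plan is to construct solutions by a Galerkin approximation, establish a priori estimates uniform in the discretization parameter, pass to the limit, and finally deduce uniqueness from linearity. Since \eqref{Lin:State} is \emph{linear} in the unknowns $(\Phi_u,\Xi_u,\Sigma_u)$---all nonlinearities enter only through the fixed state $(\varphi,\mu,\sigma)$ supplied by Theorem \ref{thm:Exist}---the only genuinely delicate part is deriving the a priori bounds; once these are in hand the remaining steps are routine. First I would set up the Galerkin scheme using the eigenfunctions $\{w_k\}$ of the Neumann Laplacian, which form an orthonormal basis of $L^2$, lie in $H^2_N(\Omega)$, and respect the boundary conditions. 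Substituting \eqref{Lin:2} to eliminate $\Xi_u$ reduces the problem to a fourth-order equation for $\Phi_u$ coupled to the second-order equation \eqref{Lin:3} for $\Sigma_u$; projecting onto $\mathrm{span}\{w_1,\dots,w_n\}$ yields a linear system of ODEs whose time-dependent coefficients are integrable (thanks to the regularity of $\varphi,\mu,\sigma$), so Carath\'eodory theory provides a unique approximate solution on $[0,T]$.

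The core of the argument is the a priori estimate, which I would carry out in the spirit of the continuous dependence proof of Theorem \ref{thm:Ctsdep}. Testing \eqref{Lin:3} with $\Sigma_u$ and using the boundedness of $h,h'$ together with $\sigma \in L^{\infty}(Q)$ controls $\norm{\Sigma_u}_{L^{\infty}(L^2) \cap L^2(H^1)}$ in terms of $\norm{\Phi_u}_{L^2}$ and $\abs{u_{\CC}}$. The Cahn--Hilliard energy estimate then comes from testing \eqref{Lin:1} with $\Xi_u$ and \eqref{Lin:2} with $(\Phi_u)_t$ and adding: the mobility term produces the dissipation $n_0 \norm{\nabla \Xi_u}_{L^2}^2$, while $\inner{(\Phi_u)_t}{\Xi_u}{}$ yields $\tfrac{\beta \eps}{2} \tfrac{\dd}{\dt} \norm{\nabla \Phi_u}_{L^2}^2$ together with the term $\beta \eps^{-1} \int_{\Omega} \Psi''(\varphi) \Phi_u (\Phi_u)_t \dx$. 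Integrating \eqref{Lin:1} over $\Omega$ gives an ODE for the mean $\mean{\Phi_u}$ driven by $u_{\PP}$ and the reaction terms, so that the Poincar\'e inequality upgrades $\nabla \Phi_u$ control to full $H^1$ control. Combining with the $\Sigma_u$ bound and a Gronwall argument gives $\Phi_u \in L^{\infty}(H^1)$ and $\Xi_u \in L^2(H^1)$, the mean of $\Xi_u$ being recovered from \eqref{Lin:2}.

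Higher regularity then follows by bootstrapping. Viewing \eqref{Lin:2} as $\beta \eps \Laplace \Phi_u = \beta \eps^{-1} \Psi''(\varphi) \Phi_u - \Xi_u$ and using $\nabla \varphi \in L^{\infty}(Q)$ gives $\Psi''(\varphi) \Phi_u \in L^2(H^1)$, whence $\Phi_u \in L^2(H^3)$ by elliptic regularity. Parabolic regularity for \eqref{Lin:3}, testing with $-\Laplace \Sigma_u$ and exploiting $\Phi_u \in L^{\infty}(H^1) \subset L^{\infty}(L^p)$ for all finite $p$ in two dimensions, yields $\Sigma_u \in L^{\infty}(H^1) \cap L^2(H^2) \cap H^1(L^2)$, and duality in \eqref{Lin:1:weak} gives $(\Phi_u)_t \in L^2((H^1)')$. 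Because the system is linear, weak and weak-$*$ convergence of the Galerkin approximations suffices to identify all limits---there are no products of unknowns requiring strong convergence, though an Aubin--Lions argument secures the initial conditions. The estimate \eqref{Bounds:Lin} then follows from weak lower semicontinuity of the norms, and uniqueness is immediate: the difference of two solutions solves the homogeneous system ($u_{\PP} = u_{\chi} = u_{\CC} = 0$), and the energy estimate with Gronwall forces it to vanish.

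The hardest part will be closing the energy estimate, specifically controlling the variable-mobility cross-term $\int_{\Omega} m'(\varphi) \Phi_u \nabla(\mu - \chi \sigma) \cdot \nabla \Xi_u \dx$ and the term $\beta \eps^{-1} \int_{\Omega} \Psi''(\varphi) \Phi_u (\Phi_u)_t \dx$. The former requires $\nabla \mu, \nabla \sigma \in L^4(Q)$ and the Gagliardo--Nirenberg inequality \eqref{GN:alt} to bound $\norm{\Phi_u}_{L^4}$ and absorb $\norm{\nabla \Xi_u}_{L^2}$ into the dissipation; the latter I would rewrite as $\tfrac12 \tfrac{\dd}{\dt} \int_{\Omega} \Psi''(\varphi) \Phi_u^2 \dx - \tfrac12 \int_{\Omega} \Psi'''(\varphi) \varphi_t \Phi_u^2 \dx$, where the boundedness of $\Psi'', \Psi'''$ on the range of the bounded function $\varphi$ and $\varphi_t \in L^2(L^2)$ render the remainder Gronwall-integrable. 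As in Theorem \ref{thm:Ctsdep}, both manipulations rely essentially on the high regularity of the state established in Theorem \ref{thm:Exist}.
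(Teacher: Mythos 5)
Your overall skeleton (Galerkin approximation with Neumann--Laplacian eigenfunctions, a priori estimates, weak limit passage, uniqueness by linearity) agrees with the paper, and several ingredients coincide: the Gagliardo--Nirenberg treatment of the variable-mobility cross term, the $\Sigma_{u}$ estimates, the duality bound for $(\Phi_{u})_{t}$ in $(H^{1})'$, and the elliptic bootstrap to $L^{2}(0,T;H^{3})$. The gap is in your core energy estimate. You propose to handle $\beta \eps^{-1} \int_{\Omega} \Psi''(\varphi) \Phi_{u} (\Phi_{u})_{t} \dx$ by the chain rule in time, placing $\tfrac{\beta \eps^{-1}}{2} \int_{\Omega} \Psi''(\varphi) \abs{\Phi_{u}}^{2} \dx$ inside the time derivative, and then to run Gronwall on $\tfrac{\beta\eps}{2}\norm{\nabla \Phi_{u}}_{L^{2}}^{2} + \tfrac{\beta\eps^{-1}}{2}\int_{\Omega}\Psi''(\varphi)\abs{\Phi_{u}}^{2}\dx$, recovering the $L^{2}$-part of the $H^{1}$ norm from the mean-value ODE plus Poincar\'{e}. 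But $\Psi''$ is indefinite (it is negative where $\abs{\varphi}$ is small, for any double-well potential), so your Gronwall functional is bounded below only by $\tfrac{\beta\eps}{2}\norm{\nabla\Phi_{u}}_{L^{2}}^{2} - C_{0}\beta\eps^{-1}\norm{\Phi_{u}}_{L^{2}}^{2}$. Poincar\'{e}--Wirtinger converts the negative term into $-C_{0}\beta\eps^{-1}C_{P}^{2}\norm{\nabla\Phi_{u}}_{L^{2}}^{2} - C\abs{\mean{\Phi_{u}}}^{2}$, and since nothing makes $C_{0}C_{P}^{2}$ small compared with $\eps^{2}$ (there is no smallness anywhere in the assumptions), the functional need not dominate $c\norm{\Phi_{u}}_{H^{1}}^{2}$ no matter how large a multiple of $\abs{\mean{\Phi_{u}}}^{2}$ and $\norm{\Sigma_{u}}_{L^{2}}^{2}$ you add. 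Consequently neither half of the Gronwall argument closes: you can neither bound right-hand side terms such as $\norm{\varphi_{t}}_{L^{2}}\norm{\Phi_{u}}_{H^{1}}^{2}$ by the functional, nor extract the $L^{\infty}(0,T;H^{1})$ bound at the end. Control of the constant mode $\mean{\Phi_{u}}$ is not a substitute for control of $\norm{\Phi_{u}}_{L^{2}}$.

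The missing ingredient is precisely the paper's first estimate, which you skipped: before any $(\Phi_{u})_{t}$ testing, test \eqref{Lin:1} with $\Phi_{u}$, \eqref{Lin:2} with $-\Laplace \Phi_{u}$ and with $m(\varphi)\Xi_{u}$, and \eqref{Lin:3} with $\Sigma_{u}$. In that combination the $\Psi''$ term is paired with the dissipation $\beta\eps\norm{\Laplace\Phi_{u}}_{L^{2}}^{2}$, where its sign is irrelevant (Young's inequality), and one obtains $\Phi_{u} \in L^{\infty}(0,T;L^{2}) \cap L^{2}(0,T;H^{2})$, $\Xi_{u} \in L^{2}(Q)$, $\Sigma_{u} \in L^{\infty}(0,T;L^{2}) \cap L^{2}(0,T;H^{1})$. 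With these in hand, the paper's version of your energy estimate (its third estimate) avoids the chain rule altogether: it bounds the troublesome term by duality, $\abs{\int_{\Omega}\Psi''(\varphi)\Phi_{u}(\Phi_{u})_{t}\dx} \leq \norm{\Psi''(\varphi)\Phi_{u}}_{H^{1}}\norm{(\Phi_{u})_{t}}_{(H^{1})'}$, substitutes $\norm{(\Phi_{u})_{t}}_{(H^{1})'} \leq n_{1}\norm{\nabla\Xi_{u}}_{L^{2}} + \dots$, and absorbs the result into the dissipation; every remaining right-hand side term is then already integrable in time by the first two estimates, so no coercivity of an indefinite energy is ever invoked. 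Alternatively, you could keep your chain-rule computation, but you must then also test \eqref{Lin:1} with $\Lambda\Phi_{u}$ for $\Lambda$ large and add the result, so that $\Lambda\norm{\Phi_{u}}_{L^{2}}^{2}$ restores coercivity of the Gronwall functional; either repair amounts to reinstating the $L^{2}$-level estimate your sketch omits, and the same correction is needed for your uniqueness argument, which relies on the identical testing.
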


\begin{proof}
It suffices to derive the a priori estimates that are necessary for a Galerkin procedure.  In the following the symbol $C$ will denote positive constants not depending on $(\Phi_{u}, \Xi_{u}, \Sigma_{u}, u_{\PP}, u_{\chi}, u_{\CC})$ and may vary from line to line.

\paragraph{First estimate.} 
Let $D, F$ be  positive constants yet to be determined.  
Testing \eqref{Lin:1} with $D \beta \eps \Phi_{u}$, \eqref{Lin:2} 
with $-\Laplace \Phi_{u}$ and also with $D m(\varphi) \Xi_{u}$, and \eqref{Lin:3} with $F \Sigma_{u}$, 
then we obtain after summing up the resulting equalities
\begin{align*}
& \frac{\dd}{\dt} \frac{1}{2} \left ( D \beta \eps \norm{\Phi_{u}}_{L^{2}}^{2} + F \norm{\Sigma_{u}}_{L^{2}}^{2} \right ) + \beta \eps \norm{\Laplace \Phi_{u}}_{L^{2}}^{2}  \\
& \qquad + F \norm{\nabla \Sigma_{u}}_{L^{2}}^{2} + D \norm{m^{\frac{1}{2}}(\varphi) \Xi_{u}}_{L^{2}}^{2} + F \CC \norm{h^{\frac{1}{2}}(\varphi) \Sigma_{u}}_{L^{2}}^{2} \\
& \quad = \int_{\Omega} - F \CC h'(\varphi) \sigma \Phi_{u} \Sigma_{u} - F u_{\CC} h(\varphi) \sigma \Sigma_{u} + \beta \eps^{-1} \Psi''(\varphi) \Phi_{u} \Laplace \Phi_{u} - \Xi_{u} \Laplace \Phi_{u} \dx \\
& \qquad + D \int_{\Omega}  m(\varphi) \beta \eps^{-1} \Psi''(\varphi) \Phi_{u} \Xi_{u} +  \beta \eps m'(\varphi) \Xi_{u}  \nabla \varphi \cdot \nabla \Phi_{u} + \beta \eps m(\varphi) \chi \nabla \Sigma_{u} \cdot \nabla \Phi_{u} \dx \\
& \qquad + D \beta \eps \int_{\Omega} u_{\chi} m(\varphi) \nabla \sigma \cdot \nabla \Phi_{u} - m'(\varphi) \Phi_{u} \nabla (\mu - \chi \sigma) \cdot \nabla \Phi_{u} \dx \\
& \qquad + D \beta \eps \int_{\Omega} \PP g(\sigma) f'(\varphi) \abs{\Phi_{u}}^{2} + \PP f(\varphi) g'(\sigma) \Sigma_{u} \Phi_{u} + u_{\PP} f(\varphi) g(\sigma) \Phi_{u} \dx \\
& \quad =: J_{1} + J_{2} + \dots + J_{12}.
\end{align*}
At this point let us mention a slight technicality.  Consider the eigenfunctions of the Neumann-Laplacian as a basis for a Galerkin approximation, and denoting as $W_{n}$ the finite dimensional subspace of $H^{1}(\Omega)$ spanned by the first $n$ basis functions with the corresponding projection operator $\Pi_{n}$.  Let $\Phi_{u,n}$, $\Xi_{u,n}$ denote the Galerkin approximation of $\Phi_{u}$ and $\Xi_{u}$, respectively, which are finite linear combinations of the basis functions.  Then, to arrive at the above equality for the Galerkin approximations, one should test \eqref{Lin:2} with $D \Pi_{n}(m(\varphi) \Xi_{u,n})$.  Since $\Laplace \Phi_{u,n} \in W_{n}$, it holds that $\Pi_{n}(\Laplace \Phi_{u,n}) = \Laplace \Phi_{u,n}$ and so
\begin{align*}
\int_{\Omega} \beta \eps \Laplace \Phi_{u,n} \Pi_{n}( m(\varphi) \Xi_{u,n}) \dx = \int_{\Omega} \beta \eps \Laplace \Phi_{u,n} m(\varphi) \Xi_{u,n} \dx.
\end{align*}

Let us recall that from Theorem \ref{thm:Exist}, $\varphi$ and $\nabla \varphi$ are bounded a.e. in $Q$, and so $h$, $f$ and $\Psi'$, and their first derivatives are bounded a.e. in $Q$.  We now estimate the terms $J_{1}, \dots, J_{12}$ in the following way:  
\begin{align*}
J_{1} + J_{2} & \leq C F \norm{\sigma}_{L^{\infty}} \norm{\Phi_{u}}_{L^{2}} \norm{\Sigma_{u}}_{L^{2}} + C\abs{u_{\CC}} \norm{\sigma}_{L^{2}} \norm{\Sigma_{u}}_{L^{2}} \\
& \leq C F \left (\norm{\sigma}_{H^{2}}^{2} \norm{\Sigma_{u}}_{L^{2}}^{2} + \norm{\Phi_{u}}_{L^{2}}^{2} + \abs{u_{\CC}}^{2} \right ),  \\
J_{3} + J_{4} & \leq C \norm{\Phi_{u}}_{L^{2}}^{2} + \frac{\beta \eps}{2} \norm{\Laplace \Phi_{u}}_{L^{2}}^{2} + (\beta \eps)^{-1} \norm{\Xi_{u}}_{L^{2}}^{2}, \\
J_{5} + J_{6} & \leq DC \norm{\Phi_{u}}_{L^{2}}^{2} + \frac{D n_{0}}{2} \norm{\Xi_{u}}_{L^{2}}^{2} + D C \norm{\nabla \Phi_{u}}_{L^{2}}^{2}, \\
J_{7} + J_{8} & \leq DC \left (\norm{\nabla \Sigma_{u}}_{L^{2}}^{2} + \norm{\nabla \Phi_{u}}_{L^{2}}^{2} + \abs{u_{\chi}}^{2} \norm{\nabla \sigma}_{L^{2}}^{2} \right ), \\
J_{9} & \leq DC \norm{\nabla (\mu - \chi \sigma)}_{L^{4}} \norm{\nabla \Phi_{u}}_{L^{2}} \left (\norm{\Phi_{u}}_{L^{2}}^{\frac{1}{2}} \norm{\nabla \Phi_{u}}_{L^{2}}^{\frac{1}{2}} + \norm{\Phi_{u}}_{L^{2}} \right ) \\
& \leq DC \left ( 1 + \norm{\nabla (\mu - \chi \sigma)}_{L^{4}}^{4} \norm{\Phi_{u}}_{L^{2}}^{2}
\right ) + \norm{\nabla \Phi_{u}}_{L^{2}}^{2}, \\
J_{10} + J_{11} + J_{12} & \leq DC \left ( \norm{\Phi_{u}}_{L^{2}}^{2} + \norm{\Sigma_{u}}_{L^{2}}^{2} + \abs{u_{\PP}}^{2} \right ).
\end{align*}
Collecting the terms we obtain
\begin{align*}
& \frac{1}{2} \frac{\dd}{\dt} \left ( D \beta \eps \norm{\Phi_{u}}_{L^{2}}^{2} + \norm{\Sigma_{u}}_{L^{2}}^{2} \right ) - C \left ( 1 + \norm{\sigma}_{H^{2}}^{2} + D\norm{\nabla (\mu - \chi \sigma)}_{L^{4}}^{4} \right ) \left ( \norm{\Phi_{u}}_{L^{2}}^{2} + \norm{\Sigma_{u}}_{L^{2}}^{2} \right ) \\
& \quad + \frac{\beta \eps}{2} \norm{\Laplace \Phi_{u}}_{L^{2}}^{2} +  \left ( \frac{Dn_{0}}{2} - (\beta \eps)^{-1} \right ) \norm{\Xi_{u}}_{L^{2}}^{2} + \left ( F - DC \right ) \norm{\nabla \Sigma_{u}}_{L^{2}}^{2} \\
& \quad \leq (1 + DC) \norm{\nabla \Phi_{u}}_{L^{2}}^{2} + DC \left ( \norm{\nabla \sigma}_{L^{\infty}(L^{2})}^{2} \abs{u_{\chi}}^{2} + \abs{u_{\PP}}^{2} + \abs{u_{\CC}}^{2} \right ).
\end{align*}
Choosing $D > \frac{2}{n_{0} \beta \eps }$ and then $F > DC$ so that upon using the inequality
\begin{align}\label{nablaL2:Laplace}
\norm{\nabla \Phi}_{L^{2}}^{2} = \int_{\Omega} \Phi \Laplace \Phi \dx \leq \norm{\Phi}_{L^{2}} \norm{\Laplace \Phi}_{L^{2}},
\end{align}
we obtain
\begin{align*}
& \frac{\dd}{\dt} \left ( \norm{\Phi_{u}}_{L^{2}}^{2} + \norm{\Sigma_{u}}_{L^{2}}^{2} \right ) + \norm{\Laplace \Phi_{u}}_{L^{2}}^{2} + \norm{\Xi_{u}}_{L^{2}}^{2} + \norm{\nabla \Sigma_{u}}_{L^{2}}^{2} \\
& \quad \leq C \left ( 1 + \norm{\sigma}_{H^{2}}^{2} + \norm{\nabla (\mu - \chi \sigma)}_{L^{4}}^{4} \right ) \left ( \norm{\Phi_{u}}_{L^{2}}^{2} + \norm{\Sigma_{u}}_{L^{2}}^{2} \right )  + C \left ( \abs{u_{\chi}}^{2} + \abs{u_{\PP}}^{2} + \abs{u_{\CC}}^{2} \right ).
\end{align*}
Applying a Gronwall argument and recalling that $\sigma \in L^{2}(0,T;H^{2})$, $\nabla (\mu - \chi \sigma) \in L^{4}(Q)$ yields 
\begin{equation}\label{Apri:lin:1}
\begin{aligned}
& \norm{\Phi_{u}}_{L^{\infty}(0,T;L^{2}) \cap L^{2}(0,T;H^{2})} + \norm{\Sigma_{u}}_{L^{\infty}(0,T;L^{2}) \cap L^{2}(0,T;H^{1})} + \norm{\Xi_{u}}_{L^{2}(Q)} \\
& \quad \leq C \left ( \abs{u_{\PP}} + \abs{u_{\chi}} + \abs{u_{\CC}} \right ).
\end{aligned}
\end{equation}

\paragraph{Second estimate.} Testing \eqref{Lin:2} with $(\Sigma_{u})_{t}$ gives
\begin{align*}
\frac{\dd}{\dt} \frac{1}{2} \norm{\nabla \Sigma_{u}}_{L^{2}}^{2} + \norm{(\Sigma_{u})_{t}}_{L^{2}}^{2} \leq C \norm{(\Sigma_{u})_{t}}_{L^{2}} \left ( \norm{\sigma}_{L^{\infty}(L^{4})} \norm{\Phi_{u}}_{L^{4}} + \norm{\Sigma_{u}}_{L^{2}} + \abs{u_{\CC}} \norm{\sigma}_{L^{\infty}(L^{2})} \right ).
\end{align*}
Applying Young's inequality and using \eqref{Apri:lin:1} yields that $\Sigma_{u}$ is bounded in $L^{\infty}(0,T;H^{1}) \cap H^{1}(0,T;L^{2})$.  Then, viewing \eqref{Lin:3} as an elliptic equation with right-hand side belonging to $L^{2}(Q)$, we obtain altogether
\begin{align}\label{Apri:lin:2}
\norm{\Sigma_{u}}_{L^{\infty}(0,T;H^{1}) \cap L^{2}(0,T;H^{2}) \cap H^{1}(0,T;L^{2})} \leq C \left ( \abs{u_{\PP}} + \abs{u_{\chi}} + \abs{u_{\CC}} \right ).
\end{align}

\paragraph{Third estimate.}  Testing \eqref{Lin:1} with an arbitrary test function $\zeta \in L^{2}(0,T;H^{1})$ yields that
\begin{equation}\label{pdt:Phi:u}
\begin{aligned}
\norm{(\Phi_{u})_{t}}_{L^{2}(0,T;(H^{1})')} & \leq n_{1} \norm{\nabla \Xi_{u}}_{L^{2}(Q)} + n_{1} \chi \norm{\nabla \Sigma_{u}}_{L^{2}(Q)} + \abs{u_{\chi}} n_{1} \norm{\nabla \sigma}_{L^{2}(Q)} \\
& \quad + C \norm{\Phi_{u}}_{L^{2}(L^{\infty})} \norm{\nabla (\mu - \chi \sigma)}_{L^{\infty}(L^{2})} \\
& \quad + C \left ( \norm{\Phi_{u}}_{L^{2}(Q)} + \norm{\Sigma_{u}}_{L^{2}(Q)} + \abs{u_{\PP}} \right ).
\end{aligned}
\end{equation}
Then, upon testing \eqref{Lin:1} with $\Xi_{u}$ and \eqref{Lin:2} with $-(\Phi_{u})_{t}$ leads to
\begin{align*}
& \frac{\dd}{\dt} \frac{\beta \eps}{2} \norm{\nabla \Phi_{u}}_{L^{2}}^{2} + \norm{m^{\frac{1}{2}}(\varphi) \nabla \Xi_{u}}_{L^{2}}^{2} \\
& \quad = \int_{\Omega} m(\varphi) \left ( \chi \nabla \Sigma_{u} + u_{\chi} \nabla \sigma \right ) \cdot \nabla \Xi_{u} - m'(\varphi) \Phi_{u} \nabla (\mu - \chi \sigma) \cdot \nabla \Xi_{u} \dx \\
& \qquad + \int_{\Omega} \PP (g(\sigma) f'(\varphi) \Phi_{u} + f(\varphi) g'(\sigma) \Sigma_{u} ) \Xi_{u} + u_{\PP} f(\varphi) g(\sigma) \Xi_{u} \dx \\
& \qquad - \int_{\Omega} \beta \eps^{-1} \Psi''(\varphi) \Phi_{u} (\Phi_{u})_{t} \dx \\
& = : K_{1} + K_{2} + K_{3}.
\end{align*}
Thanks to the fact that $\mu, \sigma \in L^{\infty}(0,T;H^{1})$, in applying the estimate \eqref{pdt:Phi:u} we have
\begin{align*}
K_{1} & \leq C \left ( \norm{\nabla \Sigma_{u}}_{L^{2}}^{2} + \abs{u_{\chi}}^{2} \norm{\nabla \sigma}_{L^{2}}^{2} + \norm{\nabla (\mu - \chi \sigma)}_{L^{\infty}(L^{2})}^{2} \norm{\Phi_{u}}_{L^{\infty}}^{2} \right ) + \frac{n_{0}}{4} \norm{\nabla \Xi_{u}}_{L^{2}}^{2}, \\
K_{2} & \leq C \left ( \norm{\Phi_{u}}_{L^{2}}^{2} + \norm{\Sigma_{u}}_{L^{2}}^{2} + \norm{\Xi_{u}}_{L^{2}}^{2} + \abs{u_{\PP}}^{2} \right ), \\
K_{3} & \leq C \left ( \norm{\Psi''(\varphi)}_{L^{\infty}(L^{\infty})}\norm{\Phi_{u}}_{H^{1}} + \norm{\Psi'''(\varphi)}_{L^{\infty}(L^{\infty})} \norm{\nabla \varphi}_{L^{\infty}(L^{3})} \norm{\Phi_{u}}_{L^{6}} \right ) \norm{(\Phi_{u})_{t}}_{(H^{1})'} \\
& \leq C \left ( \norm{\Phi_{u}}_{H^{1}}^{2} + \norm{ \Sigma_{u}}_{H^{1}}^{2} + \abs{u_{\chi}}^{2} \norm{\nabla \sigma}_{L^{2}}^{2} \right) + \frac{n_{0}}{4} \norm{\nabla \Xi_{u}}_{L^{2}}^{2}\\
& \quad + C \left ( \norm{\Phi_{u}}_{L^{\infty}}^{2} \norm{\nabla (\mu - \chi \sigma)}_{L^{\infty}(L^{2})}^{2} + \abs{u_{\PP}}^{2} \right ) .
\end{align*}
Collecting the terms yields the differential inequality
\begin{align*}
\frac{\dd}{\dt} \norm{\nabla \Phi_{u}}_{L^{2}}^{2} + \norm{\nabla \Xi_{u}}_{L^{2}}^{2} \leq C \left ( \norm{\Sigma_{u}}_{H^{1}}^{2} + \norm{\Phi_{u}}_{H^{2}}^{2} +  \norm{\Xi_{u}}_{L^{2}}^{2} + \abs{u_{\PP}}^{2} + \abs{u_{\chi}}^{2} \right ),
\end{align*}
and a Gronwall argument with \eqref{Apri:lin:1}, \eqref{Apri:lin:2}, whilst keeping in mind \eqref{pdt:Phi:u} leads to
\begin{align}
\label{Apri:lin:3}
\norm{\Phi_{u}}_{L^{\infty}(0,T;H^{1}) \cap H^{1}(0,T;(H^{1})')} + \norm{\Xi_{u}}_{L^{2}(0,T;H^{1})}  \leq C \left ( \abs{u_{\PP}} + \abs{u_{\chi}} + \abs{u_{\CC}} \right ).
\end{align}

\paragraph{Fourth estimate.} Viewing \eqref{Lin:2} as an elliptic equation for $\Phi_{u}$ and observing that the right-hand side now belongs to $L^{2}(0,T;H^{1})$ (thanks to \eqref{Apri:lin:3} and the boundedness of $\nabla \varphi$ a.e. in $Q$) it holds that
\begin{align}
\label{Apri:lin:4}
\norm{\Phi_{u}}_{L^{2}(0,T;H^{3})} \leq C\left ( \abs{u_{\PP}} + \abs{u_{\chi}} + \abs{u_{\CC}} \right ).
\end{align}

\paragraph{Uniqueness.} As \eqref{Lin:State} is a system of equations that is linear in $(\Phi_{u}, \Xi_{u}, \Sigma_{u})$, it suffices to show that $\Phi_{u} = \Xi_{u} = \Sigma_{u} = 0$ when $u_{\PP} = u_{\chi} = u_{\CC} = 0$.  Thanks to the regularities stated in Theorem \ref{thm:Lin:state}, the testing procedures to derive \eqref{Apri:lin:1} remain valid.  Then, substituting $u_{\PP} = u_{\chi} = u_{\CC} = 0$ yields that $\Phi_{u} = \Xi_{u} = \Sigma_{u} = 0$ a.e. in $Q$.
\end{proof}

\subsection{Fr\'{e}chet differentiability of the control-to-state map}\label{sec:Fdiff}

\begin{thm}\label{thm:Fdiff}
Under Assumption \ref{assump:Wellposed}, for any $(u_{\PP}, u_{\chi}, u_{\CC}) \in \R^{3}$ such that $(\PP_{u}, \chi_{u}, \CC_{u}) \in \mathcal{U}_{\mathrm{ad}}$, there exists a positive constant $C$ not depending on $(u_{\PP}, u_{\chi}, u_{\CC})$ such that
\begin{align*}
\norm{(\theta_{u}, \rho_{u}, \xi_{u})}_{\mathcal{Y}} \leq C \left ( \abs{u_{\PP}}^{2} + \abs{u_{\chi}}^{2} + \abs{u_{\CC}}^{2} \right ),
\end{align*}
where $\theta_{u} := \varphi_{u} - \varphi - \Phi_{u}$, $\rho_{u} := \mu_{u} - \mu - \Xi_{u}$, $\xi_{u} := \sigma_{u} - \sigma - \Sigma_{u}$, and $\mathcal{Y}$ is the product Banach space
\begin{align*}
\mathcal{Y} & := \left [ L^{2}(0,T;H^{2}(\Omega)) \cap H^{1}(0,T;(H^{2}_{N}(\Omega))') \cap C^{0}([0,T];L^{2}(\Omega)) \right ] \\
& \quad \times L^{2}(Q) \times \left [ L^{2}(0,T;H^{2}(\Omega)) \cap L^{\infty}(0,T;H^{1}(\Omega)) \cap H^{1}(0,T;L^{2}(\Omega)) \right ].
\end{align*}
In particular, the solution operator $\mathcal{S} : \R^{3} \to \mathcal{Y}$ is Fr\'{e}chet differentiable.
\end{thm}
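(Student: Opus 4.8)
The plan is to show that the remainder triple $(\theta_u, \rho_u, \xi_u)$ solves a system of the same structure as the linearized system \eqref{Lin:State}, but with no first-order forcing in $(u_\PP, u_\chi, u_\CC)$ and with right-hand sides that are quadratic in the increment. First I would subtract the linearized equations \eqref{Lin:1}--\eqref{Lin:3} from the difference of the state equations for $(\varphi_u, \mu_u, \sigma_u)$ and $(\varphi, \mu, \sigma)$. Writing $\delta\varphi := \varphi_u - \varphi$ and likewise $\delta\mu$, $\delta\sigma$, and Taylor expanding each nonlinearity ($m$, $m'$, $f$, $g$, $h$, $\Psi'$, $\Psi''$) about the unperturbed state, every first-order term is matched exactly by the corresponding term of the linearization, so the leading contributions reassemble into the operators of \eqref{Lin:State} acting on $(\theta_u, \rho_u, \xi_u)$. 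For instance, the chemical potential relation becomes
\begin{align*}
\rho_u = \beta \eps^{-1} \Psi''(\varphi) \theta_u - \beta \eps \Laplace \theta_u + \mathcal{R}_\mu, \qquad \mathcal{R}_\mu := \beta \eps^{-1} \left ( \Psi'(\varphi_u) - \Psi'(\varphi) - \Psi''(\varphi) \delta\varphi \right ),
\end{align*}
with $\mathcal{R}_\mu$ a pure Taylor remainder, while the tumour and nutrient equations reproduce the operators of \eqref{Lin:1} and \eqref{Lin:3} acting on $(\theta_u, \rho_u, \xi_u)$ plus remainders $\mathcal{R}_\varphi$, $\mathcal{R}_\sigma$ collecting all products of two increments, such as $m''(\varphi)(\delta\varphi)^2 \nabla(\mu - \chi\sigma)$, $m'(\varphi)\delta\varphi \nabla\delta\mu$, $u_\chi \nabla\delta\sigma$, $u_\PP(f(\varphi_u)g(\sigma_u) - f(\varphi)g(\sigma))$ and the Taylor remainders of $f$, $g$, $h$. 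The initial data vanish, since $\varphi_u(0) = \varphi(0)$, $\sigma_u(0) = \sigma(0)$ and $\Phi_u(0) = \Sigma_u(0) = 0$.

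The second step is to bound each remainder by $C(\abs{u_\PP}^2 + \abs{u_\chi}^2 + \abs{u_\CC}^2)$ in the norm in which it will be used, and this is exactly where the high-order continuous dependence of Theorem \ref{thm:Ctsdep} is indispensable: estimate \eqref{Ctsdep:result} gives $\delta\varphi \in L^\infty(H^2) \cap L^2(H^4)$, $\delta\mu \in L^\infty(L^2) \cap L^2(H^2)$ and $\delta\sigma \in L^\infty(H^1) \cap L^2(H^2)$ with norms bounded by $\abs{u_\PP} + \abs{u_\chi} + \abs{u_\CC}$, while \eqref{Bounds} supplies $\nabla(\mu - \chi\sigma) \in L^4(Q)$ and $\nabla\varphi \in L^\infty(L^\infty)$. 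Using the two-dimensional embedding $H^2 \hookrightarrow L^\infty$, the mobility remainders are controlled in $L^2(Q)$ by, e.g., $\norm{m''(\varphi)(\delta\varphi)^2 \nabla(\mu - \chi\sigma)}_{L^2(Q)} \leq C \norm{\delta\varphi}_{L^\infty(L^\infty)}^2 \norm{\nabla(\mu - \chi\sigma)}_{L^2(Q)}$ and $\norm{m'(\varphi)\delta\varphi \nabla\delta\mu}_{L^2(Q)} \leq C \norm{\delta\varphi}_{L^\infty(L^\infty)} \norm{\nabla\delta\mu}_{L^2(Q)}$, each a product of two factors of order $\abs{\bm{u}}$; the reaction and nutrient remainders are handled analogously.

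The third step is to run the energy estimates on the remainder system, mirroring the First and Second estimates of Theorem \ref{thm:Lin:state} with the quadratic $\mathcal{R}$-terms playing the role of the forcing. Testing the $\theta_u$-equation with $D \beta \eps \theta_u$, the $\rho_u$-relation with $-\Laplace \theta_u$ and with $D m(\varphi) \rho_u$, and the $\xi_u$-equation with $F \xi_u$ (using the Galerkin projection device of Theorem \ref{thm:Lin:state} to make the variable-mobility cancellation rigorous), then absorbing the mobility term via \eqref{nablaL2:Laplace} and treating the state-dependent coefficient $\norm{\nabla(\mu - \chi\sigma)}_{L^4}^4$ through a Gronwall argument, yields $\theta_u \in L^\infty(L^2) \cap L^2(H^2)$, $\rho_u \in L^2(Q)$ and $\xi_u \in L^\infty(L^2) \cap L^2(H^1)$, each bounded by $C(\abs{u_\PP}^2 + \abs{u_\chi}^2 + \abs{u_\CC}^2)$; note that no $H^4$-level test of $\theta_u$ is needed, so the strong control from Theorem \ref{thm:Ctsdep} is spent entirely on the remainders. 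Testing the $\xi_u$-equation with $(\xi_u)_t$ upgrades $\xi_u$ to $L^\infty(H^1) \cap H^1(L^2)$, and elliptic regularity gives $\xi_u \in L^2(H^2)$. For the time derivative I would test the weak form against $\zeta \in H^2_N(\Omega)$ and integrate by parts twice in the flux, using $\pdnu \rho_u = 0$ and $\pdnu \zeta = 0$ to transfer the derivatives onto $\zeta$; this bounds $(\theta_u)_t$ in $L^2((H^2_N)')$ by $\norm{\rho_u}_{L^2(Q)}$ and lower-order terms, and interpolation of $L^2(H^2) \cap H^1((H^2_N)')$ gives $\theta_u \in C^0(L^2)$. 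Collecting these bounds establishes the claim, and since $\bm{u} \mapsto (\Phi_u, \Xi_u, \Sigma_u)$ is linear and bounded by \eqref{Bounds:Lin} while the remainder is quadratic, hence $o(\abs{\bm{u}})$, it is the Fr\'{e}chet derivative of $\mathcal{S}$.

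The step I expect to be the main obstacle is the treatment of the variable-mobility remainders in $\mathcal{R}_\varphi$: these have no counterpart in the constant-mobility setting, and closing their estimates requires control of $\nabla\delta\mu$ and $\nabla\delta\sigma$ rather than merely of $\delta\mu$, $\delta\sigma$ in $L^2(Q)$, which is exactly the reason why \eqref{Ctsdep:result} was established in such strong norms. A secondary subtlety is the functional setting for $(\theta_u)_t$: because $\rho_u$ lies only in $L^2(Q)$, the natural dual space is $(H^2_N)'$ rather than $(H^1)'$, as reflected in the definition of $\mathcal{Y}$.
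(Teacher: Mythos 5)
Your proposal is correct and follows essentially the same route as the paper: Taylor expansion with integral remainders around the unperturbed state, bounding the quadratic remainder terms (in particular the variable-mobility ones) via the high-order continuous dependence of Theorem~\ref{thm:Ctsdep}, energy estimates mirroring Theorem~\ref{thm:Lin:state}, elliptic regularity, and the duality pairing with $H^{2}_{N}(\Omega)$ to control $(\theta_{u})_{t}$ in $L^{2}(0,T;(H^{2}_{N})')$. The only deviations are cosmetic: you obtain $\theta_{u} \in L^{2}(0,T;H^{2})$ directly from a $-\Laplace\theta_{u}$ test rather than by elliptic regularity afterwards, and your Gronwall weight is $\norm{\nabla(\mu-\chi\sigma)}_{L^{4}}^{4}$ where the paper uses $\norm{\mu-\chi\sigma}_{H^{3}}^{2}$; both are integrable in time by Theorem~\ref{thm:Exist}, so either choice closes the argument.
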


\begin{proof}
First we recall Taylor's theorem with integral remainder for $F \in C^{2}(\R)$ and $a, x  \in \R$:
\begin{align*}
F(x) = F(a) + F'(a)(x-a) + (x-a)^{2} \int_{0}^{1} F''(a+z(x-a))(1-z) \dz.
\end{align*}
Then, using the definitions of $\theta_{u}$, $\rho_{u}$ and $\xi_{u}$ we have
\begin{align*}
f(\varphi_{u}) - f(\varphi) - f'(\varphi) \Phi_{u} & = f'(\varphi) \theta_{u} + (\varphi_{u} - \varphi)^{2} R_{f},
\end{align*}
where 
\begin{align*}
R_{f} := \int_{0}^{1} f''(\varphi + z(\varphi_{u} - \varphi)) (1-z) \dz,
\end{align*}
and similar relations for $m$, $h$, $g$, $\Psi'$ with remainders $R_{m}$, $R_{h}$, $R_{g}$ and $R_{\Psi}$, respectively also hold.  Thanks to the boundedness of $\varphi_{u}$ and $\varphi$ a.e. in $Q$, and the boundedness of $g''$, it is easily to infer that $R_{f}$, $R_{h}$, $R_{g}$, $R_{\Psi}$ are bounded a.e. in $Q$.

Next, to determine the equations satisfied by $(\theta_{u}, \rho_{u}, \xi_{u})$, note that
\begin{align*}
& \CC_{u} h(\varphi_{u}) \sigma_{u} - \CC h(\varphi) \sigma - \CC \left ( h'(\varphi) \Phi_{u} \sigma + h(\varphi) \Sigma_{u} \right ) - u_{\CC} h(\varphi) \sigma \\
& \quad = (\CC_{u} - \CC)(h(\varphi_{u}) - h(\varphi))(\sigma_{u} - \sigma) + \CC (h(\varphi_{u}) - h(\varphi))(\sigma_{u} - \sigma)  \\
& \qquad  + (\CC_{u} - \CC) (h(\varphi_{u}) - h(\varphi)) \sigma + \CC( h(\varphi_{u}) - h(\varphi) - h'(\varphi) \Phi_{u}) \sigma \\
& \qquad + (\CC_{u} - \CC) h(\varphi) (\sigma_{u} - \sigma) + \CC h(\varphi) (\sigma_{u} - \sigma - \Sigma_{u}) + \underbrace{(\CC_{u} - \CC - u_{\CC})}_{=0} h(\varphi) \sigma \\
& \quad = u_{\CC} \left ( (h(\varphi_{u}) - h(\varphi)) (\sigma_{u} - \sigma) + \sigma (h(\varphi_{u}) - h(\varphi)) + h(\varphi)(\sigma_{u} - \sigma) \right ) \\
& \qquad + \CC \left ( (h(\varphi_{u}) - h(\varphi))(\sigma_{u} - \sigma) + \sigma \left [h'(\varphi)\theta_{u} + (\varphi_{u} - \varphi)^{2} R_{h} \right ] + h(\varphi) \xi_{u} \right ) \\
& \quad =: X_{\sigma}.
\end{align*}
With a similar calculation we have
\begin{align*}
& \PP_{u} f(\varphi_{u}) g(\sigma_{u}) - \PP f(\varphi) g(\sigma) \\
& \qquad  - \PP (g(\sigma) f'(\varphi) \Phi_{u} + f(\varphi) g'(\sigma) \Sigma_{u}) - u_{\PP} f(\varphi) g(\sigma) \\
& \quad = u_{\PP} \left ( (f(\varphi_{u}) - f(\varphi))(g(\sigma_{u}) - g(\sigma)) + g(\sigma)(f(\varphi_{u}) - f(\varphi)) \right ) \\
& \qquad + u_{\PP} f(\varphi)(g(\sigma_{u}) - g(\sigma)) + \PP f(\varphi) \left [g'(\sigma) \xi_{u} + (\sigma_{u} - \sigma)^{2} R_{g} \right ] \\
& \qquad + \PP \left ( (f(\varphi_{u}) - f(\varphi))(g(\sigma_{u}) - g(\sigma)) + g(\sigma) \left [ f'(\varphi) \theta_{u} + (\varphi_{u} - \varphi)^{2} R_{f} \right ] \right ) \\
& \quad =: X_{\varphi}
\end{align*}
and
\begin{align*}
& m(\varphi_{u}) \nabla (\mu_{u} - \chi_{u} \sigma_{u}) - m(\varphi) \nabla (\mu - \chi \sigma) \\
& \qquad - m(\varphi) \nabla (\Xi_{u} - \chi \Sigma_{u} - u_{\chi} \sigma) - m'(\varphi) \Phi_{u} \nabla (\mu - \chi \sigma) \\
& \quad = (m(\varphi_{u}) - m(\varphi)) \nabla (\mu_{u} - \mu - (\chi_{u} - \chi)(\sigma_{u} - \sigma)) \\
& \qquad + \left [ m''(\varphi) \theta_{u} + (\varphi_{u} - \varphi)^{2} R_{m} \right ] \nabla (\mu - \chi \sigma) + m(\varphi) \nabla (\rho_{u} - \chi \xi_{u}) \\
& \qquad - \chi (m(\varphi_{u}) - m(\varphi)) \nabla (\sigma_{u} - \sigma) \\
& \qquad - u_{\chi} \left ( (m(\varphi_{u}) - m(\varphi)) \nabla \sigma + m(\varphi) \nabla (\sigma_{u} - \sigma) \right ) \\
& \quad =: \bm{X}_{m} + m(\varphi) \nabla \rho_{u}.
\end{align*}
Then, from the regularities stated in Theorem \ref{thm:Exist} and Theorem \ref{thm:Lin:state}, $(\theta_{u}, \rho_{u}, \xi_{u})$ satisfies 
\begin{align*}
\theta_{u} & \in L^{\infty}(0,T;H^{1}(\Omega)) \cap L^{2}(0,T;H^{3}(\Omega) \cap H^{2}_{N}(\Omega)) \cap H^{1}(0,T;(H^{1}(\Omega))'), \\
\rho_{u} & \in L^{2}(0,T;H^{1}(\Omega)), \\
\xi_{u} & \in L^{\infty}(0,T;H^{1}(\Omega)) \cap L^{2}(0,T;H^{2}_{N}(\Omega)) \cap H^{1}(0,T;L^{2}(\Omega)),
\end{align*}
with $\theta_{u}(0) = 0$, $\xi_{u}(0) = 0$, $\pdnu \theta_{u} = \pdnu \rho_{u} = \pdnu \xi_{u} = 0$ on $\pd \Omega$,
\begin{subequations}
\begin{alignat}{3}
\rho_{u} & = \beta \eps^{-1} (\Psi''(\varphi) \theta_{u} + (\varphi_{u} - \varphi)^{2} R_{\Psi}) - \beta \eps \Laplace \theta_{u} && \text{ in } Q,\label{Fdiff:1}  \\
(\xi_{u})_{t} & = \Laplace \xi_{u} - X_{\sigma} && \text{ in } Q,  \label{Fdiff:2}
\end{alignat}
\end{subequations}
and
\begin{align}\label{Fdiff:3}
\inner{(\theta_{u})_{t}}{\zeta}_{H^{1}} & = \int_{\Omega} -\bm{X}_{m} \cdot \nabla \zeta - m(\varphi) \nabla \rho_{u} \cdot \nabla \zeta + X_{\varphi} \zeta \dx
\end{align}
for a.e. $t \in (0,T)$ and for all $\zeta \in H^{1}(\Omega)$.

We now derive a priori estimates for $(\theta_{u}, \rho_{u}, \xi_{u})$.  Below the symbol $C$ denotes positive constants that are not dependent on $(\theta_{u}, \rho_{u}, \xi_{u}, u_{\PP}, u_{\chi}, u_{\CC})$ and may vary from line to line, and the symbol $\mathcal{F}$ is short for $\left ( \abs{u_{\PP}} + \abs{u_{\chi}} + \abs{u_{\CC}} \right )$.

\paragraph{First estimate.}
Testing \eqref{Fdiff:2} with $\xi_{u}$ and integrating in time gives
\begin{align*}
\frac{1}{2} \norm{\xi_{u}(s)}_{L^{2}}^{2} + \int_{0}^{s} \norm{\nabla \xi_{u}}_{L^{2}}^{2} \dt = \int_{0}^{s} \int_{\Omega} - X_{\sigma} \xi_{u} \dx \dt,
\end{align*}
for any $s \in (0,T)$.  By the Lipschitz continuity of $h$, and the continuous dependence result \eqref{Ctsdep:result} from Theorem \ref{thm:Ctsdep}, we find that
\begin{equation}\label{Xsigma}
\begin{aligned}
\norm{X_{\sigma}}_{L^{2}(0,s;L^{2})} & \leq C \left ( \norm{\varphi_{u} - \varphi}_{L^{2}(L^{\infty})} \norm{\sigma_{u} - \sigma}_{L^{\infty}(L^{2})} + \norm{\sigma}_{L^{\infty}(L^{\infty})} \norm{\theta_{u}}_{L^{2}(0,s;L^{2})} \right )  \\
& \quad +  C \left (  \norm{\sigma}_{L^{\infty}(L^{\infty})} \norm{\varphi_{u} - \varphi}_{L^{2}(L^{\infty})} \norm{\varphi_{u} - \varphi}_{L^{\infty}(L^{2})} + \norm{\xi_{u}}_{L^{2}(0,s;L^{2})}\right ) \\
& \quad + C \abs{u_{\CC}} \left ( \norm{\sigma}_{L^{\infty}(L^{2})} \norm{\varphi_{u} - \varphi}_{L^{2}(L^{\infty})} + \norm{\sigma_{u} - \sigma}_{L^{2}(Q)} \right ) \\
& \quad + C \abs{u_{\CC}} \norm{\varphi_{u} - \varphi}_{L^{2}(L^{\infty})} \norm{\sigma_{u} - \sigma}_{L^{\infty}(L^{2})} \\
& \leq C \left ( \norm{\xi_{u}}_{L^{2}(0,s;L^{2})} + \norm{\theta_{u}}_{L^{2}(0,s;L^{2})} \right ) + C \left ( \mathcal{F}^{2} + \mathcal{F}^{3} \right ).
\end{aligned}
\end{equation}
Hence, we obtain for any $s \in (0,T)$,
\begin{align}\label{Fdiff:xi}
\norm{\xi_{u}(s)}_{L^{2}}^{2} + \norm{\nabla \xi_{u}}_{L^{2}(0,s;L^{2})}^{2} \leq  C \left ( \norm{\xi_{u}}_{L^{2}(0,s;L^{2})}^{2} + \norm{\theta_{u}}_{L^{2}(0,s;L^{2})}^{2} \right ) + C \left (\mathcal{F}^{4} + \mathcal{F}^{6} \right ).
\end{align}
Next, let us compute
\begin{equation}\label{bmX:m:est}
\begin{aligned}
\norm{\bm{X}_{m}}_{L^{2}(0,s;L^{2})}^{2} & \leq \int_{0}^{s} \int_{\Omega} \abs{ m(\varphi) \chi \nabla \xi_{u}}^{2} + \abs{u_{\chi} m(\varphi) \nabla (\sigma_{u} - \sigma)}^{2} \dx \dt \\
& \quad + \int_{0}^{s} \int_{\Omega}  \abs{m(\varphi_{u}) - m(\varphi)}^{2} \abs{\chi \nabla (\sigma_{u} - \sigma) + u_{\chi} \nabla \sigma}^{2} \dx \dt \\
& \quad + \int_{0}^{s} \int_{\Omega} \abs{m(\varphi_{u}) - m(\varphi)}^{2} \abs{\nabla (\mu_{u} - \mu) - u_{\chi}(\sigma_{u} - \sigma))}^{2} \dx \dt \\
& \quad + \int_{0}^{s} \int_{\Omega} \abs{m''(\varphi) \theta_{u} + (\varphi_{u} - \varphi)^{2} R_{m}}^{2} \abs{\nabla (\mu - \chi \sigma)}^{2} \dx \dt \\
& =: L_{1} + L_{2} + L_{3} + L_{4},
\end{aligned}
\end{equation}
and upon using the boundedness of $\sigma$, $\nabla \sigma$ a.e. in $Q$ and $\nabla \mu \in L^{2}(0,T;H^{2}) \cap L^{\infty}(0,T;L^{2})$ from Theorem \ref{thm:Exist}, we find that
\begin{align*}
L_{1} & \leq \chi^{2} n_{1}^{2} \norm{\nabla \xi_{u}}_{L^{2}(0,s;L^{2})}^{2} + C \abs{u_{\chi}}^{2} \norm{\nabla (\sigma_{u} - \sigma)}_{L^{2}(Q)}^{2}, \\
L_{2} & \leq C \norm{\varphi_{u} - \varphi}_{L^{2}(L^{\infty})}^{2} \left ( \norm{\nabla (\sigma_{u} - \sigma)}_{L^{\infty}(L^{2})}^{2} + \abs{u_{\chi}}^{2} \right ), \\
L_{3} & \leq C \norm{\varphi_{u} - \varphi}_{L^{\infty}(L^{3})}^{2} \left ( \norm{\nabla (\mu_{u} - \mu)}_{L^{2}(L^{6})}^{2} + \abs{u_{\chi}}^{2} \norm{\nabla (\sigma_{u} - \sigma)}_{L^{2}(L^{6})}^{2} \right ), \\
L_{4} & \leq C\int_{0}^{s} \norm{\theta_{u}}_{L^{2}}^{2} \norm{\nabla (\mu - \chi \sigma)}_{L^{\infty}}^{2} \dt \\
& \quad + C\norm{\varphi_{u} - \varphi}_{L^{2}(L^{\infty})}^{2} \norm{\varphi_{u} - \varphi}_{L^{\infty}(L^{\infty})}^{2} \norm{\nabla (\mu - \chi \sigma)}_{L^{\infty}(L^{2})}^{2}.
\end{align*}
Let us remark here that we require continuous dependence for $\varphi$ in $L^{\infty}(0,T;L^{\infty})$ and for $\mu$ in $L^{2}(0,T;W^{1,6})$, and the analogous continuous dependence result stated in \cite[Thm. 3]{LamWu} is not sufficient to control $L_{3}$ and $L_{4}$.

Thanks to the continuous dependence results in Theorem \ref{thm:Ctsdep} we easily infer
\begin{align}\label{bmX:m}
\norm{\bm{X}_{m}}_{L^{2}(0,s;L^{2})} \leq C \left ( \mathcal{F}^{2} + \mathcal{F}^{3} + \left ( \int_{0}^{s} \norm{\theta_{u}}_{L^{2}}^{2} \norm{\mu - \chi \sigma}_{H^{3}}^{2} \dt \right)^{\frac{1}{2}} \right ) + \chi n_{1} \norm{\nabla \xi_{u}}_{L^{2}(0,s;L^{2})},
\end{align}
and so 
\begin{equation}\label{Fdiff:Xm}
\begin{aligned}
\abs{\int_{0}^{s} \int_{\Omega} \bm{X}_{m} \cdot \nabla \theta_{u} \dx \dt} & \leq C \left ( \mathcal{F}^{4} + \mathcal{F}^{6} + \int_{0}^{s} \norm{\mu - \chi \sigma}_{H^{3}}^{2} \norm{\theta_{u}}_{L^{2}}^{2} \dt\right ) \\
& \quad + \norm{\nabla \theta_{u}}_{L^{2}(0,s;L^{2})}^{2} +  \frac{\chi^{2} n_{1}^{2}}{2}\norm{\nabla \xi_{u}}_{L^{2}(0,s;L^{2})}^{2}.
\end{aligned}
\end{equation}
Similarly, it holds that
\begin{align}
\label{Xvarphi}
\norm{X_{\varphi}}_{L^{2}(0,s;L^{2})} & \leq C \left (  \mathcal{F}^{2} + \mathcal{F}^{3} + \norm{\theta_{u}}_{L^{2}(0,s;L^{2})} + \norm{\xi_{u}}_{L^{2}(0,s;L^{2})} \right ).
\end{align}
Then, testing \eqref{Fdiff:3} with  $\zeta = \beta \eps \theta_{u}$ and \eqref{Fdiff:1} with $D \theta_{u}$ and $m(\varphi)\rho_{u}$ for some positive constant $D$ yet to be determined, upon summing and integrating in time gives
\begin{align*}
& \frac{\beta \eps}{2} \norm{\theta_{u}(s)}_{L^{2}}^{2}  + D \beta \eps \norm{\nabla \theta_{u}}_{L^{2}(0,s;L^{2})}^{2} + \norm{m^{\frac{1}{2}}(\varphi)\rho_{u}}_{L^{2}(0,s;L^{2})}^{2} \\
& \quad = \int_{0}^{s} \int_{\Omega} - \beta \eps \bm{X}_{m} \cdot \nabla \theta_{u} + \beta \eps X_{\varphi} \theta_{u} + D \rho_{u} \theta_{u} \dx \dt \\
& \qquad -  \int_{0}^{s} \int_{\Omega} D \beta \eps^{-1} \left ( \Psi''(\varphi) \abs{\theta_{u}}^{2} + (\varphi_{u} - \varphi)^{2} R_{\Psi} \theta_{u} \right ) \dx \dt \\
& \qquad + \int_{0}^{s} \int_{\Omega} m(\varphi) \beta \eps^{-1} \left ( \Psi''(\varphi) \theta_{u} \rho_{u} + (\varphi_{u} - \varphi)^{2} R_{\Psi} \rho_{u} \right )  \dx \dt \\
& \qquad + \int_{0}^{s} \int_{\Omega} \beta \eps m'(\varphi) \rho_{u} \nabla \varphi \cdot \nabla \theta_{u}  \dx \dt \\
& \quad \leq C \left ( \mathcal{F}^{4} + \mathcal{F}^{6} \right ) + \left ( 1 + \frac{\beta^{2} \eps^{2} \norm{m'(\varphi)\nabla \varphi}_{L^{\infty}(L^{\infty})}^{2}}{n_{0}} \right ) \norm{\nabla \theta_{u}}_{L^{2}(0,s;L^{2})}^{2} + \frac{n_{0}}{2} \norm{\rho_{u}}_{L^{2}(0,s;L^{2})}^{2} \\
& \qquad + \frac{\chi^{2} n_{1}^{2}}{2} \norm{\nabla \xi_{u}}_{L^{2}(0,s;L^{2})}^{2}  + C \norm{\xi_{u}}_{L^{2}(0,s;L^{2})}^{2} \\
& \qquad + C(1 + D^{2}) \int_{0}^{s} \left ( 1 + \norm{\mu - \chi \sigma}_{H^{3}}^{2} \right ) \norm{\theta_{u}}_{L^{2}}^{2} \dt.
\end{align*}
Adding \eqref{Fdiff:xi} multiplied by a constant $E > \frac{\chi^{2} n_{1}^{2}}{2}$ to the above, and choosing $D$ sufficiently large yields the integral inequality
\begin{align*}
& \norm{\xi_{u}(s)}_{L^{2}}^{2} + \norm{\theta_{u}(s)}_{L^{2}}^{2} + \norm{\nabla \xi_{u}}_{L^{2}(0,s;L^{2})}^{2} + \norm{\nabla \theta_{u}}_{L^{2}(0,s;L^{2})}^{2} + \norm{\rho_{u}}_{L^{2}(0,s;L^{2})}^{2} \\
& \quad \leq C \int_{0}^{s} \left ( 1 + \norm{\mu - \chi \sigma}_{H^{3}}^{2} \right ) \left ( \norm{\xi_{u}}_{L^{2}}^{2} + \norm{\theta_{u}}_{L^{2}}^{2} \right ) \dt + C \left ( \mathcal{F}^{4} + \mathcal{F}^{6} \right )
\end{align*}
for any $s \in (0,T)$, and by a Gronwall inequality in integral form we obtain
\begin{align}\label{Fdiff:est:1}
\norm{\xi_{u}}_{L^{\infty}(0,T;L^{2}) \cap L^{2}(0,T;H^{1})} + \norm{\theta_{u}}_{L^{\infty}(0,T;L^{2})\cap L^{2}(0,T;H^{1})} + \norm{\rho_{u}}_{L^{2}(Q)} \leq C  \left ( \mathcal{F}^{2} + \mathcal{F}^{3} \right ).
\end{align}

\paragraph{Second estimate.}
Testing \eqref{Fdiff:2} with $( \xi_{u})_{t}$, and using \eqref{Xsigma} and \eqref{Fdiff:est:1} leads to
\begin{align}\label{Fdiff:est:2}
\norm{(\xi_{u})_{t}}_{L^{2}(Q)} + \norm{\nabla \xi_{u}}_{L^{\infty}(0,T;L^{2})} \leq C \left ( \mathcal{F}^{2} + \mathcal{F}^{3} \right ).
\end{align}
It is worth pointing out that here we used $\nabla (\xi_{u})_{t} \in L^{2}(0,T;(H^{1})')$ so that 
\begin{align*}
\inner{\nabla \xi_{u}}{\nabla (\xi_{u})_{t}}_{H^{1}} = \frac{1}{2} \frac{\dd}{\dt} \norm{\nabla \xi_{u}}_{L^{2}}^{2}.
\end{align*}

\paragraph{Third estimate.}
By virtue of elliptic regularity, from \eqref{Fdiff:1} and \eqref{Fdiff:2} it is easy to see that
\begin{align*}
\norm{\xi_{u}}_{L^{2}(H^{2})} & \leq C \left ( \norm{(\xi_{u})_{t}}_{L^{2}(Q)} + \norm{X_{\sigma}}_{L^{2}(Q)} \right ) \leq C  \left ( \mathcal{F}^{2} + \mathcal{F}^{3} \right ), \\
\norm{\theta_{u}}_{L^{2}(H^{2})} & \leq C \left ( \norm{\theta_{u}}_{L^{2}(H^{1})} + \norm{\rho_{u}}_{L^{2}(Q)} + \norm{\Psi''(\varphi) \theta_{u} + (\varphi_{u} - \varphi)^{2} R_{\Psi}}_{L^{2}(Q)} \right ) \leq C  \left ( \mathcal{F}^{2} + \mathcal{F}^{3} \right ) .
\end{align*}
Next, consider an arbitrary test function $\zeta \in L^{2}(0,T;H^{2}_{N})$ in \eqref{Fdiff:3}, integrating by parts and recalling \eqref{bmX:m}, \eqref{Xvarphi}, \eqref{Fdiff:est:1} yields
\begin{align*}
\abs{\int_{0}^{T} \inner{(\theta_{u})_{t}}{\zeta} \dt} & = \int_{0}^{T} \int_{\Omega} - \bm{X}_{m} \cdot \nabla \zeta + m(\varphi) \rho_{u} \Laplace \zeta + \rho_{u} m'(\varphi) \nabla \varphi \cdot \nabla \zeta + X_{\varphi} \zeta \dx \dt \\
& \leq \left ( \norm{\bm{X}_{m}}_{L^{2}(Q)} + C\norm{\rho_{u}}_{L^{2}(Q)} + \norm{X_{\varphi}}_{L^{2}(Q)} \right ) \norm{\zeta}_{L^{2}(0,T;H^{2})} \\
& \leq C  \left ( \mathcal{F}^{2} + \mathcal{F}^{3} \right ) \norm{\zeta}_{L^{2}(0,T;H^{2})},
\end{align*}
where by \eqref{Fdiff:est:1} it holds that
\begin{align*}
\int_{0}^{s} \norm{\theta_{u}}_{L^{2}}^{2} \norm{\mu - \chi \sigma}_{H^{3}}^{2} \dt \leq \norm{\mu - \chi \sigma}_{L^{2}(H^{3})}^{2} \norm{\theta_{u}}_{L^{\infty}(L^{2})}^{2} \leq C  \left ( \mathcal{F}^{4} + \mathcal{F}^{6} \right ).
\end{align*}
Altogether we have
\begin{align}\label{Fdiff:est:3}
\norm{\xi_{u}}_{L^{2}(0,T;H^{2})} + \norm{\theta_{u}}_{L^{2}(0,T;H^{2}) \cap H^{1}(0,T;(H^{2}_{N})')} \leq C  \left ( \mathcal{F}^{2} + \mathcal{F}^{3} \right ).
\end{align}
\end{proof}

\subsection{Solvability of the adjoint system}
In order to simplify the necessary optimality conditions for the minimizer $(\PP_{*}, \chi_{*}, \CC_{*})$ obtained in Theorem \ref{thm:mini}, it is convenient to first study the following adjoint system:
\begin{subequations}\label{Adjoint}
\begin{alignat}{3}
\notag -p_{t} + \beta \eps \Laplace q & = \beta \eps^{-1} \Psi''(\varphi) q - m'(\varphi) \nabla (\mu - \chi \sigma) \cdot \nabla p && \\
& \quad + \PP f'(\varphi) g(\sigma) p - \CC h'(\varphi) \sigma r + \beta_{Q}(\varphi - \varphi_{Q}) && \text{ in } Q, \label{adjoint:p} \\
q & = \div (m(\varphi) \nabla p) && \text{ in } Q, \label{adjoint:q} \\
-r_{t} & = \Laplace r - \div (\chi m(\varphi) \nabla p) - \CC h(\varphi) r + \PP f(\varphi) g'(\sigma) p && \text{ in } Q, \label{adjoint:r} \\
0 & = \pdnu p = \pdnu q = \pdnu r && \text{ on } \Gamma, \\
p(T) & = \beta_{\Omega}(\varphi(T) - \varphi_{\Omega}), \quad r(T) = 0 && \text{ in } \Omega.
\end{alignat}
\end{subequations}
The unique solvability of \eqref{Adjoint} is given in the following theorem.

\begin{thm}
Under Assumption \ref{assump:Wellposed}, $\varphi_{Q} \in L^{2}(Q)$, $\varphi_{\Omega} \in L^{2}(\Omega)$, for any $(\PP, \chi, \CC) \in \mathcal{U}_{\mathrm{ad}}$, there exists a unique triplet $(p,q,r)$ associated to $\mathcal{S}(\PP, \chi, \CC) = (\varphi, \mu, \sigma)$ with
\begin{align*}
p & \in L^{2}(0,T;H^{2}_{N}(\Omega)) \cap H^{1}(0,T;(H^{2}_{N}(\Omega))') \cap C^{0}([0,T];L^{2}(\Omega)), \\
q & \in L^{2}(0,T;L^{2}(\Omega)), \\
r & \in L^{2}(0,T;H^{2}_{N}(\Omega)) \cap L^{\infty}(0,T;H^{1}(\Omega)) \cap H^{1}(0,T;L^{2}(\Omega)),
\end{align*}
satisfying $p(T) = \beta_{\Omega}(\varphi(T) - \varphi_{\Omega})$, $r(T) = 0$ in $L^{2}(\Omega)$, \eqref{adjoint:q}, \eqref{adjoint:r}, and
\begin{equation}\label{adjoint:p:weak}
\begin{aligned}
0 & = \inner{-p_{t}}{\zeta}_{H^{2}} + \int_{\Omega} \beta \eps q \Laplace \zeta  - \beta \eps^{-1} \Psi''(\varphi) q \zeta + m'(\varphi) \zeta \nabla (\mu - \chi \sigma) \cdot \nabla p  \dx \\
& \quad + \int_{\Omega} \CC h'(\varphi) \sigma r \zeta - \PP f'(\varphi) g(\sigma) p \zeta  - \beta_{Q}(\varphi - \varphi_{Q}) \zeta \dx
\end{aligned}
\end{equation}
for a.e. $t \in (0,T)$ and for all $\zeta \in H^{2}_{N}(\Omega)$.
\end{thm}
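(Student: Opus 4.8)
The plan is to reverse time via the substitution $t\mapsto T-t$, which turns \eqref{Adjoint} into a forward-in-time linear system with initial data $p(0)=\beta_\Omega(\varphi(T)-\varphi_\Omega)\in L^2(\Omega)$ and $r(0)=0$, and then to construct a solution by a Galerkin approximation in the eigenfunctions of the Neumann--Laplacian (so that the homogeneous Neumann conditions hold automatically and $\Laplace$ maps the finite-dimensional spans into themselves). Since $q$ is slaved to $p$ through $q=\div(m(\varphi)\nabla p)$, the genuine unknowns are $(p,r)$, with \eqref{adjoint:p} a backward Cahn--Hilliard-type fourth order equation coupled to the backward heat equation \eqref{adjoint:r}; uniqueness then follows from linearity by inserting zero data and re-running the estimates below. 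Throughout, the coefficients $\varphi,\mu,\sigma$ together with $\nabla\varphi\in L^{\infty}(Q)$, $\nabla(\mu-\chi\sigma)\in L^{4}(Q)$ and $\sigma\in L^{2}(0,T;H^{2})$ are controlled by Theorem \ref{thm:Exist}, while $\Psi''(\varphi)$, $f'(\varphi)$, $g'(\sigma)$, $h'(\varphi)$ and $m'(\varphi)$ are bounded a.e.\ in $Q$ by \eqref{assump:h} and \eqref{assump:Pot}.

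For the first estimate I would test the weak formulation \eqref{adjoint:p:weak} with $\zeta=p$ and \eqref{adjoint:r} with $r$, then add. The crucial coercive contribution comes from the term $-\beta\eps\int_{\Omega} q\,\Laplace p\dx$: writing $q=m(\varphi)\Laplace p+m'(\varphi)\nabla\varphi\cdot\nabla p$ gives
\[
-\beta\eps\int_{\Omega} q\,\Laplace p\dx = -\beta\eps\int_{\Omega} m(\varphi)\abs{\Laplace p}^{2}\dx - \beta\eps\int_{\Omega} m'(\varphi)(\nabla\varphi\cdot\nabla p)\,\Laplace p\dx,
\]
whose leading term is bounded below in magnitude by $\beta\eps n_{0}\norm{\Laplace p}_{L^{2}}^{2}$ thanks to \eqref{assump:h}, while the remainder is absorbed by Young's inequality together with \eqref{nablaL2:Laplace}. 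The adjoint transport term $-\int_{\Omega} m'(\varphi)\,p\,\nabla(\mu-\chi\sigma)\cdot\nabla p\dx$ and the coupling term $\beta\eps^{-1}\int_{\Omega}\Psi''(\varphi)q\,p\dx$ are handled exactly as $I_{4}$ and $J_{9}$ in the earlier proofs, using the Gagliardo--Nirenberg inequality \eqref{GN:alt}, $\nabla(\mu-\chi\sigma)\in L^{4}(Q)$ and $\norm{q}_{L^{2}}\le n_{1}\norm{\Laplace p}_{L^{2}}+C\norm{\nabla p}_{L^{2}}$; the couplings $\CC\int_{\Omega} h'(\varphi)\sigma\,r\,p\dx$ and $\int_{\Omega}\chi\,m(\varphi)\nabla p\cdot\nabla r\dx$ are controlled by Young's inequality, the term $-\CC\int_{\Omega} h(\varphi)\abs{r}^{2}\dx\le 0$ being favourable. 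A Gronwall argument run backward in time (integrating from $t$ up to $T$) then yields $p\in L^{\infty}(0,T;L^{2})\cap L^{2}(0,T;H^{2}_{N})$ after \eqref{H2EllEst}, whence $q\in L^{2}(Q)$, and $r\in L^{\infty}(0,T;L^{2})\cap L^{2}(0,T;H^{1})$.

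For the remaining regularity I would proceed as in Theorem \ref{thm:Lin:state}. Testing \eqref{adjoint:r} with $r_{t}$ (and using the now-controlled $\nabla p$) yields $r\in L^{\infty}(0,T;H^{1})\cap H^{1}(0,T;L^{2})$, and viewing \eqref{adjoint:r} as an elliptic equation with right-hand side in $L^{2}(Q)$ gives $r\in L^{2}(0,T;H^{2}_{N})$ by \eqref{H2EllEst}. The time derivative of $p$ is read off from \eqref{adjoint:p:weak}: since $\abs{\beta\eps\int_{\Omega} q\,\Laplace\zeta\dx}\le\beta\eps\norm{q}_{L^{2}}\norm{\zeta}_{H^{2}}$ and every remaining term is bounded by $C\norm{\zeta}_{H^{1}}$, one obtains $p_{t}\in L^{2}(0,T;(H^{2}_{N})')$, and then $p\in C^{0}([0,T];L^{2})$ follows by a standard Lions--Aubin interpolation. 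Passing to the limit in the Galerkin scheme using these uniform bounds and weak/weak-$*$ compactness establishes existence, and reversing time restores the terminal conditions $p(T)=\beta_\Omega(\varphi(T)-\varphi_\Omega)$, $r(T)=0$.

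I expect the main obstacle to be the first estimate, and specifically extracting the coercivity $\beta\eps n_{0}\norm{\Laplace p}_{L^{2}}^{2}$ from the backward fourth order operator in the presence of the variable mobility $m(\varphi)$ while simultaneously absorbing the adjoint transport term $m'(\varphi)\,p\,\nabla(\mu-\chi\sigma)\cdot\nabla p$. This is exactly the point at which the high-order regularity of $(\varphi,\mu,\sigma)$ from Theorem \ref{thm:Exist} --- in particular $\nabla\varphi\in L^{\infty}(Q)$ and $\nabla(\mu-\chi\sigma)\in L^{4}(Q)$ --- becomes indispensable, mirroring its role in the continuous-dependence estimate of Theorem \ref{thm:Ctsdep}.
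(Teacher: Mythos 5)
Your proposal is correct, and it follows the paper's overall skeleton (Galerkin approximation, backward-in-time energy estimate, elliptic regularity for $r$, a duality bound for $p_{t}$ in $L^{2}(0,T;(H^{2}_{N})')$, and uniqueness by linearity), but it extracts the key coercivity in a genuinely different way. The paper never tests the $p$-equation with $p$ itself: it tests \eqref{adjoint:p} with the \emph{weighted} function $m(\varphi)p$ and tests \eqref{adjoint:q} with $\beta \eps q$ and $Dp$, so that the coercive quantities are $\beta \eps \norm{q}_{L^{2}}^{2}$ and $D\norm{m^{1/2}(\varphi)\nabla p}_{L^{2}}^{2}$ (see \eqref{adjoint:est:pq}); the $H^{2}$-regularity of $p$ is only recovered afterwards from $q = \div(m(\varphi)\nabla p) \in L^{2}(Q)$ via Grisvard's elliptic regularity \cite[Thm. 2.4.2.7]{Grisvard}, and the price of the weighted test function is the extra term $M_{1}$, whose control requires $\varphi_{t} \in L^{2}(Q)$ and $\Laplace \varphi \in L^{2}(0,T;L^{\infty})$ from Theorem \ref{thm:Exist}. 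You instead test \eqref{adjoint:p:weak} with $\zeta = p$ and expand $q = m(\varphi)\Laplace p + m'(\varphi)\nabla\varphi\cdot\nabla p$ pointwise, obtaining $\beta\eps n_{0}\norm{\Laplace p}_{L^{2}}^{2}$ directly and then $p \in L^{2}(0,T;H^{2})$ from \eqref{H2EllEst}, with the mobility cross-term absorbed via \eqref{nablaL2:Laplace} and the transport term handled via \eqref{GN:alt} exactly as in the paper's $J_{9}$-type estimates. Your route avoids both the weighted test function (and hence the $\varphi_{t}$- and $\Laplace\varphi$-terms) and the Grisvard citation, and it only needs $\nabla(\mu-\chi\sigma) \in L^{4}(Q)$ rather than the $L^{2}(0,T;L^{\infty})$-bound the paper invokes; what it requires in exchange is that the divergence can be expanded in non-divergence form, i.e. $\nabla\varphi \in L^{\infty}(Q)$ (available from \eqref{Holder}) and, at the Galerkin level, a basis of Neumann--Laplacian eigenfunctions so that $\Laplace p_{n}$ stays in the finite-dimensional space and the identity $\int_{\Omega} q_{n}\Laplace p_{n}\dx = \int_{\Omega} \div(m(\varphi)\nabla p_{n})\Laplace p_{n}\dx$ survives the projection — a point you implicitly use and should state explicitly. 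Both arguments lean on the same high-order regularity of Theorem \ref{thm:Exist}; yours keeps the fourth-order structure visible, the paper's keeps the divergence form and is the one that generalizes verbatim to the constant-mobility/polygonal setting of Section \ref{sec:ConstMob}, where $\norm{\Laplace p}_{L^{2}}$-coercivity would demand convexity for \eqref{H2EllEst} anyway. One cosmetic inaccuracy: in your $p_{t}$-estimate the transport term $\int_{\Omega} m'(\varphi)\zeta\nabla(\mu-\chi\sigma)\cdot\nabla p \dx$ is not bounded by $C\norm{\zeta}_{H^{1}}$ alone in two dimensions; like the paper, you need $\norm{\zeta}_{L^{\infty}}$ (or $\norm{\zeta}_{L^{4}}$ with the $L^{4}(Q)$-bound on $\nabla(\mu-\chi\sigma)$), but this does not affect the conclusion since the target space is $(H^{2}_{N})'$.
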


\begin{proof}
Once again the proof employs a Galerkin approximation.  In the following the symbol $C$ denotes positive constants not depending on $p$, $q$ and $r$, and may vary from line to line.

\paragraph{First estimate.} Testing \eqref{adjoint:r} with $r$, integrating in time from $s \in [0,T)$ to $T$ yields
\begin{align}\label{adjoint:est:r}
\frac{1}{2} \norm{r(s)}_{L^{2}}^{2} + \frac{1}{2} \norm{\nabla r}_{L^{2}(s,T;L^{2})}^{2} \leq C \norm{p}_{L^{2}(s,T;L^{2})} \norm{r}_{L^{2}(s,T;L^{2})} + \frac{\chi^{2} n_{1}^{2}}{2} \norm{\nabla p}_{L^{2}(s,T;L^{2})}^{2},
\end{align}
where we have neglected the non-negative term $\CC h(\varphi) \abs{r}^{2}$.  Meanwhile, testing \eqref{adjoint:p} with $m(\varphi) p$, testing \eqref{adjoint:q} with $ \beta \eps q$ and $D p$ for some positive constant $D$ yet to be determined, and upon summing leads to
\begin{equation}\label{adjoint:est:pq}
\begin{aligned}
& -\frac{1}{2} \frac{\dd}{\dt} \left ( \int_{\Omega} m(\varphi)  \abs{p}^{2} \dx \right ) + \beta \eps \norm{q}_{L^{2}}^{2} + D \norm{m^{\frac{1}{2}}(\varphi) \nabla p}_{L^{2}}^{2} \\
& \quad = \int_{\Omega} - m'(\varphi) \varphi_{t} \frac{\abs{p}^{2}}{2} + \beta \eps m'(\varphi) p \nabla q \cdot \nabla \varphi - m(\varphi) m'(\varphi) p \nabla (\mu - \chi \sigma) \cdot \nabla p \dx \\
& \qquad + \int_{\Omega} \beta \eps^{-1} \Psi''(\varphi) m(\varphi) p q - D p q  + \beta_{Q}(\varphi - \varphi_{Q}) m(\varphi) p\dx \\
& \qquad + \int_{\Omega} \PP f'(\varphi) g(\sigma) m(\varphi) \abs{p}^{2} - \CC h'(\varphi) m(\varphi) \sigma p r \dx \\
& \quad =: M_{1} + M_{2} + M_{3} .
\end{aligned}
\end{equation}
The boundedness of $f$, $g$, $m$, $h$ and their derivatives, as well as the boundedness of $\sigma$ and $\Psi''(\varphi)$ a.e. in $Q$ allow us to infer
\begin{align}\label{M2:M3}
M_{2} + M_{3} & \leq C(1 + D^{2})\left ( \norm{p}_{L^{2}}^{2} + \norm{r}_{L^{2}}^{2} + \norm{\varphi - \varphi_{Q}}_{L^{2}}^{2} \right ) + \frac{\beta \eps}{4} \norm{q}_{L^{2}}^{2}. 
\end{align}
Next, thanks to Theorem \ref{thm:Exist}, we have $\varphi_{t} \in L^{2}(Q)$, $\nabla \varphi \in L^{\infty}(0,T;L^{\infty})$, $\Laplace \varphi \in  L^{2}(0,T;H^{2})$ and $\nabla (\mu - \chi \sigma) \in L^{2}(0,T;H^{2})$, and so after integrating by parts and applying the Gagliardo--Nirenberg inequality \eqref{GN:alt} we see that
\begin{align*}
M_{1} & = \int_{\Omega} - m'(\varphi) \varphi_{t} \frac{\abs{p}^{2}}{2} - \beta \eps \left ( m''(\varphi) \abs{\nabla \varphi}^{2} + m'(\varphi) \Laplace \varphi  \right ) p q \dx \\
& \quad - \int_{\Omega} \beta \eps m'(\varphi) q \nabla p \cdot \nabla \varphi + m(\varphi) m'(\varphi) p \nabla (\mu - \chi \sigma) \cdot \nabla p \dx \\
& \leq C \norm{\varphi_{t}}_{L^{2}} \left ( \norm{p}_{L^{2}} \norm{\nabla p}_{L^{2}} + \norm{p}_{L^{2}}^{2} \right ) + C \left ( 1 + \norm{\Laplace \varphi}_{L^{\infty}} \right ) \norm{p}_{L^{2}} \norm{q}_{L^{2}} \\
& \quad + \beta \eps \norm{m'(\varphi) \nabla \varphi}_{L^{\infty}(L^{\infty})} \norm{q}_{L^{2}} \norm{\nabla p}_{L^{2}} + C \norm{\nabla (\mu - \chi \sigma)}_{L^{\infty}} \norm{p}_{L^{2}} \norm{\nabla p}_{L^{2}} \\
&  \leq \frac{\beta \eps}{4} \norm{q}_{L^{2}}^{2} + C \left ( 1 + \norm{m'(\varphi) \nabla \varphi}_{L^{\infty}(L^{\infty})}^{2} \right ) \norm{\nabla p}_{L^{2}}^{2} \\
& \quad + C \left ( 1 + \norm{\varphi_{t}}_{L^{2}}^{2} + \norm{\Laplace \varphi}_{L^{\infty}}^{2} + \norm{\nabla (\mu - \chi \sigma)}_{L^{\infty}}^{2} \right ) \norm{p}_{L^{2}}^{2}.
\end{align*}
Hence, integrating \eqref{adjoint:est:pq} in time from $s \in [0,T)$ to $T$ and using the above estimates we have
\begin{align*}
& \frac{n_{0}}{2} \norm{p(s)}_{L^{2}}^{2} - C \int_{s}^{T} \left ( 1 + D^{2} + \norm{\varphi_{t}}_{L^{2}}^{2} + \norm{\Laplace \varphi}_{L^{\infty}}^{2} + \norm{\nabla (\mu - \chi \sigma)}_{L^{\infty}}^{2} \right ) \norm{p}_{L^{2}}^{2} \dt \\
& \qquad + \frac{\beta \eps}{2} \norm{q}_{L^{2}(s,T;L^{2})}^{2} + \left ( D n_{0} - C - C \norm{m'(\varphi) \nabla \varphi}_{L^{\infty}(L^{\infty})}^{2} \right )  \norm{\nabla p}_{L^{2}(s,T;L^{2})}^{2} \\
& \quad \leq C(1 + D^{2}) \left ( \norm{r}_{L^{2}(s,T;L^{2})}^{2} + \norm{\varphi - \varphi_{Q}}_{L^{2}(Q)}^{2} + \norm{\varphi(T) - \varphi_{\Omega}}_{L^{2}}^{2} \right )
\end{align*}
Adding the above inequality to \eqref{adjoint:est:r}, and choose $D$ sufficiently large, 
so that the pre factor of $\norm{\nabla p}_{L^{2}(s,T;L^{2})}^{2}$ 
is positive then yields
\begin{equation}\label{adjoint:main}
\begin{aligned}
& \norm{p(s)}_{L^{2}}^{2} + \norm{r(s)}_{L^{2}}^{2} + \norm{q}_{L^{2}(s,T;L^{2})}^{2} + \norm{\nabla p}_{L^{2}(s,T;L^{2})}^{2} + \norm{\nabla r}_{L^{2}(s,T;L^{2})}^{2} \\
& \quad \leq C \int_{s}^{T} \left ( 1 + \norm{\varphi_{t}}_{L^{2}}^{2} + \norm{\Laplace \varphi}_{L^{\infty}}^{2} + \norm{\nabla (\mu - \chi \sigma)}_{L^{\infty}}^{2} \right ) \left ( \norm{p}_{L^{2}}^{2} + \norm{r}_{L^{2}}^{2} \right ) \dt \\
& \qquad + C \norm{\varphi - \varphi_{Q}}_{L^{2}(Q)}^{2} + C \norm{\varphi(T) - \varphi_{\Omega}}_{L^{2}}^{2},
\end{aligned}
\end{equation}
for any $s \in [0,T)$, and a Gronwall argument leads to
\begin{align}\label{adjoint:1}
\norm{p}_{L^{\infty}(0,T;L^{2}) \cap L^{2}(0,T;H^{1})} + \norm{r}_{L^{\infty}(0,T;L^{2}) \cap L^{2}(0,T;H^{1})} + \norm{q}_{L^{2}(Q)} \leq C.
\end{align}

\paragraph{Second estimate.} As $q \in L^{2}(Q)$ and $m(\varphi)$ is uniformly Lipschitz, owning to elliptic regularity \cite[Thm. 2.4.2.7]{Grisvard} we have 
\begin{align*}
\norm{p}_{L^{2}(0,T;H^{2})} \leq C.
\end{align*}
Then, it is easy to see that $\div (m(\varphi) \nabla p) \in L^{2}(Q)$.  By testing \eqref{adjoint:r} with $r_{t}$ we have that $\nabla r \in L^{\infty}(0,T;L^{2})$ and $r_{t} \in L^{2}(Q)$.  Elliptic regularity then provides $r \in L^{2}(0,T;H^{2})$, and altogether it holds that
\begin{align}\label{adjoint:2}
\norm{r}_{L^{2}(0,T;H^{2}) \cap L^{\infty}(0,T;H^{1}) \cap H^{1}(0,T;L^{2})} + \norm{p}_{L^{2}(0,T;H^{2})} \leq C.
\end{align}

\paragraph{Third estimate.}
Testing \eqref{adjoint:p} with an arbitrary test function $\zeta \in L^{2}(0,T;H^{2}_{N})$ and integrating by parts yields
\begin{align*}
\abs{\int_{0}^{T} \inner{p_{t}}{\zeta} \dt} & \leq C \norm{q}_{L^{2}(Q)} \left (\norm{\Laplace \zeta}_{L^{2}(Q)} + \norm{\zeta}_{L^{2}(Q)} \right ) \\
& \quad + C\norm{\nabla (\mu - \chi \sigma)}_{L^{\infty}(L^{2})} \norm{\nabla p}_{L^{2}(Q)} \norm{\zeta}_{L^{2}(L^{\infty})} \\
& \quad + C \left ( \norm{p}_{L^{2}(Q)} + \norm{r}_{L^{2}(Q)} + \norm{\varphi - \varphi_{Q}}_{L^{2}(Q)} \right ) \norm{\zeta}_{L^{2}(Q)} \\
& \leq C \norm{\zeta}_{L^{2}(0,T;H^{2})},
\end{align*}
and so
\begin{align}\label{adjoint:3}
\norm{p_{t}}_{L^{2}(0,T;(H^{2}_{N})')} \leq C.
\end{align}

\paragraph{Uniqueness.} Let $p := p_{1} - p_{2}$, $q := q_{1} - q_{2}$ and $r := r_{1} - r_{2}$ denote the difference between two solutions to \eqref{Adjoint} with the same initial data, then it holds that
\begin{align*}
p & \in L^{2}(0,T;H^{2}_{N}(\Omega)) \cap H^{1}(0,T;(H^{2}_{N}(\Omega))') \cap L^{\infty}(0,T;L^{2}(\Omega)), \\
q & \in L^{2}(0,T;L^{2}(\Omega)), \\
r & \in L^{2}(0,T;H^{2}_{N}(\Omega)) \cap L^{\infty}(0,T;H^{1}(\Omega)) \cap H^{1}(0,T;L^{2}(\Omega)),
\end{align*}
satisfying $r(T) = p(T) = 0$, \eqref{adjoint:q}, \eqref{adjoint:r} and
\begin{equation}\label{adjoint:uniq:p}
\begin{aligned}
0 & = \inner{-p_{t}}{\zeta}_{H^{2}} + \int_{\Omega} \beta \eps q \Laplace \zeta  - \beta \eps^{-1} \Psi''(\varphi) q \zeta + m'(\varphi) \zeta \nabla \mu \cdot \nabla p  \dx \\
& \quad + \int_{\Omega} \CC h'(\varphi) \sigma r \zeta - \PP f'(\varphi) g(\sigma) p \zeta  \dx
\end{aligned}
\end{equation}
for a.e. $t \in (0,T)$ and for all $\zeta \in H^{2}(\Omega)$.  Using the boundedness of $\varphi$ and $\nabla \varphi$ in $Q$, the boundedness of $p$ in $L^{\infty}(0,T;L^{2}) \cap L^{2}(0,T;H^{2}_{N})$, and the boundedness of the second derivatives $\pd_{i} \pd_{j} \varphi$ in $L^{2}(0,T;H^{2}) \cap L^{\infty}(0,T;H^{1})$, it is easy to see that $m(\varphi) p \in L^{2}(0,T;H^{2}_{N})$, and so we can substitute $\zeta = m(\varphi) p$ in \eqref{adjoint:uniq:p}.  This gives
\begin{align*}
& -\frac{1}{2} \frac{\dd}{\dt} \int_{\Omega} m(\varphi) \abs{p}^{2} \dx + \int_{\Omega} m'(\varphi) \varphi_{t} \frac{1}{2} \abs{p}^{2} + \left (  \CC h'(\varphi) \sigma r - \PP f'(\varphi) g(\sigma) p \right ) m(\varphi) p \dx \\
& \quad + \int_{\Omega} \beta \eps q \left ( \div (m(\varphi) \nabla p) + m''(\varphi) p \abs{\nabla \varphi}^{2} + m'(\varphi) \nabla p \cdot \nabla \varphi + m'(\varphi) p \Laplace \varphi \right ) \dx \\
& \quad + \int_{\Omega} m'(\varphi) m(\varphi) p \nabla (\mu - \chi \sigma) \cdot \nabla p - \beta \eps^{-1} \Psi''(\varphi) m(\varphi) p  q \dx = 0.
\end{align*}
Using \eqref{adjoint:q} in the above equality allows us to simplify the product $q \div (m(\varphi) \nabla p)$ to $\abs{q}^{2}$.  Then, testing \eqref{adjoint:q} with $D p$ and \eqref{adjoint:r} with $r$ in the same spirit as the derivation of \eqref{adjoint:1} leads to the integral inequality \eqref{adjoint:main} without the second and third terms on the right-hand side.  A Gronwall argument (\cite[Lem. 3.1]{GLNeu} with $\alpha = 0$) then gives
\begin{align*}
\norm{p}_{L^{\infty}(0,T;L^{2}) \cap L^{2}(0,T;H^{1})} + \norm{r}_{L^{\infty}(0,T;L^{2}) \cap L^{2}(0,T;H^{1})} + \norm{q}_{L^{2}(Q)} \leq 0,
\end{align*}
which yields the uniqueness of solutions.
\end{proof}

\subsection{Necessary optimality conditions}
Let $(a,b,c) \in \mathcal{U}_{\mathrm{ad}}$ be arbitrary and set $\bm{u} = (u_{\PP}, u_{\chi}, u_{\CC}) \in \R^{3}$ as $u_{\PP} = a - \PP_{*}$, $u_{\chi} = b - \chi_{*}$, $u_{\CC} = c - \CC_{*}$, where $(\PP_{*}, \chi_{*}, \CC_{*})$ denotes a minimizer obtained from Theorem \ref{thm:mini}.  Recalling the linearized state variables $(\Phi_{u}, \Xi_{u}, \Sigma_{u})$ associated to $\bm{u}$ and introducing the reduced functional as
\begin{align*}
\mathcal{J}(\PP, \chi, \CC) := J( \mathcal{S}_{1}(\PP, \chi, \CC), \PP, \chi, \CC),
\end{align*}
then by Theorem \ref{thm:Fdiff} it is Fr\'{e}chet differentiable with respect to $\PP$, $\chi$ and $\CC$ with the derivative in the direction $\bm{u}$ given as
\begin{align*}
\mathrm{D} \mathcal{J}(\PP_{*}, \chi_{*}, \CC_{*}) [\bm{u}] & = \int_{0}^{T} \int_{\Omega} \beta_{Q} ( \varphi_{*} - \varphi_{Q}) \Phi_{u} \dx \dt + \int_{\Omega}  \beta_{\Omega} (\varphi_{*}(T) - \varphi_{\Omega}) \Phi_{u}(T) \dx \\
& \quad + \beta_{\PP} \PP_{*} u_{\PP} + \beta_{\chi} \chi_{*} u_{\chi} + \beta_{\CC} \CC_{*} u_{\CC}.
\end{align*}
The above expression is non-negative as $(\PP_{*}, \chi_{*}, \CC_{*})$ is a minimizer.  Let us point out that thanks to the (compact) embedding $L^{\infty}(0,T;H^{1}) \cap H^{1}(0,T;(H^{1})') \subset C^{0}([0,T];L^{2})$, the value of $\Phi_{u}(T)$ is well-defined.  The goal is to use the linearized state equations \eqref{Lin:State} and the adjoint equations \eqref{Adjoint} to simplify the above expression.

\begin{thm}
Under Assumption \ref{assump:Wellposed}, $\varphi_{Q} \in L^{2}(Q)$, $\varphi_{\Omega} \in L^{2}(\Omega)$, let $(\PP_{*}, \chi_{*}, \CC_{*}) \in \mathcal{U}_{\mathrm{ad}}$ denote a minimizer to \eqref{Minimizer} with corresponding state variables $\mathcal{S}(\PP_{*}, \chi_{*}, \CC_{*}) = (\varphi_{*}, \mu_{*}, \sigma_{*})$ and adjoint variables $(p,q,r)$.  Then, $(\PP_{*}, \chi_{*}, \CC_{*})$ necessarily satisfies
\begin{align*}
& \int_{0}^{T} \int_{\Omega} (a - \PP_{*}) f(\varphi_{*}) g(\sigma_{*})p + (b - \chi_{*}) m(\varphi_{*}) \nabla \sigma_{*} \cdot \nabla p - (c - \CC_{*}) h(\varphi_{*}) \sigma_{*} r \dx \dt \\
& \quad + \beta_{\PP} \, \PP_{*} (a - \PP_{*}) + \beta_{\chi} \, \chi_{*} (b - \chi_{*}) + \beta_{\CC} \, \CC_{*} (c - \CC_{*}) \geq 0
\end{align*}
for all $(a,b,c) \in \mathcal{U}_{\mathrm{ad}}$.
\end{thm}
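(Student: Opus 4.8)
The starting point is the variational inequality coming from optimality: since $(\PP_*,\chi_*,\CC_*)$ minimizes the reduced functional $\mathcal{J}$ over the \emph{convex} set $\mathcal{U}_{\mathrm{ad}}$, for every $(a,b,c)\in\mathcal{U}_{\mathrm{ad}}$ the direction $\bm{u}=(a-\PP_*,\,b-\chi_*,\,c-\CC_*)$ is admissible (the segment from $(\PP_*,\chi_*,\CC_*)$ to $(a,b,c)$ stays in $\mathcal{U}_{\mathrm{ad}}$), so the Fr\'echet derivative computed above obeys $\mathrm{D}\mathcal{J}(\PP_*,\chi_*,\CC_*)[\bm{u}]\ge 0$. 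The whole task thus reduces to re-expressing the two $\Phi_u$-dependent contributions $\int_Q\beta_Q(\varphi_*-\varphi_Q)\Phi_u + \int_\Omega\beta_\Omega(\varphi_*(T)-\varphi_\Omega)\Phi_u(T)$ through the adjoint variables $(p,q,r)$; the adjoint system \eqref{Adjoint} is precisely engineered so that this expression collapses to $\int_Q u_\PP f(\varphi_*)g(\sigma_*)p + u_\chi m(\varphi_*)\nabla\sigma_*\cdot\nabla p - u_\CC h(\varphi_*)\sigma_* r$, after which substituting $\bm{u}$ gives the claim.

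For the duality identity I would first test the weak adjoint equation \eqref{adjoint:p:weak} with $\zeta=\Phi_u$ (legitimate as $\Phi_u\in L^2(0,T;H^2_N)$) and the weak linearized equation \eqref{Lin:1:weak} with $\zeta=p$ (legitimate as $p\in L^2(0,T;H^1)$), then integrate over $(0,T)$ and add. The two time-derivative contributions combine, via the Lions--Magenes integration-by-parts formula applied to the pair $(p,\Phi_u)$ in the Gelfand triple $H^2_N\hookrightarrow L^2\hookrightarrow(H^2_N)'$ (note $(H^1)'\hookrightarrow(H^2_N)'$, so the two pairings $\inner{(\Phi_u)_t}{p}_{H^1}$ and $\inner{(\Phi_u)_t}{p}_{H^2}$ agree), together with $\Phi_u(0)=0$ and the terminal datum $p(T)=\beta_\Omega(\varphi_*(T)-\varphi_\Omega)$, to produce exactly the boundary term $\int_\Omega\beta_\Omega(\varphi_*(T)-\varphi_\Omega)\Phi_u(T)$. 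In the remaining bulk integrals the coupling terms carrying $m'(\varphi_*)\nabla(\mu_*-\chi_*\sigma_*)\cdot\nabla p$ and $\PP_* f'(\varphi_*)g(\sigma_*)p$ occur with opposite signs in the two tested equations and cancel. To remove the $\Xi_u$- and $q$-contributions I would insert $q=\div(m(\varphi_*)\nabla p)$ from \eqref{adjoint:q} and $\Xi_u=\beta\eps^{-1}\Psi''(\varphi_*)\Phi_u-\beta\eps\Laplace\Phi_u$ from \eqref{Lin:2}, integrating $-\int_\Omega m(\varphi_*)\nabla\Xi_u\cdot\nabla p$ by parts to $\int_\Omega\Xi_u q$ (the boundary term vanishing by $\pdnu p=0$); the $\Psi''$-terms and the $q\,\Laplace\Phi_u$ terms then cancel pairwise using Green's identity and $\pdnu q=\pdnu\Phi_u=0$. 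What survives is $\int_Q u_\PP f(\varphi_*)g(\sigma_*)p + u_\chi m(\varphi_*)\nabla\sigma_*\cdot\nabla p + \chi_* m(\varphi_*)\nabla\Sigma_u\cdot\nabla p + \PP_* f(\varphi_*)g'(\sigma_*)\Sigma_u\,p + \CC_* h'(\varphi_*)\sigma_* r\,\Phi_u$.

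The leftover $\Sigma_u$-terms I would eliminate by a second pairing: test the linearized nutrient equation \eqref{Lin:3} with $r$ and the adjoint equation \eqref{adjoint:r} with $\Sigma_u$, integrate over $(0,T)$ and equate via time integration by parts. Using $r(T)=0$ and $\Sigma_u(0)=0$ the time derivatives cancel, the symmetric $\Laplace$ and $\CC_* h(\varphi_*)$ terms cancel by Green's identity (with $\pdnu r=\pdnu\Sigma_u=0$), and integrating $-\int_\Omega\div(\chi_* m(\varphi_*)\nabla p)\Sigma_u$ by parts yields
\begin{equation*}
\int_Q \chi_* m(\varphi_*)\nabla\Sigma_u\cdot\nabla p + \PP_* f(\varphi_*)g'(\sigma_*)\Sigma_u\,p = -\int_Q \CC_* h'(\varphi_*)\Phi_u\sigma_* r - \int_Q u_\CC h(\varphi_*)\sigma_* r.
\end{equation*}
Substituting this back, the term $-\int_Q\CC_* h'(\varphi_*)\Phi_u\sigma_* r$ cancels the remaining $\int_Q\CC_* h'(\varphi_*)\sigma_* r\,\Phi_u$, leaving precisely $\int_Q u_\PP f(\varphi_*)g(\sigma_*)p + u_\chi m(\varphi_*)\nabla\sigma_*\cdot\nabla p - u_\CC h(\varphi_*)\sigma_* r$. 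Inserting this into $\mathrm{D}\mathcal{J}(\PP_*,\chi_*,\CC_*)[\bm{u}]\ge 0$ and writing $u_\PP=a-\PP_*$, $u_\chi=b-\chi_*$, $u_\CC=c-\CC_*$ gives the assertion.

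The genuinely delicate point is not the algebra, which is a bookkeeping of sign-matched cancellations already built into the adjoint system, but the rigorous justification of the two time integrations by parts and the several spatial Green's identities under the available regularity. I must verify that the duality pairings $\inner{(\Phi_u)_t}{p}_{H^1}$ and $\inner{p_t}{\Phi_u}_{H^2}$ are both well defined (they are, since $(\Phi_u)_t\in L^2(0,T;(H^1)')$ with $p\in L^2(0,T;H^1)$, and $p_t\in L^2(0,T;(H^2_N)')$ with $\Phi_u\in L^2(0,T;H^2_N)$), that the Lions--Magenes product rule applies to $(p,\Phi_u)$ and to $(\Sigma_u,r)$ with the common pivot $H^2_N$, and that every spatial integration by parts carries a vanishing boundary contribution thanks to the homogeneous Neumann conditions satisfied by all six variables. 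The continuity $p,\Phi_u,\Sigma_u,r\in C^0([0,T];L^2)$ makes the terminal and initial evaluations $p(T)\Phi_u(T)$ and $\Sigma_u(T)r(T)$ meaningful.
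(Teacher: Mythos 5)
Your proposal is correct and takes essentially the same approach as the paper: the variational inequality $\mathrm{D}\mathcal{J}(\PP_*,\chi_*,\CC_*)[\bm{u}]\geq 0$ from convexity of $\mathcal{U}_{\mathrm{ad}}$, the pairing of \eqref{adjoint:p:weak} tested with $\Phi_u$ against \eqref{Lin:1:weak} tested with $p$ (with the $\Xi_u$- and $q$-terms cancelling via \eqref{Lin:2} and \eqref{adjoint:q}), and the second pairing of \eqref{Lin:3} with $r$ against \eqref{adjoint:r} with $\Sigma_u$ to eliminate the remaining $\Sigma_u$-terms, exactly as in the paper's proof. The only difference is that you spell out the functional-analytic justifications (Lions--Magenes integration by parts in the Gelfand triple, compatibility of the two duality pairings, $C^0([0,T];L^2)$ regularity for the endpoint evaluations) that the paper leaves implicit.
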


\begin{proof}
Testing \eqref{adjoint:p:weak} with $\Phi_{u} \in L^{2}(0,T;H^{3})$ leads to
\begin{align*}
& \int_{\Omega} \beta_{\Omega}(\varphi_{*}(T) - \varphi_{\Omega}) \Phi_{u}(T) \dx + \int_{0}^{T} \int_{\Omega} \beta_{Q}(\varphi_{*} - \varphi_{Q}) \Phi_{u} \dx \dt \\
& \quad = \int_{0}^{T} \inner{(\Phi_{u})_{t}}{p}_{H^{1}} + \int_{\Omega} \underbrace{\left ( \beta \eps \Laplace \Phi_{u} - \beta \eps^{-1} \Psi''(\varphi) \Phi_{u} \right )}_{= \; - \Xi_{u} \text{ by } \eqref{Lin:2}} q \dx \dt \\
& \qquad + \int_{\Omega} m'(\varphi_{*}) \Phi_{u} \nabla (\mu_{*} - \chi \sigma_{*}) \cdot \nabla p - \PP_{*} f'(\varphi_{*}) g(\sigma_{*}) p \; \Phi_{u} + \CC_{*} h'(\varphi_{*}) \sigma_{*} \Phi_{u} r \dx \dt.
\end{align*}
Meanwhile, testing \eqref{Lin:1:weak} with $p$ yields
\begin{align*}
& \int_{0}^{T} \inner{(\Phi_{u})_{t}}{p}_{H^{1}} \dt + \int_{\Omega} m'(\varphi_{*}) \Phi_{u} \nabla (\mu_{*} - \chi \sigma_{*}) \cdot \nabla p - \PP_{*} f'(\varphi_{*}) g(\sigma_{*}) \Phi_{u} p \dx \dt \\
& \quad = \int_{0}^{T} \int_{\Omega} - m(\varphi_{*}) \nabla (\Xi_{u} - \chi_{*} \Sigma_{u} - u_{\chi} \nabla \sigma_{*}) \cdot \nabla p  \dx \dt \\
& \qquad +  \int_{0}^{T} \int_{\Omega} \PP_{*} f(\varphi_{*}) g'(\sigma_{*}) \Sigma_{u} p + u_{\PP} f(\varphi_{*}) g(\sigma_{*}) p \dx \dt.
\end{align*} 
We obtain from combining the above two equalities:
\begin{align*}
& \int_{\Omega} \beta_{\Omega} (\varphi_{*}(T) -\varphi_{\Omega}) \Phi_{u}(T) \dx + \int_{0}^{T} \int_{\Omega} \beta_{Q} (\varphi_{*} - \varphi_{Q}) \Phi_{u} \dx \dt \\
& \quad = \int_{0}^{T} \int_{\Omega} \underbrace{- \Xi_{u} q - m(\varphi_{*}) \nabla \Xi_{u} \cdot \nabla p}_{= \; 0 \text{ by } \eqref{adjoint:q}} \; + \; m(\varphi_{*}) \nabla (\chi_{*} \Sigma_{u} + u_{\chi} \nabla \sigma_{*}) \cdot \nabla p  \dx \dt \\
& \qquad +  \int_{0}^{T} \int_{\Omega} \PP_{*} f(\varphi_{*}) g'(\sigma_{*}) \Sigma_{u} p + u_{\PP} f(\varphi_{*}) g(\sigma_{*}) p  + \CC_{*} h'(\varphi_{*}) \sigma_{*} \Phi_{u} r \dx \dt.
\end{align*}
Then, testing \eqref{Lin:3} with $r$ and \eqref{adjoint:r} with $\Sigma_{u}$ gives
\begin{align*}
& \int_{0}^{T} \int_{\Omega}  - \CC_{*} (h'(\varphi_{*}) \Phi_{u} \sigma_{*} r + h(\varphi_{*}) \Sigma_{u} r ) - u_{\CC} h(\varphi_{*}) \sigma_{*} r \dx \dt \\
& \quad = \int_{0}^{T} \int_{\Omega} (\Sigma_{u})_{t} r + \nabla \Sigma_{u} \cdot \nabla r \dx \dt \\
& \quad = \int_{0}^{T} \int_{\Omega} \chi_{*} m(\varphi_{*}) \nabla p \cdot \nabla \Sigma_{u} - \CC_{*} h(\varphi_{*}) \Sigma_{u} r + \PP_{*} f(\varphi_{*}) g'(\sigma_{*}) p \Sigma_{u} \dx \dt.
\end{align*}
This implies that
\begin{align*}
& \int_{\Omega} \beta_{\Omega} (\varphi_{*}(T) -\varphi_{\Omega}) \Phi_{u}(T) \dx + \int_{0}^{T} \int_{\Omega} \beta_{Q} (\varphi_{*} - \varphi_{Q}) \Phi_{u} \dx \dt \\
& \quad = \int_{0}^{T} \int_{\Omega} u_{\chi} m(\varphi_{*}) \nabla \sigma_{*} \cdot \nabla p + u_{\PP} f(\varphi_{*}) g(\sigma_{*}) p - u_{\CC} h(\varphi_{*}) \sigma_{*} r \dx \dt.
\end{align*}
\end{proof}

\section{Results for constant mobility}
\label{sec:ConstMob}
In the proof of Theorem \ref{thm:Exist} we required a $C^{4}$-boundary to deduce
the high order estimates $\nabla \varphi \in L^{\infty}(0,T;L^{\infty})$ and $\mu \in L^{\infty}(0,T;H^{1})$ so that we can prove continuous dependence with a variable mobility $m(\varphi)$.  A natural question is whether a similar well-posedness result holds for general convex domain $\Omega \subset \R^{2}$ with polygonal boundary, which may be more suited for numerical analysis and simulations.  The answer is positive when the mobility $m(\varphi)$ is constant.  Due to consideration of a possibly non-smooth boundary, at best we can expect is $H^{2}(\Omega)$-regularity.  However, in light of the lower regularity, the assumptions on the potential $\Psi$ can be further relaxed:

\begin{enumerate}[label=$(\mathrm{A5})$, ref = $\mathrm{A5}$]
\item \label{ass:convex:Psi} The potential $\Psi \in C^{2}(\R)$ is non-negative and there exists positive constants $R_{1}$, $R_{2}$, $R_{3}$, $R_{4}$ and $R_{5}$ such that for all $s, t \in \R$,
\begin{align*}
\Psi(s) \geq R_{1} \abs{s}^{2} - R_{2}, \quad \abs{\Psi'(s)} & \leq R_{3} \left ( 1 + \Psi(s) \right ), \quad \abs{\Psi''(s)} \leq R_{4} \left ( 1 + \abs{s}^{q} \right ), \\
\abs{\Psi'(s) - \Psi'(t)} & \leq R_{5} \left ( 1 + \abs{s}^{r} + \abs{t}^{r} \right ) \abs{s - t}, \\
\abs{\Psi''(s) - \Psi''(t)} & \leq R_{5} \left ( 1 + \abs{s}^{r-1} + \abs{t}^{r-1} \right ) \abs{s - t}
\end{align*}
for some exponents $q \in [1,\infty)$ and $r \in [1,\infty)$.
\end{enumerate}

The assertion is given as follows.
 
\begin{thm}[Well-posedness for convex polygonal domains]\label{thm:convex:dom}
Let $\Omega \subset \R^{2}$ be a convex polygonal domain, and in addition assume $m(\cdot) \equiv 1$ and $\varphi_{0}, \sigma_{0} \in H^{2}_{N}(\Omega)$, then under \eqref{assump:h}, \eqref{assump:para} and \eqref{ass:convex:Psi}, there exists a triplet of functions $(\varphi, \mu, \sigma)$ with
\begin{align*}
\varphi & \in L^{\infty}(0,T;H^{2}_{N}(\Omega)) \cap H^{1}(0,T;L^{2}(\Omega)) \cap C^{0}(\overline{Q}), \\
\mu & \in L^{\infty}(0,T;L^{2}(\Omega)) \cap L^{2}(0,T;H^{2}_{N}(\Omega)), \\
\sigma & \in L^{\infty}(0,T;H^{2}_{N}(\Omega)) \cap H^{1}(0,T;H^{2}_{N}(\Omega)) \cap C^{0}(\overline{Q}),
\end{align*}
and satisfies $\varphi(0) = \varphi_{0}$, $\sigma(0) = \sigma_{0}$ in $L^{2}(\Omega)$, and \eqref{CH:1}-\eqref{CH:3} a.e. in $Q$.  Furthermore, let $\{(\varphi_{i}, \mu_{i}, \sigma_{i})\}_{i = 1}^{2}$ denote two solutions to \eqref{Intro:CH} with the above regularities corresponding to data $\{ \varphi_{0,i}, \sigma_{0,i}, \PP_{i}, \CC_{i}, \chi_{i}\}_{i=1}^{2}$.  Then, there exists a positive constant C, not depending on the differences $\varphi_{1} - \varphi_{2}$, $\mu_{1} - \mu_{2}$, $\sigma_{1} - \sigma_{2}$, $\PP_{1} - \PP_{2}$, $\chi_{1} - \chi_{2}$, $\CC_{1} - \CC_{2}$, $\varphi_{0,1} - \varphi_{0,2}$ and $\sigma_{0,1} - \sigma_{0,2}$, such that
\begin{equation}
\label{Ctsdep:result:convex}
\begin{aligned}
& \norm{\varphi_{1} - \varphi_{2}}_{L^{\infty}(H^{1}) \cap L^{2}(H^{2})} + \norm{\sigma_{1} - \sigma_{2}}_{L^{\infty}(H^{1}) \cap L^{2}(H^{2})} + \norm{\mu_{1} - \mu_{2}}_{L^{2}(H^{1})} \\
& \quad \leq C \left ( \abs{\PP_{1} - \PP_{2}} + \abs{\CC_{1} - \CC_{2}} + \abs{\chi_{1} - \chi_{2}} + \norm{\varphi_{0,1} - \varphi_{0,2}}_{H^{1}} + \norm{\sigma_{0,1} - \sigma_{0,2}}_{H^{1}} \right ).
\end{aligned}
\end{equation}
\end{thm}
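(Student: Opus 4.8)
The plan is to prove existence by a Galerkin approximation with energy estimates truncated at the $H^{2}$ level, and continuous dependence by adapting the estimates of Theorem~\ref{thm:Ctsdep} to the case $m\equiv1$. The structural fact that makes the polygonal setting work is that the second order elliptic estimate \eqref{H2EllEst} remains valid on convex domains, so $H^{2}$-regularity for $\varphi$, $\mu$ and $\sigma$ is still available; what is lost are the higher elliptic estimate \eqref{H4EllEst} and the pointwise bound $\nabla\varphi_{i}\in L^{\infty}(0,T;L^{\infty})$. Setting $m\equiv1$ is decisive, since it annihilates exactly the terms carrying $m'(\varphi)\nabla\varphi$ and $\hat{m}:=m(\varphi_{1})-m(\varphi_{2})$ that forced those high order bounds in Theorems~\ref{thm:Exist} and \ref{thm:Ctsdep}. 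The remaining critical nonlinearities are then controlled by the two dimensional interpolation inequalities \eqref{GN:alt} and \eqref{Bre} and by the fact that $H^{2}(\Omega)$ is a Banach algebra in two dimensions.

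For existence I would run a Galerkin scheme in the eigenfunctions of the Neumann--Laplacian. Testing \eqref{CH:1} with $\mu$, \eqref{CH:2} with $\varphi_{t}$ and \eqref{CH:3} with $\sigma$, controlling the spatial mean of $\mu$ via $\overline{\mu}=\beta\eps^{-1}\overline{\Psi'(\varphi)}$ together with \eqref{ass:convex:Psi} and $\Psi(\varphi)\in L^{\infty}(0,T;L^{1})$, and using Poincar\'{e} and Gronwall, gives the basic bounds $\varphi\in L^{\infty}(0,T;H^{1})$, $\mu\in L^{2}(0,T;H^{1})$ and $\sigma\in L^{\infty}(0,T;L^{2})\cap L^{2}(0,T;H^{1})$. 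The \emph{delicate} step is the upgrade $\varphi\in L^{\infty}(0,T;H^{2}_{N})\cap H^{1}(0,T;L^{2})$: testing \eqref{CH:1} with $\varphi_{t}$, rewriting $\Psi''(\varphi)\nabla\varphi=\nabla\Psi'(\varphi)$ to trade a derivative, and bounding the residual term $\int_{\Omega}\Psi''(\varphi)\varphi_{t}\Laplace\varphi\dx$ by controlling $\norm{\Psi''(\varphi)}_{L^{\infty}}$ through \eqref{Bre} and \eqref{H2EllEst} produces a logarithmic Gronwall inequality. With $\varphi_{t}\in L^{2}(0,T;L^{2})$ in hand, reading \eqref{CH:1} as $\Laplace\mu=\varphi_{t}+\chi\Laplace\sigma-\PP f(\varphi)g(\sigma)\in L^{2}(0,T;L^{2})$ and applying \eqref{H2EllEst} give $\mu\in L^{\infty}(0,T;L^{2})\cap L^{2}(0,T;H^{2}_{N})$, and then \eqref{CH:2} read as an elliptic equation for $\varphi$ confirms $\varphi\in L^{\infty}(0,T;H^{2}_{N})$. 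For $\sigma$, testing \eqref{CH:3} with $\sigma_{t}$ and applying \eqref{H2EllEst} to \eqref{CH:3} and to its time differentiated form (using $h(\varphi)\sigma\in L^{\infty}(0,T;H^{2})$ by the algebra property) yields $\sigma\in L^{\infty}(0,T;H^{2}_{N})\cap H^{1}(0,T;H^{2}_{N})$. Standard Aubin--Lions compactness passes to the limit, and $\varphi,\sigma\in C^{0}(\overline{Q})$ follows from $L^{\infty}(0,T;H^{2})\cap H^{1}(0,T;L^{2})\hookrightarrow C^{0}([0,T];H^{s})\hookrightarrow C^{0}(\overline{Q})$ for any $s\in(1,2)$.

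For \eqref{Ctsdep:result:convex} I would reproduce the first four estimates of Theorem~\ref{thm:Ctsdep} for the difference system (keeping its hat notation), now with $\hat{m}=0$, $m_{2}\equiv1$ and $m_{2}'\equiv0$; its fifth estimate, which tested with $\Laplace^{2}\hat{\varphi}$ and required $\mu_{i}\in L^{2}(H^{3})$, is dropped entirely since \eqref{Ctsdep:result:convex} only asks for $H^{2}$-regularity of the differences. The first estimate, testing \eqref{Ctsdep:1} with $\beta\eps\hat{\varphi}$, \eqref{Ctsdep:2} with $\hat{\mu}$ and with $K\hat{\varphi}$, and \eqref{Ctsdep:3} with $J\hat{\sigma}$ for large $J,K$ (the cancellation of $\pm\beta\eps\int_{\Omega}\nabla\hat{\mu}\cdot\nabla\hat{\varphi}\dx$ now being immediate), gives $L^{\infty}(L^{2})$ control of $\hat{\varphi},\hat{\sigma}$ and $L^{2}$ control of $\nabla\hat{\varphi},\nabla\hat{\sigma},\hat{\mu}$ by $\mathcal{Y}:=\abs{\hat{\PP}}^{2}+\abs{\hat{\chi}}^{2}+\abs{\hat{\CC}}^{2}+\norm{\hat{\varphi}_{0}}_{L^{2}}^{2}+\norm{\hat{\sigma}_{0}}_{L^{2}}^{2}$. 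Viewing \eqref{Ctsdep:2} as an elliptic equation for $\hat{\varphi}$ gives $\hat{\varphi}\in L^{2}(H^{2})$; testing \eqref{Ctsdep:3} with $\hat{\sigma}_{t}$ gives $\hat{\sigma}\in L^{\infty}(H^{1})\cap L^{2}(H^{2})$ (now also involving $\norm{\hat{\sigma}_{0}}_{H^{1}}$); and testing \eqref{Ctsdep:1} with $\hat{\mu}$ and \eqref{Ctsdep:2} with $\hat{\varphi}_{t}$ gives $\nabla\hat{\varphi}\in L^{\infty}(L^{2})$ and $\nabla\hat{\mu}\in L^{2}(L^{2})$ (now involving $\norm{\hat{\varphi}_{0}}_{H^{1}}$). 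A final Gronwall step then yields \eqref{Ctsdep:result:convex}.

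The main obstacle in both parts is reconciling the reduced boundary regularity with the nonlinearities. For existence it is the $L^{\infty}(0,T;H^{2})$-bound for $\varphi$ with only $\varphi_{0}\in H^{2}_{N}$ and $\Psi\in C^{2}$: lacking \eqref{H4EllEst} or any $H^{3}$-bound, one cannot freely integrate by parts to form $\frac{\dd}{\dt}\norm{\Laplace\varphi}_{L^{2}}^{2}$, so the term $\int_{\Omega}\Psi''(\varphi)\varphi_{t}\Laplace\varphi\dx$ must be closed by the critical two dimensional inequality \eqref{Bre} rather than by a naive $L^{\infty}$-bound. For continuous dependence the analogous difficulty is estimating $\hat{\Psi}'$ and $\nabla\hat{\Psi}'$ without $\nabla\varphi_{i}\in L^{\infty}(0,T;L^{\infty})$: since only $\varphi_{i}\in L^{\infty}(0,T;H^{2})\hookrightarrow L^{\infty}(Q)$ and $\nabla\varphi_{i}\in L^{\infty}(0,T;L^{p})$ for every finite $p$ are available, a term such as $\norm{(\Psi''(\varphi_{1})-\Psi''(\varphi_{2}))\nabla\varphi_{2}}_{L^{2}}$ is handled by the Lipschitz bound on $\Psi''$ from \eqref{ass:convex:Psi} followed by H\"{o}lder and \eqref{GN:alt}, giving $\norm{\hat{\varphi}\,\nabla\varphi_{2}}_{L^{2}}\leq\norm{\hat{\varphi}}_{L^{4}}\norm{\nabla\varphi_{2}}_{L^{4}}\leq C\norm{\hat{\varphi}}_{L^{2}}^{1/2}\norm{\hat{\varphi}}_{H^{1}}^{1/2}$; similarly the coupling $\int_{\Omega}\hat{\Psi}'\hat{\varphi}_{t}\dx$ is controlled through the duality $\hat{\varphi}_{t}\in L^{2}((H^{1})')$. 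This is precisely what makes the relaxed assumption \eqref{ass:convex:Psi}, with no control on $\Psi'''$, sufficient.
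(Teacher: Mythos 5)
Your continuous-dependence argument is essentially the paper's: follow the proof of Theorem \ref{thm:Ctsdep} with $\hat{m}=0$ and $m_{2}'\equiv 0$, keep the first four estimates, drop the fifth (which needed $\mu_i\in L^2(0,T;H^3)$ and the biharmonic test function), and repair the one remaining use of $\nabla\varphi_i\in L^\infty(0,T;L^\infty)$ in \eqref{Ctsdep:nabla:Psi} by H\"{o}lder together with $H^2(\Omega)\hookrightarrow W^{1,4}(\Omega)$. That half of your proposal is correct and matches the paper.

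The existence half, however, has a genuine gap at exactly the step you call delicate. Testing \eqref{CH:1} with $\varphi_t$ and propagating $\norm{\Laplace\varphi}_{L^2}^2$ produces the term $-\beta\eps^{-1}\int_\Omega \nabla\Psi'(\varphi)\cdot\nabla\varphi_t\dx$; since $\nabla\varphi_t$ is not controlled by your left-hand side, you must integrate by parts, which yields $\beta\eps^{-1}\int_\Omega \Laplace\Psi'(\varphi)\,\varphi_t\dx$. But $\Laplace\Psi'(\varphi)=\Psi''(\varphi)\Laplace\varphi+\Psi'''(\varphi)\abs{\nabla\varphi}^2$: your claimed residual $\int_\Omega\Psi''(\varphi)\varphi_t\Laplace\varphi\dx$ is only half of this expression, and the omitted half involves $\Psi'''$, which does not exist under \eqref{ass:convex:Psi} (there $\Psi\in C^2(\R)$ only). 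This is not a removable technicality: the whole point of Section \ref{sec:ConstMob} is to dispense with the $C^3$ hypothesis and \eqref{ctsdep:Psi'''}, and your own closing claim that the argument needs ``no control on $\Psi'''$'' is contradicted by this step. The paper's proof avoids the problem with a different pairing: test \eqref{CH:1} with $\beta\eps\varphi_t$ and the \emph{time derivative} of \eqref{CH:2} with $\mu$. Then the two copies of $\int_\Omega\Laplace\mu\,\varphi_t\dx$ cancel, the propagated quantity is $\norm{\mu}_{L^2}^2$ rather than $\norm{\Laplace\varphi}_{L^2}^2$, and the only nonlinear term is $\beta\eps^{-1}\int_\Omega\Psi''(\varphi)\varphi_t\mu\dx$: differentiating $\Psi'(\varphi)$ in time costs only $\Psi''$, whereas your spatial integration by parts costs $\Psi'''$. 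That term is then estimated exactly as you intend, by Br\'{e}zis--Gallouet \eqref{Bre} together with the fact that $\norm{\Laplace\varphi}_{L^2}\in L^2(0,T)$ is already known from the previous estimate, and the bounds $\varphi\in L^\infty(0,T;H^2_N(\Omega))$, $\mu\in L^\infty(0,T;L^2(\Omega))\cap L^2(0,T;H^2_N(\Omega))$ follow afterwards from elliptic regularity on convex domains, not from a direct differential inequality for $\norm{\Laplace\varphi}_{L^2}^2$. Your existence proof needs to be repaired by this (or an equivalent) device.
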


\begin{proof}
We will sketch the derivation of the a priori estimates required by a Galerkin approximation.  Below, the symbol $C$ denotes positive constants that are independent of $(\varphi, \mu, \sigma)$ and may vary from line to line. 
\paragraph{First estimate.}  Testing \eqref{CH:1} (for $m = 1$) with $\mu + \chi \sigma$, \eqref{CH:2} with $\varphi_{t}$ and \eqref{CH:3} with $D \sigma$, for some positive constant $D$ yet to be determined, leads to the energy identity
\begin{equation}\label{1st:equality}
\begin{aligned}
& \frac{\dd}{\dt}  \int_{\Omega} \left ( A \Psi(\varphi) + \frac{B}{2} \abs{\nabla \varphi}^{2} + \frac{D}{2} \abs{\sigma}^{2} \right ) \dx \\
& \qquad + \int_{\Omega} \left (  \abs{\nabla \mu}^{2} + D\abs{\nabla \sigma}^{2} + D \CC h(\varphi) \abs{\sigma}^{2} \right ) \dx \\
& \quad = \int_{\Omega} \left ( \PP f(\varphi) g(\sigma) (\mu + \chi \sigma) - \chi \nabla \mu \cdot \nabla \sigma \right ) \dx.
\end{aligned}
\end{equation}
We estimate the right-hand side as follows:
\begin{equation}\label{1st:RHS}
\begin{aligned}
\abs{\mathrm{RHS}} & \leq C \norm{\mu - \mean{\mu}}_{L^{1}} + C \abs{\mean{\mu}} + C \norm{\sigma}_{L^{2}}^{2} + \frac{1}{4}\norm{\nabla \mu}_{L^{2}}^{2} + \chi^{2} \norm{\nabla \sigma}_{L^{2}}^{2} \\
& \leq C + \frac{1}{2} \norm{\nabla \mu}_{L^{2}}^{2} + \chi^{2} \norm{\nabla \sigma}_{L^{2}}^{2} + C \norm{\sigma}_{L^{2}}^{2} + C \norm{\Psi(\varphi)}_{L^{1}},
\end{aligned}
\end{equation}
where we have used \eqref{ass:convex:Psi} so that
\begin{align*}
& C \norm{\mu - \mean{\mu}}_{L^{1}} \leq C \norm{\nabla \mu}_{L^{2}} \leq \frac{1}{4} \norm{\nabla \mu}_{L^{2}}^{2} + C,  \\
& \text{ and } \abs{\mean{\mu}} \leq \beta \eps^{-1} \norm{\Psi'(\varphi)}_{L^{1}} \leq C \left ( 1 + \norm{\Psi(\varphi)}_{L^{1}} \right ).
\end{align*}
Choosing $D > \chi^{2}$ and using \eqref{1st:RHS} to estimate the right-hand side of \eqref{1st:equality}, applying Gronwall's inequality in integral form \cite[Lem. 3.1]{GLNeu}, the Poincar\'{e} inequality and \eqref{ass:convex:Psi}, we obtain 
\begin{align}\label{Apri:1}
\norm{\Psi(\varphi)}_{L^{\infty}(0,T;L^{1})} + \norm{\varphi}_{L^{\infty}(0,T;H^{1})} + \norm{\sigma}_{L^{\infty}(0,T;L^{2}) \cap L^{2}(0,T;H^{1})} + \norm{\mu}_{L^{2}(0,T;H^{1})} \leq C.
\end{align}

\paragraph{Second estimate.}
Testing \eqref{CH:3} with $\sigma_{t}$ shows that $\sigma_{t}$ is bounded in $L^{2}(Q)$ and $\sigma$ is bounded in $L^{\infty}(0,T;H^{1})$.  Then, viewing \eqref{CH:3} as an elliptic equation for $\sigma$ with right-hand side belonging to $L^{2}(Q)$ and applying the elliptic regularity for convex domains \cite[\S 3, Thm. 3.2.1.3]{Grisvard} we see that $\sigma$ is bounded in $L^{2}(0,T;H^{2})$.  Furthermore, by the Sobolev embedding $H^{1}(\Omega) \subset L^{p}(\Omega)$ for all $p < \infty$, it is easy to see that $\mu - \beta \eps^{-1} \Psi'(\varphi) \in L^{2}(Q)$ and hence $\varphi$ is bounded in $L^{2}(0,T;H^{2})$.  This yields
\begin{align}\label{Apri:2}
\norm{\varphi}_{L^{2}(0,T;H^{2})} + \norm{\sigma}_{L^{2}(0,T;H^{2}) \cap L^{\infty}(0,T;H^{1}) \cap H^{1}(0,T;L^{2})} \leq C.
\end{align}

\paragraph{Third estimate.}
Next, testing \eqref{CH:1} (for $m = 1)$ with $\beta \eps \varphi_{t}$ and the time derivative of \eqref{CH:2} with $\mu$, upon adding leads to
\begin{align*}
\frac{\dd}{\dt} \frac{1}{2} \norm{\mu}_{L^{2}}^{2} + \beta \eps \norm{\varphi_{t}}_{L^{2}}^{2} & = \int_{\Omega} - \chi \beta \eps \Laplace \sigma \varphi_{t} + \PP \beta \eps f(\varphi) g(\sigma) \varphi_{t} + \beta \eps^{-1} \Psi''(\varphi) \varphi_{t} \mu \dx \\
& \leq C \left ( 1 + \norm{\Laplace \sigma}_{L^{2}} \right ) \norm{\varphi_{t}}_{L^{2}} + C \left ( 1 + \norm{\varphi}_{L^{\infty}}^{q} \right ) \norm{\mu}_{L^{2}} \norm{\varphi_{t}}_{L^{2}} \\
& \leq C \left ( 1 + \norm{\Laplace \sigma}_{L^{2}}^{2} + \norm{\Laplace \varphi}_{L^{2}} \norm{\mu}_{L^{2}}^{2} \right ) + \frac{\beta \eps}{2} \norm{\varphi_{t}}_{L^{2}}^{2},
\end{align*}
where we used the elliptic estimate \eqref{H2EllEst}, the boundedness of $\varphi \in L^{\infty}(0,T;H^{1})$ from \eqref{Apri:1} and the Br\'{e}zis--Gallouet inequality \eqref{Bre} to estimate
\begin{align*}
\norm{\varphi}_{L^{\infty}}^{q} \leq C \left ( 1 + \left ( \ln \left ( 1 + \norm{\Laplace \varphi}_{L^{2}} \right ) \right )^{\frac{q}{2}} \right ) \leq C \left ( 1 + \norm{\Laplace \varphi}_{L^{2}}^{\frac{1}{2}} \right ),
\end{align*}
compare \cite[\S 5, First estimate]{LamWu}.  The above differential inequality yields $\mu \in L^{\infty}(0,T;L^{2})$ and $\varphi_{t} \in L^{2}(Q)$.  Let us point out that $\mu(0) := \beta \eps^{-1} \Psi'(\varphi_{0}) - \beta \eps \Laplace \varphi_{0} \in L^{2}(\Omega)$ by the assumption $\varphi_{0} \in H^{2}(\Omega)$.  Then, viewing \eqref{CH:1} as an elliptic equation for $\mu$ with right-hand side in $L^{2}(Q)$, and employing \cite[\S 3, Thm. 3.2.1.3]{Grisvard} yields $\mu \in L^{2}(0,T;H^{2})$.  Similarly, viewing \eqref{CH:2} as an elliptic equation for $\varphi$ with $\mu - \beta \eps^{-1} \Psi'(\varphi) \in L^{\infty}(0,T;L^{2})$ we have $\varphi \in L^{\infty}(0,T;H^{2})$.  Altogether we have
\begin{align}
\label{Apri:3}
\norm{\varphi}_{L^{\infty}(0,T;H^{2}) \cap H^{1}(0,T;L^{2})} + \norm{\mu}_{L^{2}(0,T;H^{2}) \cap L^{\infty}(0,T;L^{2})} \leq C.
\end{align}

\paragraph{Fourth estimate.}
Following the proof of Theorem \ref{thm:Exist}, i.e., testing \eqref{CH:3} with $\Laplace \sigma_{t}$ yields $\sigma \in L^{\infty}(0,T;H^{2}) \cap H^{1}(0,T;H^{1})$.  Then, differentiating \eqref{CH:3} with respect to time, and testing with $\sigma_{t}$ and also with $\Laplace \sigma_{t}$ yields $\sigma_{t} \in L^{\infty}(0,T;H^{1}) \cap L^{2}(0,T;H^{2})$, and so we have
\begin{align}\label{Apri:4}
\norm{\sigma}_{L^{\infty}(0,T;H^{2}) \cap H^{1}(0,T;H^{2})} \leq C.
\end{align}
Then, the a priori estimates \eqref{Apri:1}, \eqref{Apri:2}, \eqref{Apri:3} and \eqref{Apri:4} are sufficient to deduce a strong solution to \eqref{Intro:CH} (for $m = 1$) with the regularities stated in Theorem \ref{thm:convex:dom}.

\paragraph{Continuous dependence.} Since $\varphi_{i} \in C^{0}(\overline{Q})$, we can still follow the proof of Theorem \ref{thm:Ctsdep}, where we note that due to a constant mobility $m(\cdot) = 1$, the term $I_{4}$ for the First estimate vanish.  The high order estimates $\nabla \varphi_{i} \in L^{\infty}(0,T;L^{\infty})$ and $\mu_{i} \in L^{\infty}(0,T;H^{1})$ are only required when estimating $I_{4}$ and thus in its absence we still obtain \eqref{ctsdep:est:1}.  The assertions \eqref{ctsdep:est:2}, \eqref{ctsdep:est:3} and \eqref{Ctsdep:sigmaH2} remain valid.  Furthermore, for a constant mobility the terms involving $\norm{\nabla (\mu_{1} - \chi_{1} \sigma_{1})}_{L^{4}(Q)}$ in \eqref{Ctsdep:varphi_t} and \eqref{Ctsdep:est:4:pre} vanish, and the estimate \eqref{Ctsdep:nablamu} remains valid.

\end{proof}

The assertion of Theorem \ref{thm:mini} for the existence of a minimizer to the parameter identification optimal control problem \eqref{Minimizer} still holds for the case of a general convex polygonal domain.  We now state the analogue of Theorem \ref{thm:Lin:state}, i.e., the solvability of the linearized state equation.

\begin{thm}\label{thm:Lin:state:Convex}
For any $(u_{\PP}, u_{\chi}, u_{\CC}) \in \R^{3}$, there exists a unique triplet $(\Phi_{u}, \Xi_{u}, \Sigma_{u})$ with
\begin{align*}
\Phi_{u} & \in L^{\infty}(0,T;H^{1}(\Omega)) \cap L^{2}(0,T;H^{2}_{N}(\Omega)) \cap H^{1}(0,T;(H^{1}(\Omega))'), \\
\Xi_{u} & \in L^{2}(0,T;H^{1}(\Omega)), \\
\Sigma_{u} & \in L^{\infty}(0,T;H^{1}(\Omega)) \cap L^{2}(0,T;H^{2}_{N}(\Omega)) \cap H^{1}(0,T;L^{2}(\Omega)),
\end{align*}
satisfying $\Phi_{u}(0) = 0$, $\Sigma_{u}(0) = 0$ in $L^{2}(\Omega)$, \eqref{Lin:2}, \eqref{Lin:3} a.e. in $Q$,
\begin{equation}\label{Lin:1:weak:Convex}
\begin{aligned}
\inner{(\Phi_{u})_{t}}{\zeta}_{H^{1}} & = \int_{\Omega} - \nabla (\Xi_{u} - \chi \Sigma_{u} - u_{\chi} \nabla \sigma) \cdot \nabla \zeta \dx \\
& \quad + \int_{\Omega} \left ( \PP ( g(\sigma) f'(\varphi) \Phi_{u} + f(\varphi) g'(\sigma) \Sigma_{u}) + u_{\PP} f(\varphi) g(\sigma) \right ) \zeta \dx
\end{aligned}
\end{equation}
for a.e. $t \in (0,T)$ and for all $\zeta \in H^{1}(\Omega)$.  Furthermore, there exists a positive constant $C$, not depending on $(\Phi_{u}, \Xi_{u}, \Sigma_{u}, u_{\PP}, u_{\chi}, u_{\CC})$ such that
\begin{equation}
\label{Bounds:Lin:Convex}
\begin{aligned}
& \norm{\Phi_{u}}_{L^{\infty}(0,T;H^{1}) \cap L^{2}(0,T;H^{2}) \cap H^{1}(0,T;(H^{1})')} + \norm{\Xi_{u}}_{L^{2}(0,T;H^{1})} \\
& \qquad + \norm{\Sigma_{u}}_{L^{\infty}(0,T;H^{1}) \cap L^{2}(0,T;H^{2}) \cap H^{1}(0,T;L^{2})} \\
& \quad \leq C \left ( \abs{u_{\PP}} + \abs{u_{\chi}} + \abs{u_{\CC}} \right ).
\end{aligned}
\end{equation}
\end{thm}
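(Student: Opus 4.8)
The plan is to reproduce the Galerkin-based proof of Theorem \ref{thm:Lin:state}, exploiting throughout the simplification afforded by the constant mobility $m \equiv 1$. I would set up a Galerkin approximation in the eigenfunctions of the Neumann--Laplacian, which on a convex polygonal domain still belong to $H^{2}_{N}(\Omega)$ thanks to the elliptic regularity \cite[Thm.~3.2.1.3]{Grisvard}, and derive a priori estimates uniform in the discretization parameter. Since $m \equiv 1$, the technicality in Theorem \ref{thm:Lin:state} involving $\Pi_{n}(m(\varphi) \Xi_{u,n})$ is trivial, as $\Xi_{u,n} \in W_{n}$ already. The state variables now carry only the regularity supplied by Theorem \ref{thm:convex:dom}, namely $\varphi \in L^{\infty}(0,T;H^{2}) \cap C^{0}(\overline{Q})$, $\sigma \in L^{\infty}(0,T;H^{2}) \cap C^{0}(\overline{Q})$ and $\mu \in L^{\infty}(0,T;L^{2}) \cap L^{2}(0,T;H^{2})$, which is the source of all the difficulties to be circumvented.

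For the first estimate I would test \eqref{Lin:1} (with $m \equiv 1$) by $D \beta \eps \Phi_{u}$, \eqref{Lin:2} by $-\Laplace \Phi_{u}$ and by $D \Xi_{u}$, and \eqref{Lin:3} by $F \Sigma_{u}$, then sum. The crucial observation is that every term carrying a factor $m'(\varphi)$ in the analogous computation for Theorem \ref{thm:Lin:state} vanishes identically; in particular the term $J_{9}$, which alone forced the high-order bound $\nabla(\mu - \chi \sigma) \in L^{4}(Q)$, disappears. The surviving coefficients $g(\sigma) f'(\varphi)$, $f(\varphi) g'(\sigma)$, $h'(\varphi) \sigma$, $h(\varphi)$ and $\Psi''(\varphi)$ are all bounded a.e.\ in $Q$ using $\varphi, \sigma \in C^{0}(\overline{Q})$, and only $\sigma \in L^{2}(0,T;H^{2})$ is required for the Gronwall step. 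Choosing $D$ and then $F$ large enough to absorb the $\norm{\Xi_{u}}_{L^{2}}^{2}$ and $\norm{\nabla \Sigma_{u}}_{L^{2}}^{2}$ terms, and invoking the convex-domain elliptic estimate \eqref{H2EllEst}, yields $\Phi_{u} \in L^{\infty}(0,T;L^{2}) \cap L^{2}(0,T;H^{2}_{N})$, $\Sigma_{u} \in L^{\infty}(0,T;L^{2}) \cap L^{2}(0,T;H^{1})$ and $\Xi_{u} \in L^{2}(Q)$. This already gives the claimed $L^{2}(H^{2})$-regularity of $\Phi_{u}$, so no analogue of the Fourth estimate of Theorem \ref{thm:Lin:state} (which used the $C^{4}$-boundary to reach $L^{2}(H^{3})$) is needed.

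For $\Sigma_{u}$ I would test \eqref{Lin:3} by $(\Sigma_{u})_{t}$ to obtain $\Sigma_{u} \in L^{\infty}(0,T;H^{1}) \cap H^{1}(0,T;L^{2})$, and then read \eqref{Lin:3} as an elliptic equation with right-hand side in $L^{2}(Q)$, applying \cite[Thm.~3.2.1.3]{Grisvard} to conclude $\Sigma_{u} \in L^{2}(0,T;H^{2}_{N})$. For $\Phi_{u}$ and $\Xi_{u}$ I would test \eqref{Lin:1} by $\Xi_{u}$ and \eqref{Lin:2} by $-(\Phi_{u})_{t}$ and sum; once more the $m'(\varphi)$-contributions vanish, both in the coupling term $K_{1}$ and in the bound \eqref{pdt:Phi:u} for $(\Phi_{u})_{t}$, thereby removing the need for the unavailable quantities $\mu \in L^{\infty}(H^{1})$ and $\nabla(\mu - \chi \sigma) \in L^{\infty}(L^{2})$.

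The one place where the reduced regularity genuinely bites — and the step I expect to be the main obstacle — is the counterpart of $K_{3} = - \beta \eps^{-1} \int_{\Omega} \Psi''(\varphi) \Phi_{u} (\Phi_{u})_{t} \dx$, since under \eqref{ass:convex:Psi} we only have $\Psi \in C^{2}$ and can no longer differentiate $\Psi''(\varphi)$ via $\Psi'''$ as in Theorem \ref{thm:Lin:state}. I would resolve this by exploiting the local Lipschitz bound on $\Psi''$ contained in \eqref{ass:convex:Psi}: since $\varphi \in L^{\infty}(0,T;H^{2}) \cap C^{0}(\overline{Q})$, the chain rule for Lipschitz functions gives $\Psi''(\varphi) \in L^{\infty}(0,T;H^{1})$ with $\nabla \Psi''(\varphi) = (\Psi'')'(\varphi) \nabla \varphi$ bounded in $L^{2}$, so that $\norm{\Psi''(\varphi) \Phi_{u}}_{H^{1}} \leq C \norm{\Phi_{u}}_{H^{1}}$ and hence $K_{3} \leq C \norm{(\Phi_{u})_{t}}_{(H^{1})'} \norm{\Phi_{u}}_{H^{1}}$, which is controlled through the simplified \eqref{pdt:Phi:u} together with Young's inequality. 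A Gronwall argument then delivers $\Phi_{u} \in L^{\infty}(0,T;H^{1}) \cap H^{1}(0,T;(H^{1})')$ and $\Xi_{u} \in L^{2}(0,T;H^{1})$, establishing \eqref{Bounds:Lin:Convex}. Uniqueness follows from the linearity of \eqref{Lin:State}: setting $u_{\PP} = u_{\chi} = u_{\CC} = 0$ in the first estimate forces $\Phi_{u} = \Xi_{u} = \Sigma_{u} = 0$ a.e.\ in $Q$.
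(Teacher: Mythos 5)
Your proposal is correct and follows essentially the same route as the paper, which simply reruns the proof of Theorem \ref{thm:Lin:state} observing that all $m'(\varphi)$-contributions (the terms $J_{6}$, $J_{9}$ in the first estimate, and the $\norm{\nabla(\mu - \chi\sigma)}_{L^{\infty}(L^{2})}$-terms in \eqref{pdt:Phi:u}, $K_{1}$ and $K_{3}$) vanish, and that the $H^{3}$-bound \eqref{Apri:lin:4} is replaced by boundedness in $L^{2}(0,T;H^{2})$. Your explicit treatment of $K_{3}$ via the local Lipschitz bound on $\Psi''$ from \eqref{ass:convex:Psi}, in place of the unavailable $\Psi'''$, is a careful refinement of a point the paper leaves implicit, but it does not change the underlying argument.
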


\begin{proof}
The proof is almost the same as the proof of Theorem \ref{thm:Lin:state}, where the terms $J_{6}$ and $J_{9}$ in the first estimate vanish, the terms involving $\norm{\nabla (\mu - \chi \sigma)}_{L^{\infty}(0,T;L^{2})}$ in \eqref{pdt:Phi:u} and also in the estimation of $K_{1}$ and $K_{3}$ vanish, and instead of the $H^{3}(\Omega)$-estimate appearing in \eqref{Apri:lin:4}, we have instead boundedness in $L^{2}(0,T;H^{2})$.
\end{proof}

For the Fr\'{e}chet differentiability of the control-to-state mapping, let us first comment that the variables $(\theta_{u}, \rho_{u}, \xi_{u})$ satisfy \eqref{Fdiff:1}, \eqref{Fdiff:2} and 
\begin{align}
\label{Fdiff:3:Convex}
\inner{(\theta_{u})_{t}}{\zeta}_{H^{1}} & = \int_{\Omega} - \bm{X}_{m} \cdot \nabla \zeta - \nabla \rho_{u} \cdot \nabla \zeta  + X_{\varphi} \zeta \dx.
\end{align}
where in the case of constant mobility
\begin{align*}
\bm{X}_{m} : = - \nabla (\chi \xi_{u} + u_{\chi}(\sigma_{u} - \sigma)).
\end{align*}
From the continuous dependence result \eqref{Ctsdep:result:convex} and the fact that $\varphi, \sigma \in L^{\infty}(0,T;L^{\infty})$, we see that \eqref{Xsigma}, \eqref{Fdiff:xi} remain valid (without the higher order terms $\mathcal{F}^{3}$ and $\mathcal{F}^{6}$, respectively, that arise from a variable mobility).  Meanwhile, in \eqref{bmX:m:est} only the term $L_{1}$ remains for the case of a constant mobility, and so \eqref{Fdiff:Xm} becomes
\begin{align*}
\abs{\int_{0}^{s} \int_{\Omega} \bm{X}_{m} \cdot \nabla \theta_{u} \dx \dt} \leq C \mathcal{F}^{4} + \norm{\nabla \theta_{u}}_{L^{2}(0,s;L^{2})}^{2} + \frac{\chi^{2} n_{1}^{2}}{2} \norm{\nabla \xi_{u}}_{L^{2}(0,s;L^{2})}^{2}.
\end{align*}
Then, following the rest of the proof of Theorem \ref{thm:Fdiff}, we infer the estimates \eqref{Fdiff:est:1}-\eqref{Fdiff:est:3}, and obtain the results of Theorem \ref{thm:Fdiff}.

The unique solvability of the adjoint system for constant mobilities now reads as
\begin{thm}
Under the hypothesis of Theorem \ref{thm:convex:dom}, $\varphi_{Q} \in L^{2}(Q)$, $\varphi_{\Omega} \in L^{2}(\Omega)$, for any $(\PP, \chi, \CC) \in \mathcal{U}_{\mathrm{ad}}$, there exists a unique triplet $(p,q,r)$ associated to $\mathcal{S}(\PP, \chi, \CC) = (\varphi, \mu, \sigma)$ with
\begin{align*}
p & \in L^{2}(0,T;H^{2}_{N}(\Omega)) \cap H^{1}(0,T;(H^{2}_{N}(\Omega))') \cap C^{0}([0,T];L^{2}(\Omega)), \\
q & \in L^{2}(0,T;L^{2}(\Omega)), \\
r & \in L^{2}(0,T;H^{2}_{N}(\Omega)) \cap L^{\infty}(0,T;H^{1}(\Omega)) \cap H^{1}(0,T;L^{2}(\Omega)),
\end{align*}
satisfying $p(T) = \beta_{\Omega}(\varphi_{*}(T) - \varphi_{\Omega})$, $r(T) = 0$ in $L^{2}(\Omega)$, \eqref{adjoint:q},
\begin{align*}
-r_{t} & = \Laplace r - \chi \Laplace p - \CC h(\varphi)r + \PP f(\varphi) g'(\sigma) p \text{ in } Q, 
\end{align*} 
and
\begin{align*}
0 & = \inner{-p_{t}}{\zeta}_{H^{2}} + \int_{\Omega} \beta \eps q \Laplace \zeta  - \beta \eps^{-1} \Psi''(\varphi) q \zeta \dx \\
& \quad + \int_{\Omega} \CC h'(\varphi) \sigma r \zeta - \PP f'(\varphi) g(\sigma) p \zeta  - \beta_{Q}(\varphi - \varphi_{Q}) \zeta \dx
\end{align*}
for a.e. $t \in (0,T)$ and for all $\zeta \in H^{2}_{N}(\Omega)$.
\end{thm}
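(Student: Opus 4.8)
The plan is to reproduce the proof of the variable-mobility adjoint solvability theorem from Section~\ref{sec:Opt} almost verbatim, via a Galerkin approximation in the eigenfunctions of the Neumann--Laplacian (handling the terminal-value structure by integrating backward in time from $s$ to $T$, as in that proof), and to exploit the two features of the present setting: the mobility is constant ($m \equiv 1$, so $m' = m'' = 0$) and only the $H^{2}$-regularity of Theorem~\ref{thm:convex:dom} is available. Since $m \equiv 1$, equation \eqref{adjoint:q} reduces to $q = \Laplace p$ and the term $m'(\varphi) \nabla(\mu - \chi \sigma) \cdot \nabla p$ disappears from the $p$-equation; this is exactly the structural change that will let the whole argument stay inside the $H^{2}$-framework.

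For the first energy estimate I would test the $r$-equation with $r$, the $p$-equation with $p$, and $q = \Laplace p$ with $\beta \eps q$ and with $D p$ (for $D$ to be fixed later), then sum, exactly as in the derivation of \eqref{adjoint:est:pq}. The decisive point is that the group of terms labelled $M_{1}$ there, carrying the factors $m'(\varphi) \varphi_{t}$, $m''(\varphi) \abs{\nabla \varphi}^{2}$, $m'(\varphi) \Laplace \varphi$ and $m'(\varphi) \nabla \varphi$, now vanishes identically; this is precisely the group whose estimation in the smooth case demanded the high-order state bounds $\varphi_{t} \in L^{2}(Q)$, $\Laplace \varphi \in L^{2}(0,T;L^{\infty})$ and $\nabla(\mu - \chi \sigma) \in L^{2}(0,T;L^{\infty})$. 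The surviving contributions are of the type bounded in \eqref{M2:M3} and are controlled using only $\varphi, \sigma \in L^{\infty}(Q)$ (a consequence of $\varphi, \sigma \in C^{0}(\overline{Q})$) and the boundedness of $\Psi''(\varphi)$ a.e.\ in $Q$, the latter following from \eqref{ass:convex:Psi} together with $\varphi \in L^{\infty}(Q)$. Choosing $D$ large enough to absorb $\norm{\nabla p}_{L^{2}}^{2}$ and running the backward Gronwall argument as in \eqref{adjoint:main} should give $p, r \in L^{\infty}(0,T;L^{2}) \cap L^{2}(0,T;H^{1})$ and $q \in L^{2}(Q)$, the analogue of \eqref{adjoint:1}.

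Next I would bootstrap the regularity. From $q = \Laplace p \in L^{2}(Q)$, the homogeneous Neumann condition and convexity of $\Omega$, the elliptic estimate \cite[\S 3, Thm. 3.2.1.3]{Grisvard} gives $p \in L^{2}(0,T;H^{2}_{N})$; hence $\chi \Laplace p \in L^{2}(Q)$, and testing the $r$-equation with $r_{t}$ yields $r_{t} \in L^{2}(Q)$ and $\nabla r \in L^{\infty}(0,T;L^{2})$, after which the same elliptic regularity provides $r \in L^{2}(0,T;H^{2}_{N})$. For the temporal regularity of $p$ I would test its weak form with $\zeta \in L^{2}(0,T;H^{2}_{N})$; because the $m'(\varphi)$ term is absent, the only top-order contribution is $\int_{\Omega} \beta \eps q \Laplace \zeta$, bounded by $\norm{q}_{L^{2}(Q)} \norm{\zeta}_{L^{2}(0,T;H^{2})}$, so that $p_{t} \in L^{2}(0,T;(H^{2}_{N})')$; combined with $p \in L^{2}(0,T;H^{2}_{N})$, the standard embedding $L^{2}(0,T;H^{2}_{N}) \cap H^{1}(0,T;(H^{2}_{N})') \subset C^{0}([0,T];L^{2})$ then gives $p \in C^{0}([0,T];L^{2})$. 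Uniqueness would follow by applying the first estimate to the difference of two solutions with identical terminal data: here the constancy of $m$ makes $\zeta = p$ directly admissible in the weak $p$-equation, so $q \, \Laplace p$ simplifies to $\abs{q}^{2}$ with no need to verify $m(\varphi) p \in H^{2}_{N}$ (the regularity gymnastics of the variable-mobility uniqueness argument), and Gronwall (\cite[Lem. 3.1]{GLNeu} with $\alpha = 0$) forces $p = q = r = 0$.

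I expect the main obstacle to be conceptual rather than computational: one must verify carefully that no step silently reintroduces a demand for regularity of $\varphi$, $\mu$ or $\sigma$ beyond what Theorem~\ref{thm:convex:dom} supplies on the polygonal domain, where $H^{2}$ is the best one can hope for. The clean way to confirm this is to track the $m'$- and $m''$-terms: they are exactly the contributions that, in the smooth-boundary proof, forced $\Laplace \varphi \in L^{\infty}$ and $\nabla(\mu - \chi \sigma) \in L^{\infty}$, and their disappearance for a constant mobility is what keeps every estimate within the $H^{2}$-framework and the convex-domain elliptic theory.
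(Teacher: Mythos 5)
Your proposal is correct and takes essentially the same approach as the paper: its proof of this theorem consists precisely of re-running the variable-mobility Galerkin argument, observing that the term $M_{1}$ in \eqref{adjoint:est:pq} vanishes for a constant mobility, invoking \eqref{M2:M3}, choosing $D > \chi^{2} n_{1}^{2}/2$ to absorb the $\norm{\nabla p}_{L^{2}(s,T;L^{2})}^{2}$ contribution from \eqref{adjoint:est:r}, and then bootstrapping through the convex-domain elliptic regularity of Grisvard and the simplified duality estimate for $p_{t}$. Your extra remarks (boundedness of $\Psi''(\varphi)$ via \eqref{ass:convex:Psi} together with $\varphi \in C^{0}(\overline{Q})$, and the uniqueness argument with $\zeta = p$ made directly admissible by the constancy of $m$) merely make explicit details that the paper leaves implicit.
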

Let us comment that for a constant mobility, the term $M_{1}$ on the right-hand side of \eqref{adjoint:est:pq} vanishes, and by taking into account the estimate \eqref{M2:M3} for $M_{2} + M_{3}$, we arrive at
\begin{align*}
-\frac{\dd}{\dt} \frac{1}{2} \norm{p}_{L^{2}}^{2} + \frac{3\beta \eps}{4} \norm{q}_{L^{2}}^{2} + D \norm{\nabla p}_{L^{2}}^{2} \leq C ( 1 + D^{2}) \left ( \norm{p}_{L^{2}}^{2} + \norm{r}_{L^{2}}^{2} + \norm{\varphi - \varphi_{Q}}_{L^{2}}^{2} \right ).
\end{align*}
Then, adding the above inequality to \eqref{adjoint:est:r} and choosing $D > \frac{\chi^{2} n_{1}^{2}}{2}$ yields \eqref{adjoint:1}.  Subsequently, by elliptic regularity we have \eqref{adjoint:2} and a slight modification taking into account the constant mobility, we also have \eqref{adjoint:3}.

We now state the first order necessary optimality conditions for a constant mobility.
\begin{thm}
Under the hypothesis of Theorem \ref{thm:convex:dom}, $\varphi_{Q} \in L^{2}(Q)$, $\varphi_{\Omega} \in L^{2}(\Omega)$, let $(\PP_{*}, \chi_{*}, \CC_{*}) \in \mathcal{U}_{\mathrm{ad}}$ denote a minimizer to \eqref{Minimizer} with corresponding state variables $(\varphi_{*}, \mu_{*}, \sigma_{*})$ and adjoint variables $(p,q,r)$.  Then, $(\PP_{*}, \chi_{*}, \CC_{*})$ necessarily satisfies
\begin{align*}
& \int_{0}^{T} \int_{\Omega} (a - \PP_{*}) f(\varphi_{*}) g(\sigma_{*})p + (b - \chi_{*}) \nabla \sigma_{*} \cdot \nabla p - (c - \CC_{*}) h(\varphi_{*}) \sigma_{*} r \dx \dt \\
& \quad + \beta_{\PP} \PP_{*} (a - \PP_{*}) + \beta_{\chi} \chi_{*} (b - \chi_{*}) + \beta_{\CC} \CC_{*} (c - \CC_{*}) \geq 0
\end{align*}
for all $(a,b,c) \in \mathcal{U}_{\mathrm{ad}}$.
\end{thm}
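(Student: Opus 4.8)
The plan is to mimic the computation establishing the necessary optimality conditions in the variable-mobility case, specialising to $m \equiv 1$ so that every term containing $m'(\varphi)$ drops out and $q = \Laplace p$ by the adjoint relation \eqref{adjoint:q}. Since the constant-mobility Fr\'{e}chet differentiability result yields the same reduced functional $\mathcal{J}$ as in the smooth setting, the starting point is the gradient inequality
\begin{align*}
\mathrm{D} \mathcal{J}(\PP_{*}, \chi_{*}, \CC_{*})[\bm{u}] & = \int_{0}^{T} \int_{\Omega} \beta_{Q}(\varphi_{*} - \varphi_{Q}) \Phi_{u} \dx \dt + \int_{\Omega} \beta_{\Omega}(\varphi_{*}(T) - \varphi_{\Omega}) \Phi_{u}(T) \dx \\
& \quad + \beta_{\PP} \PP_{*} u_{\PP} + \beta_{\chi} \chi_{*} u_{\chi} + \beta_{\CC} \CC_{*} u_{\CC} \geq 0,
\end{align*}
valid for every admissible direction $\bm{u} = (a - \PP_{*}, b - \chi_{*}, c - \CC_{*})$ because $(\PP_{*}, \chi_{*}, \CC_{*})$ is a minimizer. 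The objective is to eliminate the two $\Phi_{u}$-terms in favour of explicit expressions in $u_{\PP}, u_{\chi}, u_{\CC}$, using the linearized state equations \eqref{Lin:1:weak:Convex}, \eqref{Lin:2}, \eqref{Lin:3} and the constant-mobility adjoint system.

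First I would test the adjoint weak form for $p$ with $\zeta = \Phi_{u}$, which is admissible since $\Phi_{u} \in L^{2}(0,T;H^2_N(\Omega))$ by Theorem \ref{thm:Lin:state:Convex}; integrating over $(0,T)$ and integrating by parts in time (using $\Phi_{u}(0) = 0$ and $p(T) = \beta_{\Omega}(\varphi_{*}(T) - \varphi_{\Omega})$) converts the $\inner{-p_{t}}{\Phi_{u}}_{H^2}$ pairing into the endpoint term plus $\int_{0}^{T} \inner{(\Phi_{u})_{t}}{p}_{H^1}$. Using \eqref{Lin:2} to recognise $\beta \eps \Laplace \Phi_{u} - \beta \eps^{-1} \Psi''(\varphi) \Phi_{u} = -\Xi_{u}$ rewrites the $q$-contribution as $-\Xi_{u} q$. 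Next I would test \eqref{Lin:1:weak:Convex} with $\zeta = p$; the pairings $\int_{0}^{T} \inner{(\Phi_{u})_{t}}{p}_{H^1}$ appearing in both identities then cancel upon combining them. The crucial simplification for constant mobility is that $\int_{\Omega} \nabla \Xi_{u} \cdot \nabla p = -\int_{\Omega} \Xi_{u} \Laplace p = -\int_{\Omega} \Xi_{u} q$ by \eqref{adjoint:q} and the homogeneous Neumann conditions, so the $\Xi_{u}$-terms cancel exactly. This leaves an identity whose right-hand side contains only $\Sigma_{u}$-terms, a $\CC_{*} h'(\varphi_{*}) \sigma_{*} \Phi_{u} r$-term, and the explicit source contributions $\int_{0}^{T} \int_{\Omega} u_{\PP} f(\varphi_{*}) g(\sigma_{*}) p \dx \dt$ and $\int_{0}^{T} \int_{\Omega} u_{\chi} \nabla \sigma_{*} \cdot \nabla p \dx \dt$.

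The remaining step is to dispose of the $\Sigma_{u}$-terms and the $\Phi_{u} r$-term. For this I would test \eqref{Lin:3} with $r$ and the constant-mobility adjoint equation $-r_{t} = \Laplace r - \chi \Laplace p - \CC h(\varphi) r + \PP f(\varphi) g'(\sigma) p$ with $\Sigma_{u}$; integrating by parts in time (using $\Sigma_{u}(0) = 0$, $r(T) = 0$) and in space produces a second identity in which the $\CC_{*} h(\varphi_{*}) \Sigma_{u} r$ and $\PP_{*} f(\varphi_{*}) g'(\sigma_{*}) p \Sigma_{u}$ contributions match those in the first identity and cancel on substitution, while the $-\chi \Laplace p$ term yields $\int_{0}^{T} \int_{\Omega} \chi_{*} \nabla p \cdot \nabla \Sigma_{u} \dx \dt$ which cancels the corresponding linearized contribution. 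After these cancellations one is left with the reduced-gradient identity
\begin{align*}
& \int_{0}^{T} \int_{\Omega} \beta_{Q}(\varphi_{*} - \varphi_{Q}) \Phi_{u} \dx \dt + \int_{\Omega} \beta_{\Omega}(\varphi_{*}(T) - \varphi_{\Omega}) \Phi_{u}(T) \dx \\
& \quad = \int_{0}^{T} \int_{\Omega} u_{\chi} \nabla \sigma_{*} \cdot \nabla p + u_{\PP} f(\varphi_{*}) g(\sigma_{*}) p - u_{\CC} h(\varphi_{*}) \sigma_{*} r \dx \dt,
\end{align*}
which is the $m \equiv 1$ analogue of the identity obtained in the variable-mobility case.

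Substituting this into the gradient inequality above and recalling $u_{\PP} = a - \PP_{*}$, $u_{\chi} = b - \chi_{*}$, $u_{\CC} = c - \CC_{*}$ yields exactly the asserted variational inequality. I expect the main obstacle to be purely bookkeeping: one must justify the admissibility of each test function in the duality pairings (guaranteed by $\Phi_{u}, p \in L^{2}(0,T;H^2_N(\Omega))$ and $\Sigma_{u}, r \in L^{2}(0,T;H^2_N(\Omega)) \cap H^{1}(0,T;L^{2}(\Omega))$ from the preceding theorems), verify the validity of the integration-by-parts-in-time formula across the Gelfand triples $H^2_N(\Omega) \hookrightarrow L^{2}(\Omega) \hookrightarrow (H^2_N(\Omega))'$ and $H^{1}(\Omega) \hookrightarrow L^{2}(\Omega) \hookrightarrow (H^{1}(\Omega))'$, and track the signs carefully so that the $\Xi_{u}$-, $\Sigma_{u}$- and $\Phi_{u} r$-cancellations go through. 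No genuinely new estimate is required beyond those already established for the constant-mobility setting.
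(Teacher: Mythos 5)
Your proposal is correct and follows exactly the route the paper intends for this theorem: the paper states it without a separate proof, relying on the reader to rerun the variable-mobility argument (test the adjoint $p$-equation with $\Phi_u$, the linearized $\varphi$-equation with $p$, the linearized $\sigma$-equation with $r$ and the adjoint $r$-equation with $\Sigma_u$, then combine) with $m \equiv 1$, which is precisely what you do, including the key cancellation $\int_\Omega \nabla \Xi_u \cdot \nabla p + \Xi_u q \dx = 0$ via $q = \Laplace p$ and the Neumann condition. Your bookkeeping of the regularity needed for the duality pairings ($\Phi_u \in L^2(0,T;H^2_N(\Omega))$ sufficing as test function in the adjoint weak form) is also the correct adaptation to the lower regularity of the polygonal-domain setting.
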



\section{A finite element approximation}
\label{sec:discreteScheme}
To apply the analytical results derived in the previous sections to numerical examples, 
in this section we state a numerical scheme for seeking a solution of  
the parameter identification optimal control problem \eqref{prob:Min:P}.  
It employs a semi-implicit discretization with respect to time and a finite element discretization with respect to space.

\subsection{The fully discrete scheme}
Let $0 = t_0 < t_1 < \ldots < t_{k-1}< t_k <\ldots < t_K = T$ denote a subdivision of $I=[0,T]$. 
 At time $t_k$ let $\mathcal T^k_h = \{T^k_i\}_{i=1}^{N_k}$ denote a subdivision of $\Omega$ 
 into closed cells $T^k_i$ such that $\bigcup_{i=1,\ldots,N} T_i = \overline \Omega$ hold, i.e., we assume $\Omega$ is a bounded domain with polygonal boundary.  On $\mathcal T^k_h$ we define the finite element space $V^k_h$
 as
\begin{align*}
V^k_h = \{v \in C(\overline\Omega) \, | \, v|_{T^k_i} \in P^1(T_i^k),\, i=1,\ldots,N_k\},
\end{align*}
where $P^1(T)$ denote the set of linear functions defined on $T$, i.e., 
the set of standard piecewise linear and continuous finite elements. 
 Note that for a suitable approximation of the phase field variable $\varphi$, 
 adaptive meshing is indispensable as we expect that $\varphi$ takes constant values 
 in large areas of the domain and $\nabla \varphi$ is only non-zero in a thin region.  
 Thus we consider different meshes for $\overline{\Omega}$ at each time instance.

Let us point out that there is an inconsistency with the regularity of 
the boundary $\pd \Omega$ for the numerical simulations and for the
 analytical results of Sections~\ref{sec:State} - \ref{sec:Opt}.  
 The former is Lipschitz, while for the latter we require $C^{4}$-regularity.  
 However, in the numerical simulations we observe that $(\varphi, \mu, \sigma)$
are constant in a neighbourhood of $\pd \Omega$, therefore we can restrict our 
attention to a subset $\Omega_{*} \subset \Omega$ which 
has a $C^{4}$-boundary and contains the evolution of the tumour.  
Then, the analytical results on well-posedness and optimal conditions apply in the restricted domain $\Omega_{*}$.

Denoting by $\varphi^k_h, \mu^k_h, \sigma^k_h \in V^k_h$ the discrete approximations of $\varphi$, $\mu$ and $\sigma$ at time instance $t_{k}$, respectively,  
we introduce the abbreviations 
\begin{align*}
\varphi_{\tau,h} := (\varphi^k_h)_{k=1}^K, \quad 
\mu_{\tau,h} := (\mu^k_h)_{k=1}^K, \quad 
\sigma_{\tau,h} := (\sigma^k_h)_{k=1}^K.
\end{align*}
Next we define the numerical scheme for the numerical approximation of
\eqref{CH:1}-\eqref{CH:2} on time instance $t_k$. 
 Let $\varphi^{k-1}, \sigma^{k-1} \in V^{k-1}_h$ be given, 
 and let $I_h^k : C(\overline \Omega) \to V^k_h$ denote the Lagrangian interpolation operator.  
 On time instance $t_k$, for $\tau := t_{k} - t_{k-1}$, 
 we search for $\varphi^k_h,\mu^k_h,\sigma^k_h \in V^k_h$ such that for all $v \in V^k_h$:
\begin{subequations}
\label{eq:FD:scheme}
\begin{alignat}{3}
\notag (\varphi^k_h,v) + \tau (m(I_h^k\varphi^{k-1}) \nabla \mu^k_h,\nabla v) & = (I_h^k \varphi^{k-1},v)  + \tau \PP (f(I_h^k\varphi^{k-1})g(\sigma_h^k),v) \\
& \quad - \tau \chi(m(I_h^k\varphi^{k-1})\nabla \sigma^k_h, \nabla v) \label{eq:FD:1_CH}\\
 \eps \beta(\nabla \varphi_h^k, \nabla v)
  + \frac{\beta}{\eps}( \Psi'(\varphi_h^k),v)^h - (\mu_h^k,v) & = 0, \label{eq:FD:2_CH}\\
(\sigma^k_h,v) + \tau (\nabla \sigma^k_h,\nabla v) & = (I_h^k\sigma^{k-1},v) - \tau \CC(h(I_h^k\varphi^{k-1})\sigma^k_h,v),
  \label{eq:FD:3_NU}
\end{alignat}
\end{subequations}
where $(\phi,\psi) = \int_{\Omega} \phi \, \psi \dx$ denotes the $L^{2}(\Omega)$-inner product.  For $k=1$ we set $\varphi^0 := \Pi_h \varphi_0$ and $\sigma^0 := \Pi_h \sigma_0$, where
$\Pi_h$ denotes the $L^2$-projection onto $V^1_h$.  
In \eqref{eq:FD:2_CH} we use lumped integration $(u,v)^h = \int_\Omega I_h^k(uv)\dx$ 
for the integral involving $\Psi'(\varphi_{h}^{k})$. 

For the potential term $\Psi$ the double obstacle free energy density \cite{BloweyElliott, HHT}
\begin{align*}
\Psi_{\mathrm{do}}(\varphi) = \frac{1}{2}(1-\varphi^{2}) + I_{[-1,1]}(\varphi), \quad
I_{[-1,1]}(\varphi) = \begin{cases} 0 & \text{ if } \varphi \in [-1,1], \\
+\infty & \text{ otherwise},
\end{cases}
\end{align*}
would be an ideal choice since it has the property that $\varphi$ remains in the physically relevant interval $[-1,1]$. 
 However, for regularity reasons in the numerical simulations we work with a
 relaxed double obstacle potential $\Psi$ which is composed of a concave part
 $\Psi_{-}(\varphi) := \frac{1}{2}(1-\varphi^{2})$ 
 and a convex part $\Psi_{+}(\varphi) := \frac{s}{2}
 \Lambda_{\rho}(\varphi)$, where for positive constants $s \gg 0$ and
 $\rho > 0$, we define
$\Lambda_\rho^\prime(\varphi) = \lambda_{\rho}(\varphi) :=
\max_{\rho}(0,\varphi-1) + \min_{\rho}(0, \varphi+1)$,
with regularized max and min functions as considered in \cite[(2.5)]{HK}. 
Note that the parameter $s$ allows us to control
the violation of the bound $\varphi \in [-1,+1]$.

\begin{thm}
Let $\varphi_0,\sigma_0 \in H^1(\Omega)$ be given. Then there exists
a solution $(\varphi_h^k,\mu_h^k,\sigma_h^k)_{k=1}^K \in (V^k_h)_{k=1}^K$ to 
\eqref{eq:FD:scheme} for $k=1,\ldots,K$.
If $\tau$ is sufficiently small, then this solution is unique. 
 Moreover, there exists a constant $C>0$ depending only on $\PP$, $\chi$, $\CC$, $\norm{\varphi_{0}}_{H^{1}}$ and $\norm{\sigma_{0}}_{H^{1}}$, such that
\begin{align}
  \|\varphi_{\tau,h}\|_{l^2(H^1(\Omega))}
  +\|\mu_{\tau,h}\|_{l^2(H^1(\Omega))}
  +\|\sigma_{\tau,h}\|_{l^2(H^1(\Omega))} \leq C.
  \label{eq:FD:bnd}
\end{align}
\end{thm}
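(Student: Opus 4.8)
The plan is to treat each time step $k$ separately (the scheme marches forward in $k$) and to establish, in order, (i) existence of $(\varphi_h^k,\mu_h^k,\sigma_h^k)$, (ii) the a priori bound \eqref{eq:FD:bnd}, and (iii) uniqueness for small $\tau$. The structural observation driving the argument is that \eqref{eq:FD:3_NU} \emph{decouples}: for given $\varphi^{k-1},\sigma^{k-1}$ it is a linear equation for $\sigma_h^k$ whose bilinear form $(\sigma,v) + \tau(\nabla\sigma,\nabla v) + \tau\CC(h(I_h^k\varphi^{k-1})\sigma,v)$ is coercive on $V^k_h$ (using $h\geq 0$ from \eqref{assump:h}), so Lax--Milgram yields a unique $\sigma_h^k$ for any $\tau>0$. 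With $\sigma_h^k$ a known datum, \eqref{eq:FD:1_CH}--\eqref{eq:FD:2_CH} become a nonlinear finite-dimensional system for $(\varphi_h^k,\mu_h^k)$; since the relaxed potential has $\Psi'(\varphi) = -\varphi + \tfrac{s}{2}\lambda_\rho(\varphi)$ Lipschitz continuous, the defining map is continuous, and I would obtain a solution from the standard corollary of Brouwer's fixed-point theorem, the coercivity on a large ball being supplied by the a priori estimate below (with the mean of $\mu_h^k$ controlled as described there).

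For the bound I would mimic the continuous energy law. Testing \eqref{eq:FD:1_CH} with $v=\mu_h^k$ and \eqref{eq:FD:2_CH} with $v=\varphi_h^k - I_h^k\varphi^{k-1}$ and adding eliminates the pairing $(\mu_h^k,\varphi_h^k - I_h^k\varphi^{k-1})$; the identity $2a\cdot(a-b) = \abs{a}^2 - \abs{b}^2 + \abs{a-b}^2$ on the Dirichlet term and the convex--concave splitting $\Psi = \Psi_{+}+\Psi_{-}$ (convexity of $\Psi_{+}$ gives, nodewise under lumped integration, $\Psi_{+}'(\varphi_h^k)(\varphi_h^k-\varphi^{k-1})\geq \Psi_{+}(\varphi_h^k)-\Psi_{+}(\varphi^{k-1})$, while $\Psi_{-}''\equiv -1$ renders the concave part exactly) produce a telescoping discrete energy $E_h^k := \tfrac{\eps\beta}{2}\norm{\nabla\varphi_h^k}_{L^2}^2 + \tfrac{\beta}{\eps}\int_\Omega I_h^k\Psi(\varphi_h^k)\dx$ together with the dissipation $\tau\norm{m^{1/2}(I_h^k\varphi^{k-1})\nabla\mu_h^k}_{L^2}^2$. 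Testing \eqref{eq:FD:3_NU} with $\sigma_h^k$ gives the separate, $\tau$-independent inequality $\tfrac12\norm{\sigma_h^k}_{L^2}^2 + \tfrac12\norm{\sigma_h^k-\sigma^{k-1}}_{L^2}^2 + \tau\norm{\nabla\sigma_h^k}_{L^2}^2 \leq \tfrac12\norm{\sigma^{k-1}}_{L^2}^2$, which already yields $\sigma$ bounded in $l^\infty(L^2)\cap l^2(H^1)$. The chemotaxis term $\tau\chi(m(I_h^k\varphi^{k-1})\nabla\sigma_h^k,\nabla\mu_h^k)$ and the growth term $\tau\PP(f(I_h^k\varphi^{k-1})g(\sigma_h^k),\mu_h^k)$ are absorbed by Young's inequality, $\nabla\mu_h^k$ into the dissipation and $\nabla\sigma_h^k$ into the $\sigma$-estimate; the $L^2$-part of $\mu_h^k$ needed for the growth term is handled by first choosing $v=1$ in \eqref{eq:FD:2_CH}, which with $\abs{\Psi'(s)}\leq R_3(1+\Psi(s))$ from \eqref{ass:convex:Psi} gives $\abs{\mean{\mu_h^k}}\leq C(1+\norm{\Psi(\varphi_h^k)}_{L^1})$, and then by Poincar\'e's inequality. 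Summing over $k$ and applying the discrete Gronwall lemma \cite[Lem. 3.1]{GLNeu}, with the $H^1$-coercivity $\Psi(s)\geq R_1\abs{s}^2-R_2$ bounding $E_h^k$ from below, delivers $\varphi\in l^\infty(H^1)$ and $\mu,\sigma\in l^2(H^1)$, hence \eqref{eq:FD:bnd}.

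The main obstacle, I expect, is precisely this energy estimate: because $\Psi'$ is treated \emph{fully implicitly}, the concave part contributes the destabilizing term $-\tfrac{\beta}{2\eps}\norm{\varphi_h^k-I_h^k\varphi^{k-1}}_{L^2}^2$ (up to mass-lumping equivalence) with the wrong sign. To absorb it I would use the smallness of the time increment: testing \eqref{eq:FD:1_CH} against $V^k_h$ shows $\norm{\varphi_h^k-I_h^k\varphi^{k-1}}_{(V^k_h)'}\leq C\tau(1+\norm{\nabla\mu_h^k}_{L^2}+\norm{\nabla\sigma_h^k}_{L^2})$, and the interpolation $\norm{w}_{L^2}^2\leq\norm{w}_{(V^k_h)'}\norm{w}_{H^1}$ for $w=\varphi_h^k-I_h^k\varphi^{k-1}\in V^k_h$ converts the destabilizing term into an $O(\tau)$ quantity reabsorbed into the dissipation and the Gronwall step. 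This is what keeps $C$ independent of $\tau$ and $h$; the only other bookkeeping is the transfer between the adaptive meshes $\mathcal{T}^{k-1}_h$ and $\mathcal{T}^{k}_h$, whose interpolation errors are controlled by the $H^1$-stability of $I_h^k$.

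Finally, for uniqueness with $\tau$ small, let $(\varphi_i,\mu_i,\sigma_i)$, $i=1,2$, solve the step with identical data at $k-1$. Coercivity of \eqref{eq:FD:3_NU} forces $\sigma_1^k=\sigma_2^k$, so in the differences $\hat\varphi:=\varphi_1^k-\varphi_2^k$, $\hat\mu:=\mu_1^k-\mu_2^k$ all source and chemotaxis terms drop. Choosing $v=1$ in the difference of \eqref{eq:FD:1_CH} shows $\hat\varphi$ has zero mean; testing that difference with $\hat\mu$ and the difference of \eqref{eq:FD:2_CH} with $\hat\varphi$, then using monotonicity of $\Psi_{+}'$ and $\Psi_{-}'(\varphi)=-\varphi$, gives $\eps\beta\norm{\nabla\hat\varphi}_{L^2}^2 + \tau\norm{m^{1/2}\nabla\hat\mu}_{L^2}^2 \leq \tfrac{\beta}{\eps}(\hat\varphi,\hat\varphi)^h$, while testing the difference of \eqref{eq:FD:1_CH} with $\hat\varphi$ gives $\norm{\hat\varphi}_{L^2}^2\leq \tau n_1\norm{\nabla\hat\mu}_{L^2}\norm{\nabla\hat\varphi}_{L^2}$. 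Substituting the second into the first (bounding $(\hat\varphi,\hat\varphi)^h$ by $C\norm{\hat\varphi}_{L^2}^2$) and using Young's inequality yields $\tfrac{\eps\beta}{2}\norm{\nabla\hat\varphi}_{L^2}^2 + (\tau n_0 - C\tau^2)\norm{\nabla\hat\mu}_{L^2}^2 \leq 0$, which forces $\nabla\hat\varphi=\nabla\hat\mu=0$ as soon as $\tau<n_0/C$; the zero-mean property of $\hat\varphi$ then gives $\hat\varphi=0$ and hence $\hat\mu=0$. This is the only place where the smallness of $\tau$ enters.
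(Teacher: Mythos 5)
Your proposal is correct, and its skeleton matches the paper's: the decoupling of \eqref{eq:FD:3_NU} with Lax--Milgram for $\sigma_h^k$ (coercivity from $h\geq 0$), then the Cahn--Hilliard step for $(\varphi_h^k,\mu_h^k)$, then per-step bounds summed over $k$. The difference is that the paper's proof is essentially a pointer: it cites standard Cahn--Hilliard results \cite{Blank} for existence and uniqueness of the $(\varphi_h^k,\mu_h^k)$-step and disposes of \eqref{eq:FD:bnd} with the single sentence that summing the per-step bounds yields the estimate, whereas you supply the actual content — a Brouwer-type existence argument, the discrete energy law with the convex--concave splitting of the relaxed double-obstacle potential, and an explicit small-$\tau$ uniqueness proof. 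Your care with the fully implicit term $\Psi'(\varphi_h^k)$ is a genuine and arguably necessary addition: the reference the paper leans on treats the concave part explicitly, for which the energy estimate is unconditional, while the scheme \eqref{eq:FD:2_CH} as written is fully implicit, so the destabilizing contribution $-\tfrac{\beta}{2\eps}\norm{\varphi_h^k - I_h^k\varphi^{k-1}}_{L^2}^2$ you isolate is really present, and your dual-norm interpolation absorption (at the price of an $h$-independent smallness condition on $\tau$) is a correct way to remove it; the paper never confronts this, and your version makes visible that the $\tau$- and $h$-uniform constant in \eqref{eq:FD:bnd} implicitly carries such a condition. Your uniqueness argument (zero mean of $\hat\varphi$ from $v=1$, testing with $\hat\mu$ and $\hat\varphi$, monotonicity of $\Psi_+'$, and absorption for $\tau$ small) is the standard one and is correct as written. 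Two minor imprecisions, neither fatal: first, for existence you should not let the Brouwer coercivity rest on the energy estimate, since that estimate needs $\tau$ small while the theorem asserts existence for every $\tau$; the quadratic growth $\Psi(s)\geq R_1\abs{s}^2-R_2$ from \eqref{ass:convex:Psi} already makes the per-step problem coercive for arbitrary $\tau$, so use that instead. Second, the $L^2$/$H^1$-stability of the transfer operator $I_h^k$ between nonnested adaptive meshes, which you mention only in passing, must hold with constants that do not accumulate over the $K=T/\tau$ steps for the telescoping to survive; the paper glosses over this point as well.
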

\begin{proof}
The unique existence of $\sigma^k_h$ for $k=1,\ldots,K$ follows from Lax--Milgram's theorem. Note
that \eqref{eq:FD:3_NU} is decoupled from \eqref{eq:FD:1_CH}--\eqref{eq:FD:2_CH} at every time
instance.
Then the existence of a solution $(\varphi^k_h,\mu^k_h)$ to \eqref{eq:FD:1_CH}--\eqref{eq:FD:2_CH} at every time instance follows from standard results for the Cahn--Hilliard equation \cite{Blank}.  This solution is unique for small $\tau$.  From the above, the solution is bounded in every time instance and upon summing yields the estimate \eqref{eq:FD:bnd}.
\end{proof}

\subsection{The discrete optimization problem and solution approach}
Now we can define the fully discrete analogue to our inverse problem, namely
\begin{equation}
  \tag{$P_h$}\label{prob:FD:Ph}
  \begin{aligned}
    \min_{(\PP,\chi,\CC) \, \in \, \mathcal{U}_{\mathrm{ad}} } J(\varphi_{\tau,h}, \PP,\chi,\CC)
& := \frac{\beta_Q}{2}\| \varphi_{\tau,h}-\varphi_Q\|^2_{L^2(Q)} +
\frac{\beta_\Omega}{2}\|\varphi^K_h - \varphi_\Omega\|^2_{L^2(\Omega)}\\
& \qquad + \frac{\beta_\PP}{2}|\PP-\PP_d|^2 + \frac{\beta_\chi}{2}|\chi-\chi_d|^2
    + \frac{\beta_\CC}{2}|\CC-\CC_d|^2\\
& \text{subject to } (\varphi_{\tau,h},\mu_{\tau,h}, \sigma_{\tau,h}) \text{ solving }
    \eqref{eq:FD:1_CH}-\eqref{eq:FD:3_NU}.  \end{aligned}
\end{equation}

The existence of at least one minimizer for \eqref{prob:FD:Ph} follows from the direct method as for
the continuous problem in Theorem~\ref{thm:mini}.  To actually find a minimizer for \ref{prob:FD:Ph} we use a Gauss--Newton approach in a trust region
frame work, following \cite{BBV, NocedalWright}.
At every step of this algorithm, we solve a linear-quadratic minimization problem obtained 
by substituting 
$\varphi_{\tau,h}$ in \eqref{prob:FD:Ph} by its linearization with respect to $\PP,\chi,\CC$ at the
current iterate.
We couple this with a trust-region approach to
restrict the lengths of the resulting steps, which guarantees that the linear-quadratic minimization problem is approximating \eqref{prob:FD:Ph} sufficiently well. We stress that such a sensitivity approach is
feasible as we only consider three controls.
We skip the linearization here for brevity, but it is the discrete analogue to \eqref{Lin:State}.
We stop the optimization routine as soon as $|\nabla J| \leq 10^{-2}$ or when the relative change
of the current iterate $(\PP^i,\chi^i,\CC^i)$ is smaller then $10^{-4}$.

\section{Numerical experiments}
\label{sec:Numerics}

Let us present numerical examples to illustrate our approach.
The implementation is written in C++ using the finite element toolbox FEniCS \cite{fenics_book}
and meshes provided by the finite element toolbox  ALBERTA \cite{alberta_book}.
Let us first specify some aspects of the implementation.

\subsection{Adaptive meshing} \label{ssec:N:adapt}
As the functions $\varphi$, $\mu $, and $\sigma$ may undergo large variations in small regions, such as the growing front of the tumour, adaptive meshing is necessary.
Here we use the sum of the $L^2$-norms of the jumps of the gradients of $\varphi_h$, $\mu_h$,
and $\sigma_h$ across edges in normal direction as indicator and apply a D\"orfler marking scheme
\cite{Doerfler, HHK, Verfuerth} to adapt the mesh at every time instance before proceeding 
to the next time instance.
One might also apply residual based error estimation as proposed for different phase field models in \cite{HHT,GarHK_CHNS_AGG_linearStableTimeDisc}. 
We fix $V_{\min} =  \frac{1}{2}(\frac{\pi \eps}{R})^2$ as smallest volume present in the
computational mesh, where $R$ denotes a resolution of the interfacial region.
Note that the
transition zone from $\varphi \approx -1$ to $\varphi \approx +1$ 
has a width of approximately $\pi \eps$, and $V_{\min}$ is chosen such that we resolve this zone
with $R=16$ elements.

\subsection{The fixed parameters}
We set $\Omega = (-5,5)^2$ and $T=8$. We resolve the time interval $I = [0,T]$ with steps of length
$\tau_k \equiv \tau = 0.05$, i.e., $K=160$, and the spatial domain with 50 triangles per
spatial direction as a macro triangulation with cells of size $V_{\max} = 0.01$, that we adapt
locally according to Section~\ref{ssec:N:adapt}. We further fix $\eps = 0.05$ and $\beta = 0.05$.
For the free energy density $\Psi$ we fix $s = 10^{4}$ and $\rho = 0.001$.

The functions $f$, $g$, $h$, and $m$ are  given by
\begin{align*}
  f(x)& = \frac{1}{2}(\cos (\pi \min(1,\max(x,-1))) +1 ),\\
  g(x)& = 
  \begin{cases}
  0 & \mbox{ if } x\leq 0,\\
  x^2(-\theta^{-2}x+2\theta^{-1}) & \mbox{ if } 0 < x<\theta,\\
  x & \mbox{ if } \theta \leq x \leq M-\theta,\\
  -\theta^{-2}(x-M)^3 -2\theta^{-1}(x-M)^2 + M & \mbox{ if } M-\theta < x < M,\\
  M & \mbox{ if } x\geq M,  
  \end{cases}\\
  h(x) &= \frac{1}{2}\left(\sin\left(\frac{\pi}{2} \min(1,\max(x,-1)) \right) + 1\right),\\
  m(x) &= (m_1-m_0)f(x) + m_0.
\end{align*}
Here $M = 10$ is a maximum amount of nutrition that can be used for proliferation,
 $\theta = 0.01$, and $m_1 = 1.0$, and $m_0 = 10^{-4}$.
 
The function $f$ is a smooth indicator function for the interface between tumour and healthy cells,
while the function $h$ is a smooth indicator function for the tumour cells, and $g$ is a smooth
cut-off function to limit the maximum amount of nutrients used for proliferation.
The mobility $m$ is chosen to be nearly-degenerate at $x = \pm 1$ in order to limit the growth of the tumour due to chemotaxis.  Finally the initial data is taken to be $\sigma_0 = 1$ uniformly in the domain, and 
\begin{align*}
  z_0& := \mbox{arctan}(\sqrt{s-1}),\\
  \Phi_0(z)& := 
  \begin{cases}
    -\Phi_0(-z) & \mbox{ if } z < 0,\\
    \sqrt{\frac{s}{s-1}}\sin(z) & \mbox{ if } 0 \leq z \leq z_0,\\
    \frac{1}{s-1}\left(s-\exp( \sqrt{s-1}(z_0-z)  )\right) & \mbox { else}, 
  \end{cases}\\
  \varphi_0(x)& = \Phi_0( \eps^{-1}(\|x\|_{l^8}  - 1)).
\end{align*}
Here $\Phi_0$ is the first order approximation of $\varphi$ for $\Psi$ 
(with $\rho = 0$), 
 and
$\varphi_0$ describes a rounded square centred at the origin which is realized by the unit circle in the $l^8$ norm.
Note that for $\rho=0$, as $s \to \infty$, $\Psi$ tends to the double-obstacle free energy
$\Psi_{do}$ and $\Phi_0$ tends to its well-known sinus-shaped optimal profile.

\subsection{The desired states}
\label{ssec:desiredState}
Currently we use synthetic data for the desired states $\varphi_Q$ and
$\varphi_\Omega$, i.e., we solve the system \eqref{eq:FD:scheme} for a given set of
parameters $(\PP,\chi,\CC)$.
Since in real-world applications such a function would be generated from measurements and thus
contains noise, we also consider adding uniformly distributed white noise with magnitude $\delta$
point wise at each nodes of the triangulation.

We generate the data $\varphi_Q := (\varphi_Q)_{\tau,h}$ and $\varphi_\Omega := (\varphi_Q)_h^K$
using the parameters $\PP = 7$, $\chi = 6$, and $\CC = 2$. 
In Figure~\ref{fig:N:cd} we show snapshots of the evolution without noise. Note that black and white
correspond to $\varphi \approx 1$ (tumour) and $\varphi \approx -1$ (non-tumour), respectively.
\begin{figure}
  \centering
  \fbox{
  \includegraphics[width=0.15\textwidth]{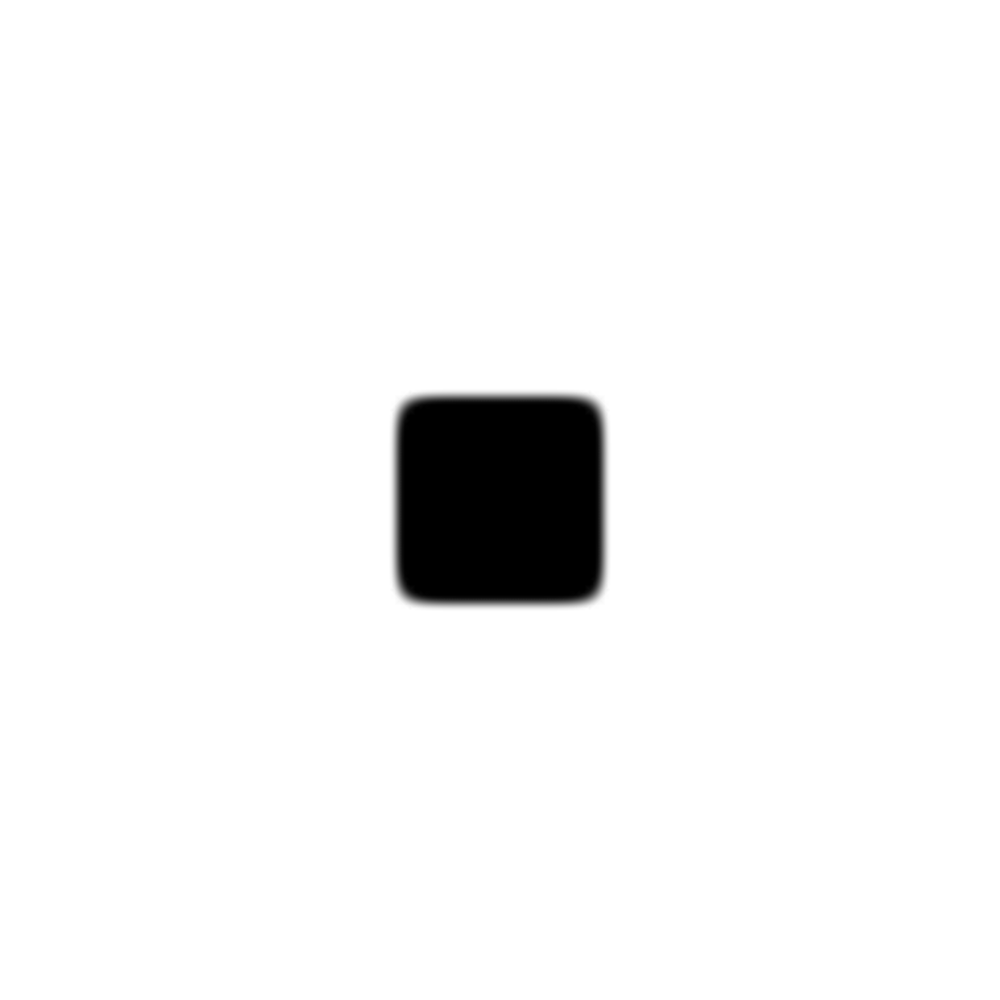}
  }
  \fbox{
  \includegraphics[width=0.15\textwidth]{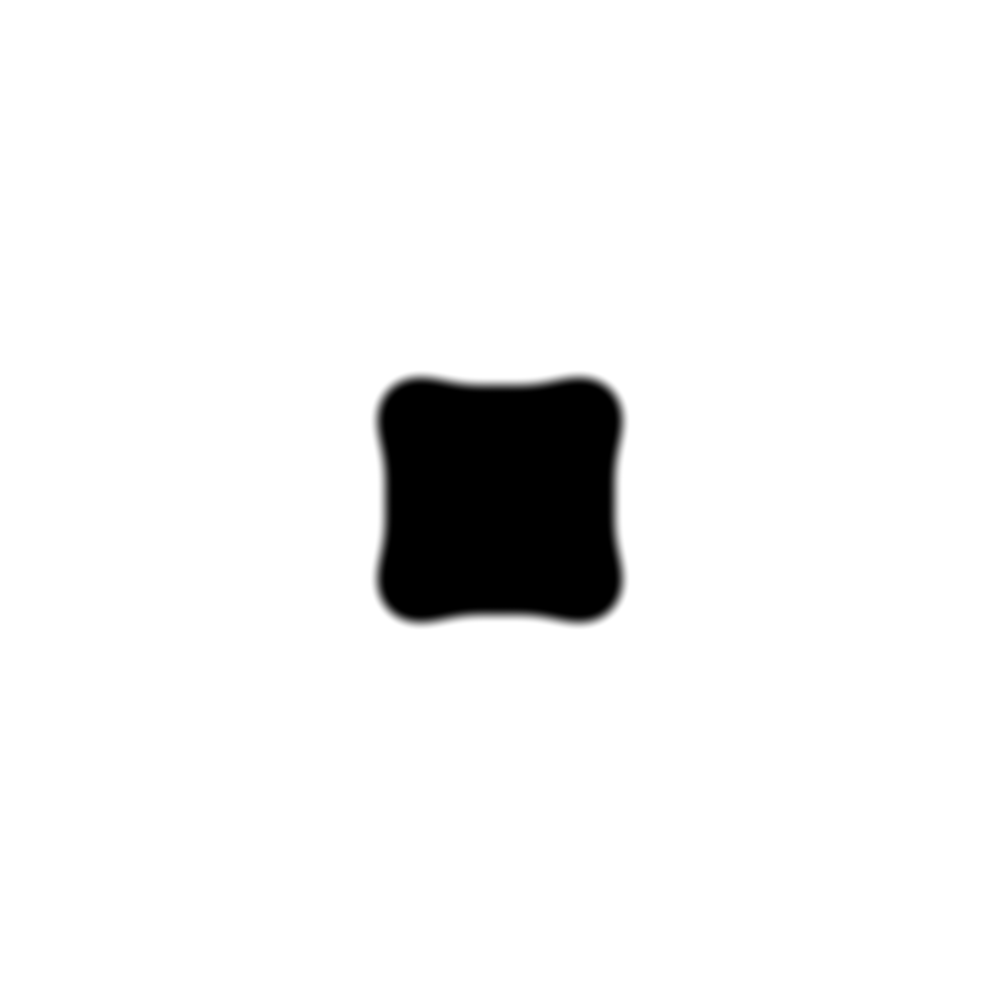}
  }
  \fbox{
  \includegraphics[width=0.15\textwidth]{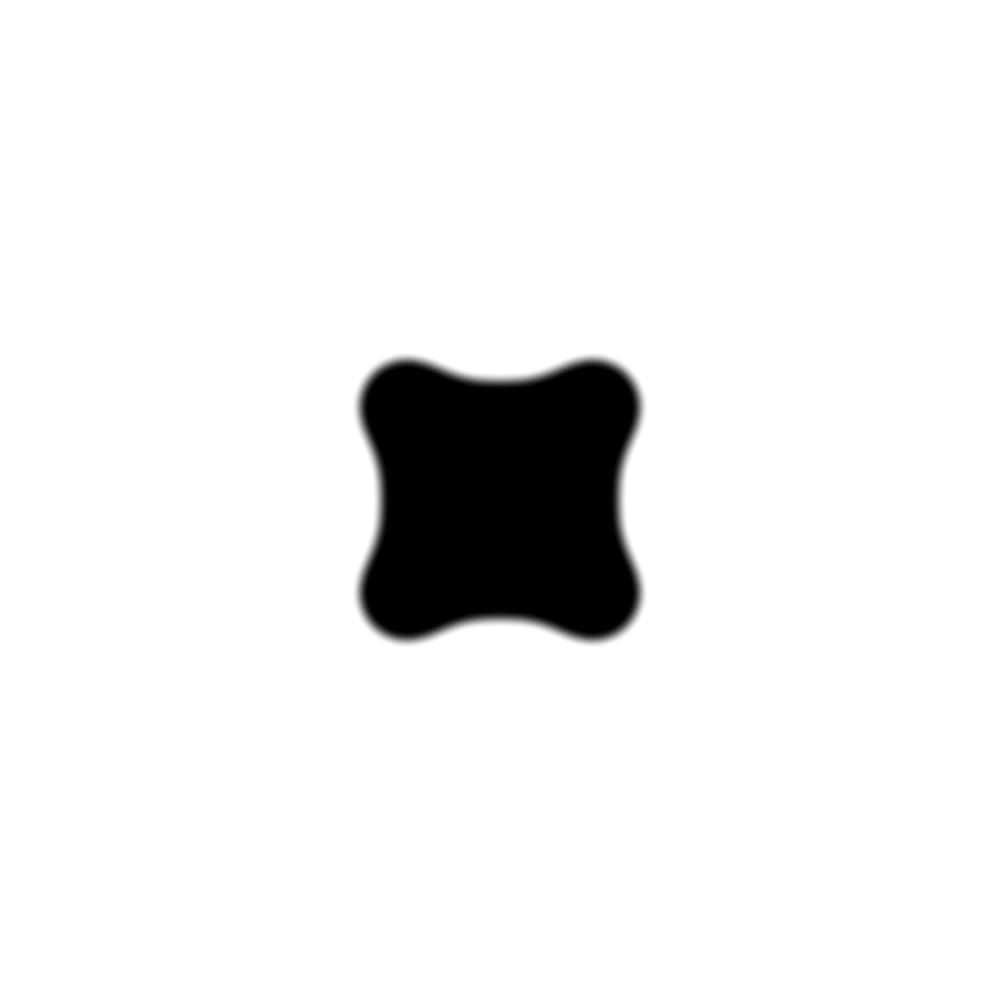}
  }

  \fbox{
  \includegraphics[width=0.15\textwidth]{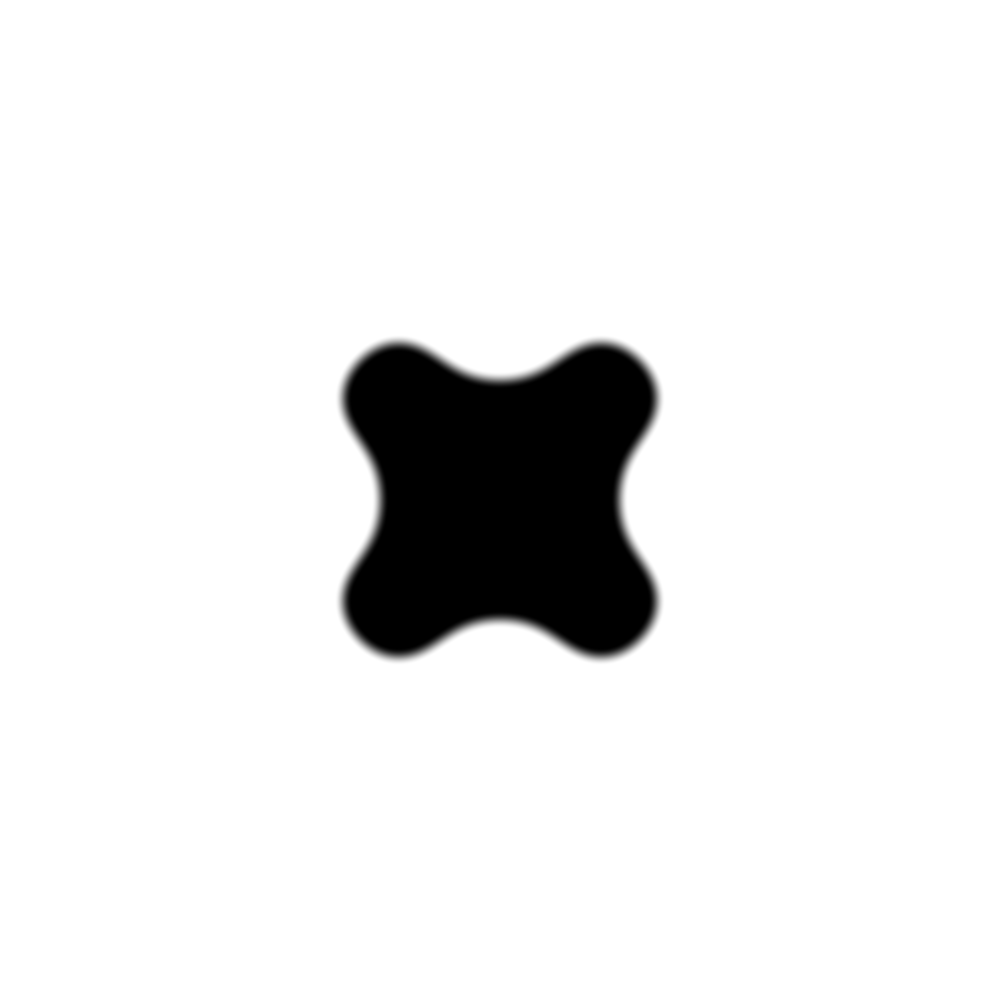}
  }
  \fbox{
  \includegraphics[width=0.15\textwidth]{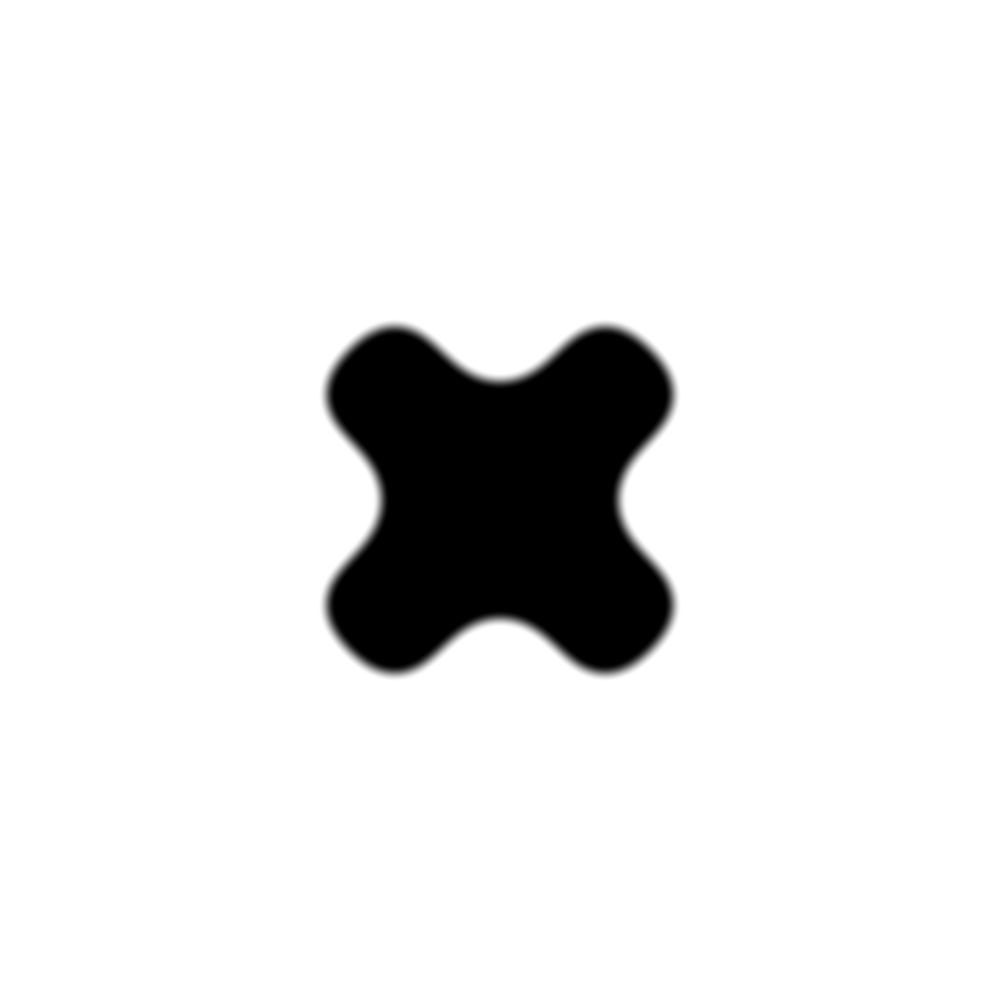}
  }
  \fbox{
  \includegraphics[width=0.15\textwidth]{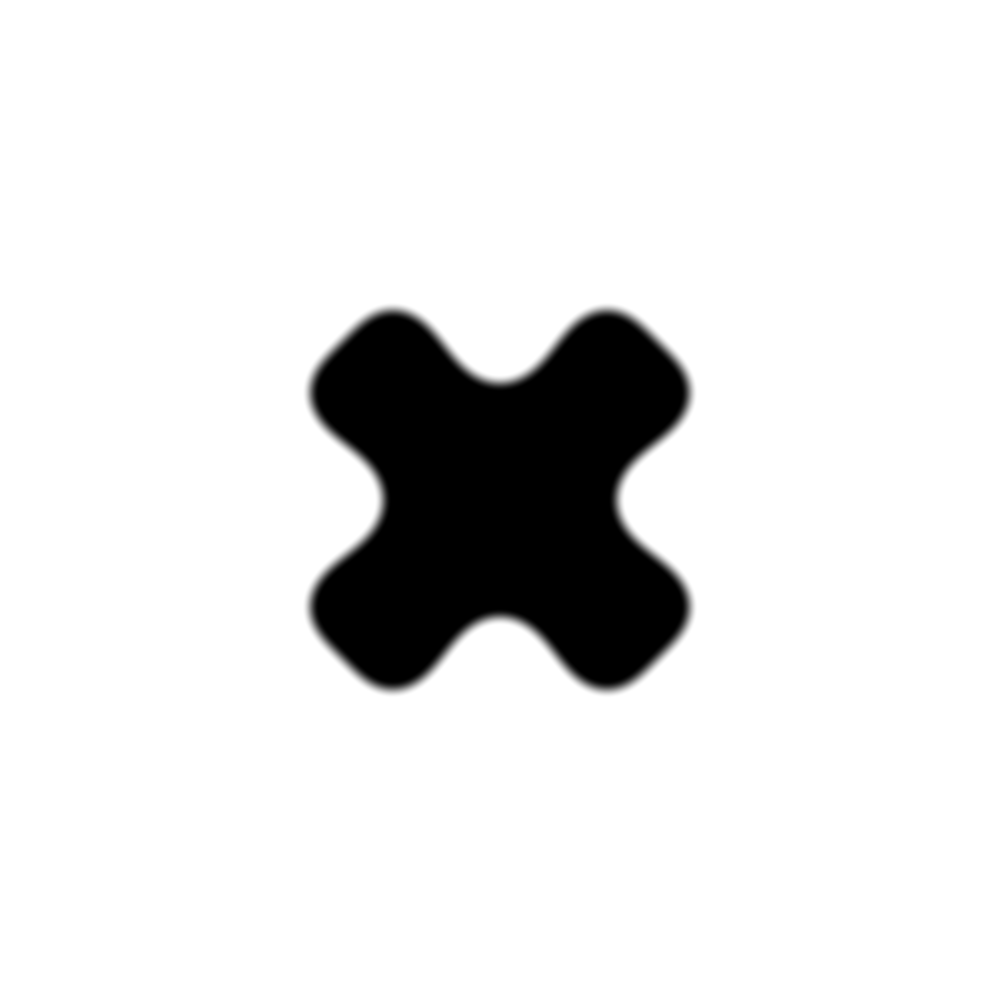}
  }

  \fbox{
  \includegraphics[width=0.15\textwidth]{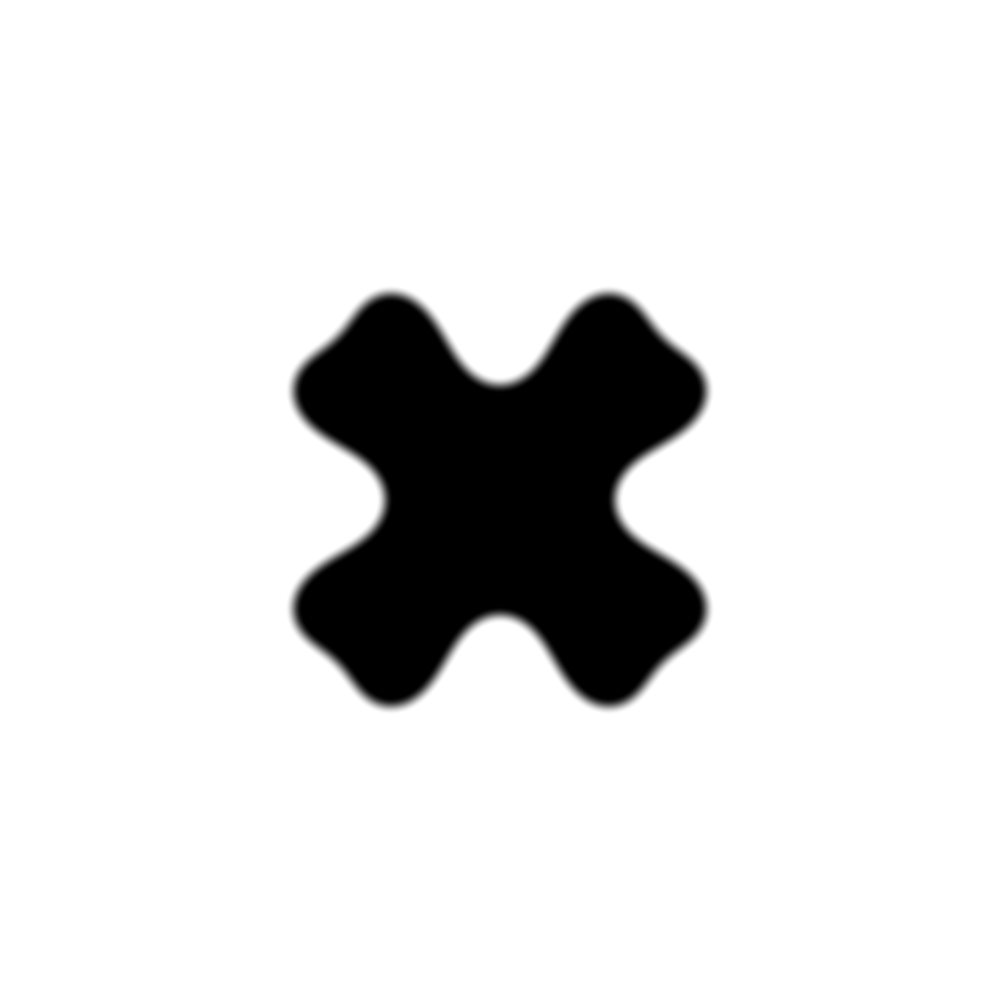}
  }
  \fbox{
  \includegraphics[width=0.15\textwidth]{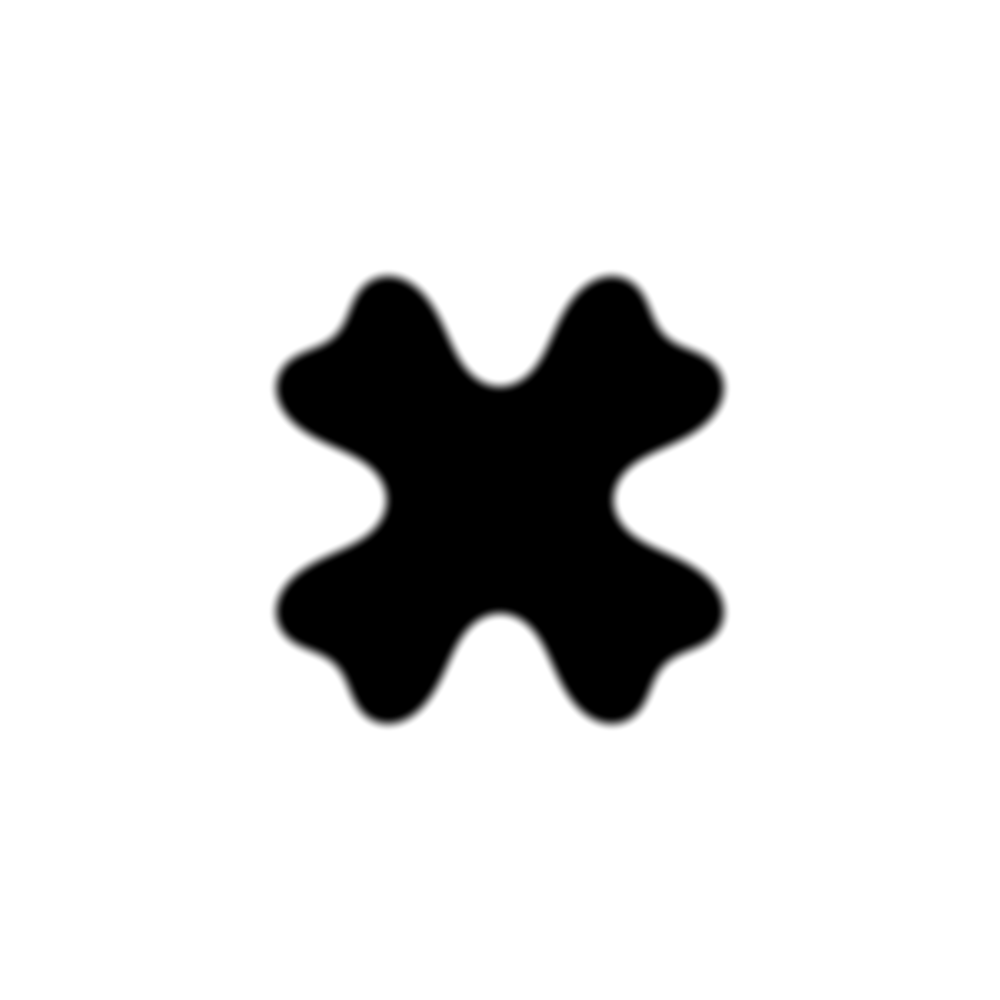}
  }
  \fbox{
  \includegraphics[width=0.15\textwidth]{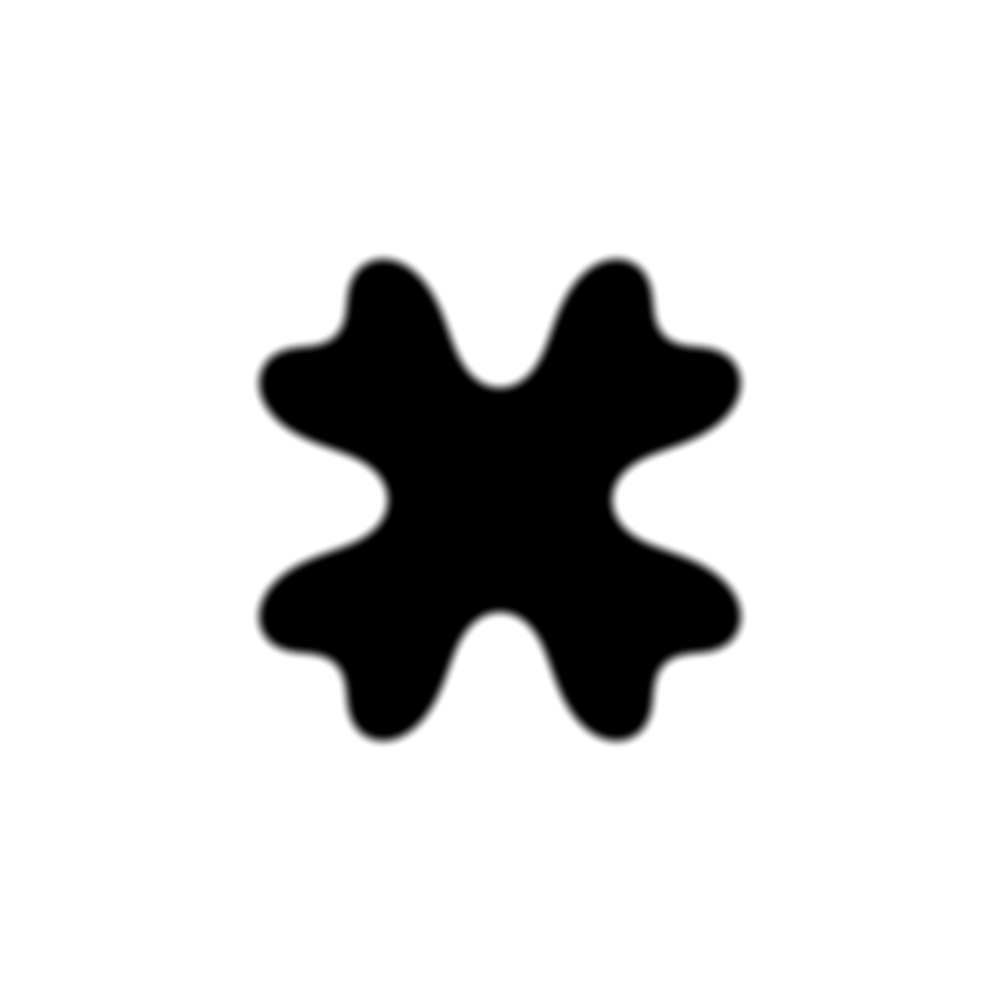}
  }

  \caption{
  Evolution of $\varphi_Q$ at 
  $t \in \{0.0, 1.0, 2.0, 3.0, 4.0, 5.0, 6.0, 7.0, 8.0\}$
  (top left to bottom right). We set $\varphi_\Omega := \varphi_Q(t = 8.0)$.
  }
  \label{fig:N:cd}
\end{figure}

\subsection{Recovery without noise}
\label{ssc:N:noNoise}
As a first test example let us consider the identification of parameters in the absence of noise,
i.e., $\delta = 0$.
We set $\PP_d =7$, $\chi_d = 6$, $\CC_d = 2$ and choose $\beta_{\PP} \equiv \beta_{\chi} \equiv \beta_{\CC}  = 10^{-8}$. 
Furthermore we set $\PP_{\infty} \equiv \chi_{\infty} \equiv \CC_{\infty} = 10$, and 
we initialize the iterative procedure with $\PP_0 \equiv \chi_0 \equiv \CC_0 = 0$.

 In Figure~\ref{fig:N:noNoise:PchiC} we show the evolution of $\PP$, $\chi$, and $\CC$ over the
 optimization steps for $\beta_Q = 1, \beta_\Omega = 0$ (left)
  and $\beta_\Omega = 1$ and $\beta_Q = 0$ (right). 
  On the left we observe a rapid increase of $\CC$ at the very beginning that is stopped by the bound
$\CC \leq \CC_{\infty} = 10$ at iteration 5, before its value is reduced again to the final value
$\CC^Q_* = 2.0003$ after 20 iterations. The increase and decrease is limited by the trust
region radius $\Delta =2$. Note that large values of $\CC$ generate large variations of $\sigma$
and contributes to a strong chemotaxis effect even when the value of $\chi$ is small. This might be the reason why $\chi$ is rather slow at increasing compared to the other parameters throughout the optimization.
The final values $\PP$ and $\chi$ are $\PP^Q_* = 7.0004$ and $\chi^Q_* = 5.9996$.
Note that the exact minimum is attained at $\PP = 7$, $\chi = 6$, $\CC = 2$.

On the right we have a similar behaviour, but now $\chi$ is increasing at the beginning of the
optimization, while $\PP$ and $\CC$ are approaching their final values quite monotonically. The
final values are $\PP^\Omega_* = 6.9987$, $\chi^\Omega_* = 6.0008$, and $\CC^\Omega_*
= 1.9990$.

Summarizing, we are able to recover the parameters of interest in the absence of noise in the data. 
Let us further point out that the choice $\beta_{\PP} = \beta_{\chi} = \beta_{\CC} = 10^{-8}$ 
implies we do not put significant weighting on the a priori 
knowledge $\PP_{d}, \chi_{d}, \CC_{d}$ for the recovery of parameters.  
Hence, any pollution in the form of errors in the a priori information 
has minimal influence in the parameter estimation, and we have
 observed similar final values for $\PP, \chi, \CC$ when we set $\PP_{d} = \chi_{d} = \CC_{d} = 0$.

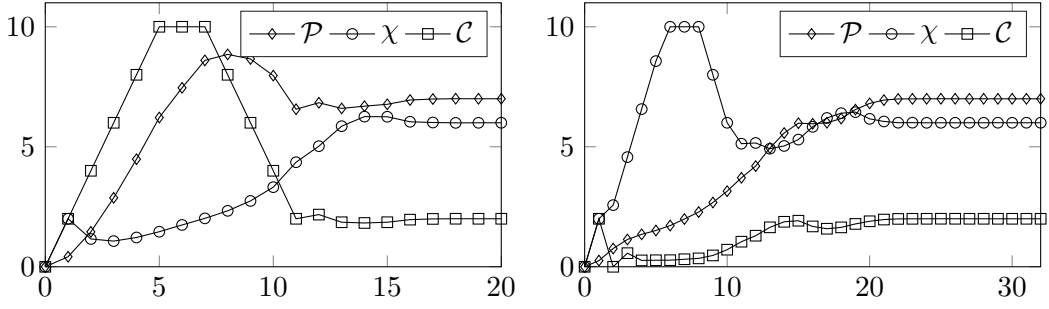
\begin{figure}
  \centering
%
%
\begin{tikzpicture}

\begin{axis}[%
width=0.4\textwidth,
height=3.5cm,
at={(0.0,0.0)},
scale only axis,
xmin=0,
xmax=20,
ymin=0,
ymax=11,
axis background/.style={fill=white},
legend style={legend cell align=left,align=left,draw=white!15!black, legend columns=-1},
legend pos = north east
]
\addplot [color=black,solid,mark=diamond,mark options={solid}]
  table[row sep=crcr]{%
0	0\\
1	0.41763217\\
2	1.4540706\\
3	2.87314064\\
4	4.4834656\\
5	6.21785996\\
6	7.45594378\\
7	8.60001324\\
8	8.84961997\\
9	8.65786455\\
10	7.96479554\\
11	6.56488567\\
12	6.83416235\\
13	6.59844437\\
14	6.69241102\\
15	6.77084711\\
16	6.94847405\\
17	6.99002385\\
18	7.00316543\\
19	7.00037638\\
20	7.00039231\\
};
\addlegendentry{$\PP$};

\addplot [color=black,solid,mark=o,mark options={solid}]
  table[row sep=crcr]{%
0	0\\
1	2\\
2	1.15845564\\
3	1.06952832\\
4	1.22044531\\
5	1.46031404\\
6	1.74241784\\
7	2.01663383\\
8	2.33082074\\
9	2.74356593\\
10	3.317049\\
11	4.35648215\\
12	5.02518114\\
13	5.85776983\\
14	6.25605868\\
15	6.2511874\\
16	6.04187109\\
17	6.01113696\\
18	5.99628182\\
19	5.99961338\\
20	5.99957452\\
};
\addlegendentry{$\chi$};

\addplot [color=black,solid,mark=square,mark options={solid}]
  table[row sep=crcr]{%
0	0\\
1	2\\
2	4\\
3	6\\
4	8\\
5	10\\
6	10\\
7	10\\
8	8\\
9	6\\
10	4\\
11	2\\
12	2.17392148\\
13	1.85558896\\
14	1.82479633\\
15	1.85585166\\
16	1.96349003\\
17	1.99382306\\
18	2.00186317\\
19	2.0002329\\
20	2.00030052\\
};
\addlegendentry{$\CC$};

\end{axis}
\end{tikzpicture}%
%
%
\begin{tikzpicture}

\begin{axis}[%
width=0.4\textwidth,
height=3.5cm,
at={(0.0,0.0)},
scale only axis,
xmin=0,
xmax=32,
ymin=0,
ymax=11,
axis background/.style={fill=white},
legend style={legend cell align=left,align=left,draw=white!15!black, legend columns=-1},
legend pos = north east
]
\addplot [color=black,solid,mark=diamond,mark options={solid}]
  table[row sep=crcr]{%
0	0\\
1	0.2564515\\
2	0.76025424\\
3	1.13631135\\
4	1.35037907\\
5	1.51176238\\
6	1.71687466\\
7	1.98944067\\
8	2.28019121\\
9	2.67831145\\
10	3.1510819\\
11	3.70766815\\
12	4.19604404\\
13	4.94740875\\
14	5.57244218\\
15	5.98326093\\
16	5.95751445\\
17	5.99938581\\
18	6.17572799\\
19	6.54844739\\
20	6.8072003\\
21	6.94555505\\
22	6.99308366\\
23	6.99606004\\
24	6.9993879\\
25	7.0008837\\
26	7.00056698\\
27	6.99833941\\
28	6.99959502\\
29	6.99734957\\
30	6.99888325\\
31	6.99818154\\
32	6.99866724\\
};
\addlegendentry{$\PP$};

\addplot [color=black,solid,mark=o,mark options={solid}]
  table[row sep=crcr]{%
0	0\\
1	2\\
2	2.56961531\\
3	4.56961531\\
4	6.56961531\\
5	8.56961531\\
6	10\\
7	10\\
8	10\\
9	8\\
10	6\\
11	5.12859679\\
12	5.15831826\\
13	4.91995445\\
14	5.03394878\\
15	5.29987625\\
16	5.83107809\\
17	6.19980714\\
18	6.39969795\\
19	6.44372694\\
20	6.16321016\\
21	6.05506421\\
22	6.00330485\\
23	6.00447488\\
24	5.99896435\\
25	5.9997702\\
26	5.99921953\\
27	6.00187477\\
28	5.99959951\\
29	6.00287781\\
30	6.00014407\\
31	6.0016714\\
32	6.00083985\\
};
\addlegendentry{$\chi$};

\addplot [color=black,solid,mark=square,mark options={solid}]
  table[row sep=crcr]{%
0	0\\
1	2\\
2	0\\
3	0.56637018\\
4	0.26518724\\
5	0.27197622\\
6	0.2746711\\
7	0.31300679\\
8	0.34972918\\
9	0.47192735\\
10	0.71129532\\
11	1.04412768\\
12	1.29419927\\
13	1.64322298\\
14	1.87415539\\
15	1.91833734\\
16	1.68036829\\
17	1.58475428\\
18	1.64119446\\
19	1.785182\\
20	1.89602941\\
21	1.96761073\\
22	1.99453982\\
23	1.99826976\\
24	1.99893822\\
25	2.00090862\\
26	2.00018798\\
27	1.99929141\\
28	1.99945249\\
29	1.99879849\\
30	1.99896953\\
31	1.99918573\\
32	1.99901308\\
};
\addlegendentry{$\CC$};

\end{axis}
\end{tikzpicture}%

  \caption{The evolution of the parameters found by the optimizer over the iterations in the absence
  of noise, i.e.,  $\delta = 0$.
  On the left we use $\beta_Q = 1$ and $\beta_\Omega = 0$, and on the right we use $\beta_Q
  = 0$ and $\beta_\Omega = 1$. Note that the upper bound $\CC_\infty = 10$ is attained
  on the left for iterations $5,6,7$ and on the right the bound $\chi_\infty = 10$ is attained  for
  iterations $6,7,8$.
  The final values are $\PP^Q_* = 7.0004$, $\chi^Q_* = 5.9996$, and $\CC^Q_* = 2.0003$ after 20 iterations for the left
  setting, and $\PP^\Omega_* = 6.9987$, $\chi^\Omega_* = 6.0008$, and $\CC^\Omega_* = 1.9990$ after
  32 iterations for the right setting.  
  }
  \label{fig:N:noNoise:PchiC}
\end{figure}

\subsection{Recovery with noise}
Next we consider noisy data obtained from adding noise of maximum value $\delta =
0.05$,  which is $\approx 2.5\%$ to the amplitude of $\varphi$, to the given data $\varphi_Q$ 
as described in Section~\ref{ssec:desiredState}. 
Here we assume that the initial data for the numerical simulation is free of
noise. Otherwise, due to  the proliferation mechanism, the noise acts as seeds
for tumour growing all over the domain. Note that this might be suppressed  by
setting a threshold in $f$, such that proliferation is restricted to regions
with $\varphi > -1+\delta$.

In Figure~\ref{fig:N:noise:PchiC}
we show the evolution of the parameters for $\beta_Q = 1$ and $\beta_\Omega = 0$ on
the left and for $\beta_Q = 0$ and $\beta_\Omega = 1$ on the right. 
In both cases we
observe a similar evolution of the parameters as in the absence of noise, 
i.e., Section~\ref{ssc:N:noNoise}. 
For $\beta_Q = 1$ 
the solver now needs 22 iterations to reach the final values $\PP_*^Q = 7.0005$, $\chi_*^Q =
5.9996$, and $\CC_*^Q = 2.0003$,
while for $\beta_\Omega = 1$ we need 39 iterations to reach 
$\PP_*^\Omega = 6.9977$, $\chi_*^\Omega = 6.0023$, and $\CC_*^\Omega = 1.9988$.
Again the final values are close to the
desired ones $\PP_d = 7$, $\chi_d = 6$, $\CC_d = 2$.

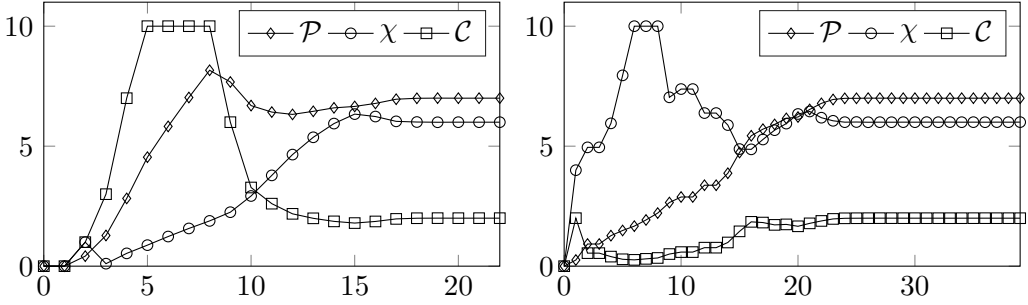
\begin{figure}
  \centering
%
%
\begin{tikzpicture}

\begin{axis}[%
width=0.4\textwidth,
height=3.5cm,
at={(0.0,0.0)},
scale only axis,
xmin=0,
xmax=22,
ymin=0,
ymax=11,
axis background/.style={fill=white},
legend style={legend cell align=left,align=left,draw=white!15!black, legend columns=-1},
legend pos = north east
]
\addplot [color=black,solid,mark=diamond,mark options={solid}]
  table[row sep=crcr]{%
0	0\\
1	0\\
2	0.41748411\\
3	1.2781582\\
4	2.82317227\\
5	4.54194939\\
6	5.82418398\\
7	7.02719843\\
8	8.16348801\\
9	7.67414302\\
10	6.6908963\\
11	6.42068692\\
12	6.32391026\\
13	6.45221232\\
14	6.59699916\\
15	6.65588172\\
16	6.78188009\\
17	6.95684905\\
18	6.99573383\\
19	7.00243944\\
20	6.99965823\\
21	7.00065096\\
22	7.00046778\\
};
\addlegendentry{$\PP$};

\addplot [color=black,solid,mark=o,mark options={solid}]
  table[row sep=crcr]{%
0	0\\
1	0\\
2	1\\
3	0.10357677\\
4	0.53310431\\
5	0.87616031\\
6	1.23460512\\
7	1.57184417\\
8	1.88581437\\
9	2.2488743\\
10	2.92399662\\
11	3.78245916\\
12	4.64938861\\
13	5.36976477\\
14	5.94155306\\
15	6.33634117\\
16	6.23507296\\
17	6.03442279\\
18	6.00624342\\
19	5.99673899\\
20	6.00047189\\
21	5.99923994\\
22	5.99956493\\
};
\addlegendentry{$\chi$};

\addplot [color=black,solid,mark=square,mark options={solid}]
  table[row sep=crcr]{%
0	0\\
1	0\\
2	1\\
3	3\\
4	7\\
5	10\\
6	10\\
7	10\\
8	10\\
9	6\\
10	3.27793936\\
11	2.60443936\\
12	2.18081017\\
13	1.99378869\\
14	1.86641082\\
15	1.79576675\\
16	1.86306585\\
17	1.96926502\\
18	1.99789144\\
19	2.00120473\\
20	1.99981101\\
21	2.00042861\\
22	2.00031386\\
};
\addlegendentry{$\CC$};

\end{axis}
\end{tikzpicture}%
%
%
\begin{tikzpicture}

\begin{axis}[%
width=0.4\textwidth,
height=3.5cm,
at={(0.0,0.0)},
scale only axis,
xmin=0,
xmax=39,
ymin=0,
ymax=11,
axis background/.style={fill=white},
legend style={legend cell align=left,align=left,draw=white!15!black, legend columns=-1},
legend pos = north east
]
\addplot [color=black,solid,mark=diamond,mark options={solid}]
  table[row sep=crcr]{%
0	0\\
1	0.256281\\
2	0.92300146\\
3	0.92300146\\
4	1.27115215\\
5	1.48684021\\
6	1.66792546\\
7	1.92517336\\
8	2.21200193\\
9	2.65159209\\
10	2.88245522\\
11	2.88245522\\
12	3.37264462\\
13	3.37264462\\
14	3.87264462\\
15	4.7365226\\
16	5.43383713\\
17	5.70929683\\
18	5.9067757\\
19	6.15808813\\
20	6.21453836\\
21	6.5315182\\
22	6.78391396\\
23	6.94552339\\
24	6.99419445\\
25	6.99498495\\
26	7.00095575\\
27	7.00049768\\
28	7.00139456\\
29	6.99937355\\
30	6.99977818\\
31	6.99810101\\
32	6.99939907\\
33	6.99741917\\
34	6.99967477\\
35	6.99962241\\
36	7.00057496\\
37	6.99848029\\
38	6.99942003\\
39	6.99773169\\
};
\addlegendentry{$\PP$};

\addplot [color=black,solid,mark=o,mark options={solid}]
  table[row sep=crcr]{%
0	0\\
1	4\\
2	4.95080969\\
3	4.95080969\\
4	5.95080969\\
5	7.95080969\\
6	10\\
7	10\\
8	10\\
9	7.03543064\\
10	7.37957484\\
11	7.37957484\\
12	6.37957484\\
13	6.37957484\\
14	5.87957484\\
15	4.87957484\\
16	4.86765557\\
17	5.27827258\\
18	5.66627558\\
19	5.94077249\\
20	6.3411478\\
21	6.44588385\\
22	6.18634579\\
23	6.0574217\\
24	6.00027588\\
25	6.00639204\\
26	5.99733431\\
27	6.00057875\\
28	5.99834254\\
29	6.00104667\\
30	5.9997334\\
31	6.002082\\
32	5.99948852\\
33	6.00255127\\
34	5.99944234\\
35	6.00114526\\
36	5.99910895\\
37	6.00200894\\
38	6.00006913\\
39	6.00234968\\
};
\addlegendentry{$\chi$};

\addplot [color=black,solid,mark=square,mark options={solid}]
  table[row sep=crcr]{%
0	0\\
1	2\\
2	0.54292874\\
3	0.54292874\\
4	0.39820655\\
5	0.28706411\\
6	0.26277524\\
7	0.30464641\\
8	0.34153123\\
9	0.50492488\\
10	0.58671584\\
11	0.58671584\\
12	0.77056514\\
13	0.77056514\\
14	0.98676547\\
15	1.45111591\\
16	1.84676253\\
17	1.81705299\\
18	1.72040316\\
19	1.74012974\\
20	1.66162945\\
21	1.78376188\\
22	1.88442136\\
23	1.96833839\\
24	1.99478216\\
25	1.99789833\\
26	1.99963983\\
27	2.00078658\\
28	2.00051714\\
29	1.99982197\\
30	1.99956468\\
31	1.99928102\\
32	1.99932779\\
33	1.99889228\\
34	1.99938947\\
35	2.00004888\\
36	2.00005071\\
37	1.9993982\\
38	1.99941087\\
39	1.99875972\\
};
\addlegendentry{$\CC$};

\end{axis}
\end{tikzpicture}%

  \caption{The evolution of the parameters found by the optimizer over the iterations performed
  for $\delta = 0.05$.
 On the left we use $\beta_Q = 1$ and $\beta_\Omega = 0$, and on the right we use $\beta_Q
  = 0$ and $\beta_\Omega = 1$. The final values are
$\PP_*^Q = 7.0005$, $\chi_*^Q = 5.9996$, and $\CC_*^Q = 2.0003$ after 22 iterations
(left) and $\PP_*^\Omega = 6.9977$, $\chi_*^\Omega = 6.0023$, and $\CC_*^\Omega = 1.9988$
after 39 iterations (right).
  }
  \label{fig:N:noise:PchiC}  
\end{figure}

We also investigate the robustness of the parameter identification with respect to the level of noise, and the results are summarized in Table~\ref{tab:N:noise:severalDelta}. Here we
again use $\beta_Q = 1$ and $\beta_\Omega = 0$.
We identify the unknown parameters up to a noise level of
$\delta = 0.35$. This means that we vary the value of $\varphi$
point wise by up to 17.5\%. In our example, for $\delta = 0.35$ the
linearization close to optimum becomes a bad model for the actual equation and
the trust region method breaks down with the values shown in
Table~\ref{tab:N:noise:severalDelta}. We see that the algorithm is rather
robust with respect to size of $\delta$ and the number of iterations to find the optimal
values.  Furthermore, the optimal value of $J$ is increasing with the noise level as
expected.

\begin{table}
\centering
\begin{tabular}{c|ccc|ccc}
$\delta$ & $\PP_*^Q$ & $\chi_*^Q$ & $\CC_*^Q$ & \#it & $J$  \\
\hline
0.10&7.0006&5.9994&2.0004&20&0.66826\\
0.15&6.9994&6.0002&1.9997&23&1.4994\\
0.20&6.9984&6.0013&1.9988&27&2.6625\\
0.30&7.0002&5.9999&2.0003&22&5.9992\\
0.35&7.0014&5.9986&2.0007&28&8.1671
\end{tabular}
\caption{The identified parameter for several levels of noise $\delta$.
We also show the number of trust-region-Gauss-Newton steps (\#it) and the value
$J(\PP_*^Q,\chi_*^Q,\CC_*^Q)$. }
\label{tab:N:noise:severalDelta}
\end{table}

\newpage
%
%
%


\begin{thebibliography}{10}

\bibitem{Allmaras}
M.~Allmaras, W.~Bangerth, J.M. Linhart, J.~Polanco, F.~Wang, K.~Wang,
  J.~Webster, and S.~Zedler.
\newblock {Estimating Parameters in Physical Models through Bayesian Inversion:
  A Complete Example}.
\newblock {\em SIAM Rev.}, 55(1):149--167, 2013.

\bibitem{BBV}
R.~Becker, M.~Braack, and B.~Vexler.
\newblock {Numerical parameter estimation for chemical models in
  multidimensional reactive flows}.
\newblock {\em Combust. Theory Model.}, 8(4):661--682, 2004.

\bibitem{Blank}
L.~Blank, M.~Butz, and H.~Garcke.
\newblock {Solving the Cahn--Hilliard variational inequality with a semi-smooth
  Newton method}.
\newblock {\em ESAIM: Control Optim. Calc. Var.}, 17(4):931--954, 2011.

\bibitem{BloweyElliott}
J.F. Blowey and C.M. Elliott.
\newblock {The Cahn--Hilliard gradient theory for phase separation with
  non-smooth free energy. Part I: Mathematical analysis}.
\newblock {\em European J. Appl. Math.}, 2(3):233--280, 1991.

\bibitem{BG}
H.~Br\'{e}zis and T.~Gallouet.
\newblock {Nonlinear Schr\"{o}dinger evolution equations}.
\newblock {\em Nonlinear Anal.}, 4(4):677--681, 1980.

\bibitem{Madzvamuse}
E.~Campillo-Funollet, C.~Venkataraman, and A.~Madzvamuse.
\newblock {A Bayesian approach to parameter identification with an application
  to Turing systems}.
\newblock Preprint arXiv:1605.04718, 2016.

\bibitem{Colli:tumour}
P.~Colli, G.~Gilardi, E.~Rocca, and J.~Sprekels.
\newblock {Optimal distributed control of a diffuse interface model of tumor
  growth}.
\newblock {\em Nonlinearity}, 30:2518--2546, 2017.

\bibitem{Collis}
J.~Collis, A.J. Connor, M.~Paczkowski, P.~Kannan, J.~Pitt-Francis, H.M. Byrne,
  and M.E. Hubbard.
\newblock {Bayesian Calibration, Validation and Uncertainty Quantification for
  Predictive Modelling of Tumour Growth: A Tutorial}.
\newblock {\em Bull. Math. Biol.}, 79(4):939--974, 2017.

\bibitem{CLLW}
V.~Cristini, X.~Li, J.~S. Lowengrub, and S.~M. Wise.
\newblock {Nonlinear simulations of solid tumor growth using a mixture model:
  invasion and branching}.
\newblock {\em J. Math. Biol.}, 58(4-5):723--763, 2009.

\bibitem{CLNie}
V.~Cristini, J.~Lowengrub, and Q.~Nie.
\newblock {Nonlinear simulations of tumor growth}.
\newblock {\em J. Math. Biol.}, 46(3):191--224, 2003.

\bibitem{Doerfler}
W.~D\"{o}rfler.
\newblock {A convergent adaptive algorithm for Poisson's equation}.
\newblock {\em SIAM J. Numer. Anal.}, 33(3):1106--1124, 1996.

\bibitem{Engler}
H.~Engler.
\newblock {An alternative proof of the Brezis-Wainger inequality}.
\newblock {\em Comm. Partial Differential Equations}, 14(4):541--544, 1989.

\bibitem{GarHK_CHNS_AGG_linearStableTimeDisc}
H.~Garcke, M.~Hinze, and C.~Kahle.
\newblock {A stable and linear time discretization for a thermodynamically
  consistent model for two-phase incompressible flow}.
\newblock {\em Appl. Numer. Math.}, 99:151--171, 2016.

\bibitem{GLDiri}
H.~Garcke and K.F. Lam.
\newblock {Analysis of a Cahn-Hilliard system with non-zero Dirichlet
  conditions modeling tumor growth with chemotaxis}.
\newblock {\em Discrete Contin. Dyn. Syst.}, 37(8):4277--4308, 2017.

\bibitem{GLNeu}
H.~Garcke and K.F. Lam.
\newblock {Well-posedness of a Cahn--Hilliard system modelling tumour growth
  with chemotaxis and active transport}.
\newblock {\em European. J. Appl. Math.}, 28(2):284--316, 2017.

\bibitem{GLNS}
H.~Garcke, K.F. Lam, R.~N\"{u}rnberg, and E.~Sitka.
\newblock {A multiphase Cahn--Hilliard--Darcy model for tumour growth with
  necrosis}.
\newblock Preprint arXiv:1701.06656, 2017.

\bibitem{GLR:Opt}
H.~Garcke, K.F. Lam, and E.~Rocca.
\newblock {Optimal control of treatment time in a diffuse interface model for
  tumour growth}.
\newblock Preprint arXiv:1608.00488, To appear in \emph{Appl. Math. Optim.}
  DOI:10.1007/s00245-017-9414-4, 2017.

\bibitem{GLSS}
H.~Garcke, K.F. Lam, E.~Sitka, and V.~Styles.
\newblock {A Cahn--Hilliard--Darcy model for tumour growth with chemotaxis and
  active transport}.
\newblock {\em Math. Models Methods Appl. Sci.}, 26(6):1095--1148, 2016.

\bibitem{Grisvard}
P.~Grisvard.
\newblock {\em {Elliptic problems in nonsmooth domains}}.
\newblock Vol. 69 of Classics in Applied Mathematics. SIAM, 2011.

\bibitem{Hawkins}
A.~Hawkins-Daarud, S.~Prudhomme, K.G. van~der Zee, and J.T. Oden.
\newblock {Bayesian calibration, validation, and uncertainty quantification of
  diffuse interface models of tumour growth}.
\newblock {\em J. Math. Biol.}, 67(6-7):1457--1485, 2013.

\bibitem{HHK}
M.~Hinterm\"{u}ller, M.~Hinze, and C.~Kahle.
\newblock {An adaptive finite element Moreau--Yosida-based solver for a coupled
  Cahn--Hilliard/Navier--Stokes system}.
\newblock {\em J. Comput. Phys.}, 235:810--827, 2013.

\bibitem{HHT}
M.~Hinterm\"{u}ller, M.~Hinze, and M.H. Tber.
\newblock {An adaptive finite element Moreau--Yosida-based solver for a
  non-smooth Cahn--Hilliard problem}.
\newblock {\em Optim. Methods Softw.}, 25(4-5):777--811, 2011.

\bibitem{HK}
M.~Hinterm\"{u}ller and I.~Kopacka.
\newblock {A smooth penalty approach and a nonlinear multigrid algorithm for
  elliptic MPECs}.
\newblock {\em Comput. Optim. Appl.}, 50(1):111--145, 2011.

\bibitem{HPUU}
M.~Hinze, R.~Pinnau, M.~Ulbrich, and S.~Ulbrich.
\newblock {\em {Optimization with PDE Constraints}}.
\newblock Mathematical Modelling: Theory and Applications. Springer
  Netherlands, 2009.

\bibitem{LamWu}
K.F. Lam and H.~Wu.
\newblock {Thermodynamically consistent Navier--Stokes--Cahn--Hilliard models
  with mass transfer and chemotaxis}.
\newblock Preprint arXiv:1702.06014, 2017.

\bibitem{Lima2}
E.A.B.F. Lima, J.T. Oden, D.A. Hormuth~II, T.E. Yankeelov, and R.C. Almeida.
\newblock {Selection, calibration, and validation of models of tumor growth}.
\newblock {\em Math. Models Methods Appl. Sci.}, 26:2341--2368, 2016.

\bibitem{Lima}
E.A.B.F. Lima, J.T. Oden, B.~Wohlmuth, A.~Shamoradi, D.A. Hormuth~II, T.E.
  Yankeelov, L.~Scarabosio, and T.~Horger.
\newblock {Selection and Validation of Predictive Models of Radiation Effects
  on Tumor Growth Based on Noninvasive Imaging Data}.
\newblock ICES report 17-14
  \url{https://www.ices.utexas.edu/media/reports/2017/1714.pdf}, 2017.

\bibitem{fenics_book}
A.~Logg, K.-A. Mardal, and G.~Wells.
\newblock {\em {Automated Solution of Differential Equations by the Finite
  Element Method - The FEniCS Book}}, volume~84 of {\em Lecture Notes in
  Computational Science and Engineering}.
\newblock Springer--Verlag Berlin Heidelberg, 2012.

\bibitem{NocedalWright}
J.~Nocedal and S.J. Wright.
\newblock {\em Numerical Optimization}.
\newblock Springer Series in Operation Research and Financial Engineering.
  Springer--Verlag New York, 2006.

\bibitem{Pennacchietti}
S.~Pennacchietti, P.~Michieli, M.~Galluzzo, M.~Mazzone, S.~Giordano, and P.~M.
  Comoglio.
\newblock {Hypoxia promotes invasive growth by transcriptional activation of
  the {\it met} protooncogene}.
\newblock {\em Cancer Cell}, 3(4):347--361, 2003.

\bibitem{alberta_book}
A.~Schmidt and K.G. Siebert.
\newblock {\em {Design of adaptive finite element software: The finite element
  toolbox ALBERTA}}, volume~42 of {\em Lecture Notes in Computational Science
  and Engineering}.
\newblock Springer--Verlag Berlin Heidelberg, 2005.

\bibitem{Stuart_Bayes}
A.M. Stuart.
\newblock {Inverse problems: A Bayesian perspective}.
\newblock {\em Acta Numerica}, 19:451--559, 2010.

\bibitem{Troltzsch}
F.~Tr{\"o}ltzsch.
\newblock {\em {Optimal Control of Partial Differential Equations: Theory,
  Methods, and Applications}}.
\newblock Graduate studies in mathematics. AMS, Providence, RI, 2010.

\bibitem{Verfuerth}
R.~Verf\"{u}rth.
\newblock {\em {A review of a posteriori error estimation and adaptive
  mesh-refinement techniques}}.
\newblock Wiley-Teubner series: Advances in Numerical Mathematics.
  Wiley-Teubner, New York, 1996.

\end{thebibliography}
\end{document}